    \tikzset{smalltext/.style={"\textup{\small #1}" description}}
\def\namedlabel#1#2{\begingroup
#2%
\def\@currentlabel{#2}
\phantomsection\label{#1}\endgroup
}
\newtheorem{thm}{Theorem}[section]
\newtheorem{prop}[thm]{Proposition}
\newtheorem{lem}[thm]{Lemma}
\newtheorem{cor}[thm]{Corollary}
\theoremstyle{definition}
\newtheorem{defn}[thm]{Definition}
\newtheorem{defnex}[thm]{Definition/Example}
\newtheorem{construct}[thm]{Construction}
\newtheorem{ques}[thm]{Question}
\newtheorem{prob}[thm]{Problem}
\newtheorem{ex}[thm]{Example}
\newtheorem{rem}[thm]{Remark}
\numberwithin{equation}{section}
\newcommand*{\defeq}{\mathrel{\vcenter{\baselineskip0.5ex \lineskiplimit0pt
\hbox{\scriptsize.}\hbox{\scriptsize.}}}%
=}
\def\moverlay{\mathpalette\mov@rlay}
\def\mov@rlay#1#2{\leavevmode\vtop{%
\baselineskip\z@skip \lineskiplimit-\maxdimen
\ialign{\hfil$\m@th#1##$\hfil\cr#2\crcr}}}
\newcommand{\charfusion}[3][\mathord]{
#1{\ifx#1\mathop\vphantom{#2}\fi
\mathpalette\mov@rlay{#2\cr#3}
}
\ifx#1\mathop\expandafter\displaylimits\fi}
\def\subsubsection{\@startsection{subsubsection}{3}%
\z@{.5\linespacing\@plus.7\linespacing}{-.5em}%
{\normalfont\bfseries}}
\def\mm{\mid\mid}
\def\Z{{\mathbb Z}}
\def\R{{\mathbb R}}
\def\fS{{\mathfrak S}}
\def\fG{{\mathfrak G}}
\def\ds{\displaystyle}
\newcommand{\bm}[1]{{\boldsymbol{#1}}}
\def\0{\bm{0}}
\def\a{\bm{a}}
\def\e{\bm{e}}
\def\f{\bm{f}}
\def\r{\bm{r}}
\def\v{\bm{v}}
\def\V{\bm{V}}
\def\u{\bm{u}}
\def\w{\bm{w}}
\def\x{\bm{x}}
\def\y{\bm{y}}
\def\z{\bm{z}}
\def\balpha{\bm{\alpha}}
\def\bgamma{\bm{\gamma}}
\def\bdelta{\bm{\delta}}
\newcommand\preceqdot{\mathrel{\ooalign{$\preceq$\cr
\hidewidth\raise0.225ex\hbox{$\cdot\mkern0.5mu$}\cr}}}
\definecolor{amber}{rgb}{1.0, 0.75, 0.0}
\def\cA{\mathcal A}
\def\cB{\mathcal B}
\def\cD{\mathcal D}
\def\cF{{\mathcal F}}
\def\cG{\mathcal G}
\def\cH{\mathcal H}
\def\cO{\mathcal O}
\def\cR{\mathcal R}
\def\cS{\mathcal S}
\def\cT{\mathcal T}
\def\cZ{\mathcal Z}
\def\ffan{\mathrm{FFan}}
\def\carr{\kappa}
\newcommand{\ee}{\textbf{e}}
\def\sO{\mathscr O}
\def\fS{ {\mathfrak{S}}}
\def\fG{{\mathfrak{G}}}
\def\Gl{\mathfrak{Gl}}
\def\KK{{\mathscr K}}
\def\FL{\mathcal{F}}
\def\ncone{\operatorname{ncone}}
\def\conv{\operatorname{Conv}}
\def\cone{\operatorname{Cone}}
\def\std{\mathrm{std}}
\newcommand{\1}{\mathbf{1}}
\def\Type{ {\operatorname{Type}}}
\def\ncone{\operatorname{ncone}}
\def\1{ {\bf{1}}}
\numberwithin{equation}{section}
\definecolor{darkgreen}{rgb}{0,0.7,0}
\definecolor{brown}{rgb}{0.7,0.4,0}
\newcommand\commentout[1]{}
\author{Federico Castillo}
\address{Federico Castillo, Departamento de Matem\'aticas, Pontificia Universidad Cat\'olica de Chile, Santiago, Chile.}
\email{federico.castillo@mat.uc.cl}
\author{Fu Liu}
\address{Fu Liu, Department of Mathematics, University of California, Davis, One Shields Avenue, Davis, CA 95616 USA.}
\email{fuliu@ucdavis.edu}
\keywords{symmetrization, reflection groups, geometric realization}
\begin{document}
\title{Symmetrizing polytopes and posets}

\begin{abstract}
  Motivated by the authors' work on permuto-associahedra, which can be considered as a symmetrization of the associahedron using the symmetric group, we introduce and study the $\fG$-symmetrization of an arbitrary polytope $P$ for any reflection group $\fG$. 
	We show that the combinatorics, and moreover, the normal fan of such a symmetrization can be recovered from its refined fundamental fan, a decorated poset describing how the normal fan of $P$ subdivides the fundamental chamber associated to the reflection group $\fG$.

  One important application of our results is providing a way to approach the realization problem of a $\fG$-symmetric poset $\cF$, that is, the problem of constructing a polytope whose face poset is $\cF$. 
  Instead of working with the original poset $\cF,$ we look at its dual poset $\cT$ (which is $\fG$-symmetric as well) and focus on a generating subposet $\cZ$ of $\cT$, and reduce the problem to realizing $\cZ$ as a refined fundamental fan.   
\end{abstract}

\maketitle

\section{Introduction}

The five platonic solids have been known for thousands of years.
Ancient Greeks were fascinated by their symmetries, even though the precise notion of symmetry was not defined until much later.
In modern terms we can define the symmetric group of a polytope $P$ as the group of linear transformations $T$ such that $T(P)=P$;
see \cite{computing_symmetry} for a general survey about computing symmetries.
Around a hundred years ago, Schil\"afi studied regular polytopes, higher dimensional analogues of platonic solids, and classified them.
Formally, a regular polytope is a polytope such that its symmetric group acts transitively on its set of flags of faces.
In this paper we construct polytopes satisfying a weaker symmetric property.

\begin{defn}
Let $\fG \subset \mathfrak{Gl}(V)$ be a group of linear transformations.
We define a \emph{$ \fG $-symmetric polytope} to be a polytope $ P\subset V $ such that $ Pg= P $ for all $ g \in \fG $.
Equivalently, a $ \fG $-symmetric polytope is the convex hull of finitely many $\fG$-orbits.
\end{defn}

Polytopes that are $\fG$-symmetric for particular groups $\fG$ have been studied in the literature.
One example are \emph{permutation polytopes} \cite{baumeister2007permutation} where $\fG$ is the symmetric group acting on the set of matrices by matrix multiplication; the quintessential example is the Birkhoff polytope.
Another example is $ \fG=\mathbb{Z}/2\Z$  acting by negation ($ x\mapsto -x $).
In this case a $ \mathbb{Z}/2\Z$-symmetric polytope is known as a \emph{centrally symmetric} polytope and this has been extensively studied in particular with respect to their f-vectors \cite{novik2019tale}.

In this paper we focus on a particular class of groups: \textbf{finite reflection groups}.
By definition, these are finite groups of linear transformations generated by reflections on a vector space.
Examples include the symmetric groups of the platonic solids, and more generally, of regular polytopes.
There is a specific family of examples we are interested in this paper: the symmetric group of the $ (d-1) $-regular simplex which is known as the type-A Coxeter group and it is isomorphic to the symmetric group $\fS_d$. 

In the case where all the vertices of a $ \fG $-symmetric polytope $P$ form a single $\fG$-orbit, $P$ is called a $\fG$\emph{-permutohedron} (some sources called this orbit polytopes, weight polytopes, or $\Phi$-permutohedra when they want to emphasize in the root system rather than the reflection group).
See Figure \ref{fig:pi3} for a three dimensional permutohedron.
This case has been widely studied, see for instance \cite{postnikov2009permutohedra}; in particular, the face poset is well understood.
The general case, with multiple orbits, seems unexplored.
Recent examples of $ \fS_d $-symmetric polytopes are the lineup polytopes of hypersimplices \cite{castillo2021effective}  and of product of simplices \cite{castillo2023lineup}.
In this paper we systematically construct $ \fG $-symmetric polytopes by a process we call \emph{symmetrization}.

\begin{defn}
	\label{defn:symmetrization}
	For any polytope $P\subseteq V$, we define its \emph{$\fG$-symmetrization} to be the polytope
	\begin{equation}	
		\fG(P)=\conv\{\x g~:~g\in\mathfrak{G}, \x\in P\}.
	\end{equation}	
\end{defn}

The combinatorics of the symmetrization are non-trivial.
In Figure \ref{fig:intro} we have symmetrizations of two different triangles.
The one on the left, Figure \ref{fig:works2}, can be obtained from a permutohedron, Figure \ref{fig:pi3}, by a truncation of each vertex.
The one on the right, Figure \ref{fig:fail} cannot be obtained by repeated truncations of faces as it is not a simple polytope.

\begin{figure}[ht]
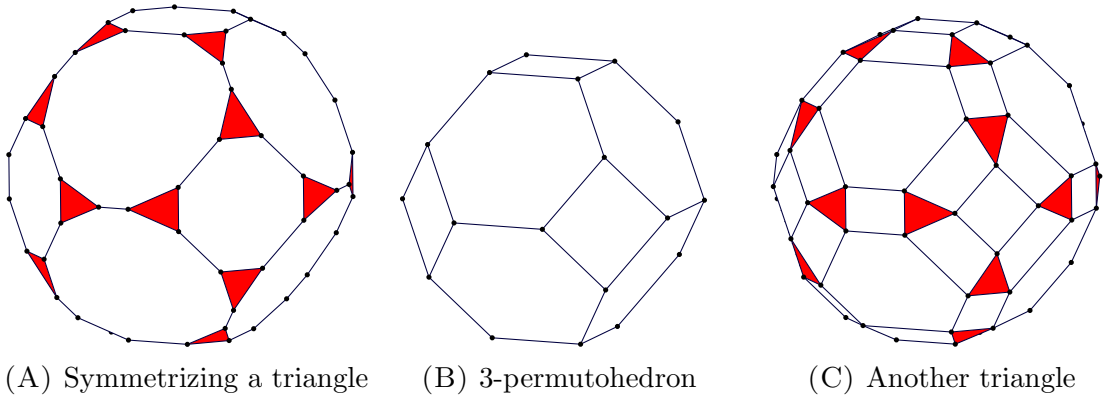

\begin{subfigure}{.3\textwidth}
\centering
\input{tikz/triangle_down.tex}
\caption{Symmetrizing a triangle}
\label{fig:works2}
\end{subfigure}%
\begin{subfigure}{.3\textwidth}
\centering
\begin{tikzpicture}%
	[x={(0.924815cm, 0.083972cm)},
	y={(-0.009943cm, 0.980336cm)},
	z={(0.380287cm, -0.178577cm)},
	scale=0.300000,
	back/.style={loosely dotted, thin},
	edge/.style={color=blue!25!black, thin},
	facet/.style={fill=white,fill opacity=0.800000},
	vertex/.style={inner sep=0.4pt,circle,draw=black,fill=black,thick}]
%
%
\coordinate (0, -6, 3) at (0, -6, 3);
\coordinate (0, 6, 3) at (0, 6, 3);
\coordinate (6, 0, 3) at (6, 0, 3);
\coordinate (6, -3, 0) at (6, -3, 0);
\coordinate (3, 6, 0) at (3, 6, 0);
\coordinate (6, 0, -3) at (6, 0, -3);
\coordinate (6, 3, 0) at (6, 3, 0);
\coordinate (3, -6, 0) at (3, -6, 0);
\coordinate (3, 0, 6) at (3, 0, 6);
\coordinate (3, 0, -6) at (3, 0, -6);
\coordinate (0, -3, 6) at (0, -3, 6);
\coordinate (0, 6, -3) at (0, 6, -3);
\coordinate (0, 3, 6) at (0, 3, 6);
\coordinate (-3, 6, 0) at (-3, 6, 0);
\coordinate (0, 3, -6) at (0, 3, -6);
\coordinate (-3, 0, 6) at (-3, 0, 6);
\coordinate (-6, 3, 0) at (-6, 3, 0);
\coordinate (-6, 0, 3) at (-6, 0, 3);
\coordinate (-6, -3, 0) at (-6, -3, 0);
\coordinate (-6, 0, -3) at (-6, 0, -3);
\coordinate (-3, 0, -6) at (-3, 0, -6);
\coordinate (0, -3, -6) at (0, -3, -6);
\coordinate (-3, -6, 0) at (-3, -6, 0);
\coordinate (0, -6, -3) at (0, -6, -3);
\draw[edge,back] (6, -3, 0) -- (6, 0, -3);
\draw[edge,back] (6, 0, -3) -- (6, 3, 0);
\draw[edge,back] (6, 0, -3) -- (3, 0, -6);
\draw[edge,back] (3, -6, 0) -- (0, -6, -3);
\draw[edge,back] (3, 0, -6) -- (0, 3, -6);
\draw[edge,back] (3, 0, -6) -- (0, -3, -6);
\draw[edge,back] (0, 6, -3) -- (0, 3, -6);
\draw[edge,back] (0, 3, -6) -- (-3, 0, -6);
\draw[edge,back] (-6, 0, -3) -- (-3, 0, -6);
\draw[edge,back] (-3, 0, -6) -- (0, -3, -6);
\draw[edge,back] (0, -3, -6) -- (0, -6, -3);
\draw[edge,back] (-3, -6, 0) -- (0, -6, -3);
\node[vertex] at (6, 0, -3)     {};
\node[vertex] at (3, 0, -6)     {};
\node[vertex] at (0, 3, -6)     {};
\node[vertex] at (0, -3, -6)     {};
\node[vertex] at (0, -6, -3)     {};
\node[vertex] at (-3, 0, -6)     {};
\fill[facet] (0, 3, 6) -- (0, 6, 3) -- (3, 6, 0) -- (6, 3, 0) -- (6, 0, 3) -- (3, 0, 6) -- cycle {};
\fill[facet] (0, -3, 6) -- (0, -6, 3) -- (3, -6, 0) -- (6, -3, 0) -- (6, 0, 3) -- (3, 0, 6) -- cycle {};
\fill[facet] (-3, 6, 0) -- (0, 6, 3) -- (3, 6, 0) -- (0, 6, -3) -- cycle {};
\fill[facet] (-3, 0, 6) -- (0, -3, 6) -- (3, 0, 6) -- (0, 3, 6) -- cycle {};
\fill[facet] (-6, 0, 3) -- (-3, 0, 6) -- (0, 3, 6) -- (0, 6, 3) -- (-3, 6, 0) -- (-6, 3, 0) -- cycle {};
\fill[facet] (-6, 0, -3) -- (-6, 3, 0) -- (-6, 0, 3) -- (-6, -3, 0) -- cycle {};
\fill[facet] (-3, -6, 0) -- (0, -6, 3) -- (0, -3, 6) -- (-3, 0, 6) -- (-6, 0, 3) -- (-6, -3, 0) -- cycle {};
\draw[edge] (0, -6, 3) -- (3, -6, 0);
\draw[edge] (0, -6, 3) -- (0, -3, 6);
\draw[edge] (0, -6, 3) -- (-3, -6, 0);
\draw[edge] (0, 6, 3) -- (3, 6, 0);
\draw[edge] (0, 6, 3) -- (0, 3, 6);
\draw[edge] (0, 6, 3) -- (-3, 6, 0);
\draw[edge] (6, 0, 3) -- (6, -3, 0);
\draw[edge] (6, 0, 3) -- (6, 3, 0);
\draw[edge] (6, 0, 3) -- (3, 0, 6);
\draw[edge] (6, -3, 0) -- (3, -6, 0);
\draw[edge] (3, 6, 0) -- (6, 3, 0);
\draw[edge] (3, 6, 0) -- (0, 6, -3);
\draw[edge] (3, 0, 6) -- (0, -3, 6);
\draw[edge] (3, 0, 6) -- (0, 3, 6);
\draw[edge] (0, -3, 6) -- (-3, 0, 6);
\draw[edge] (0, 6, -3) -- (-3, 6, 0);
\draw[edge] (0, 3, 6) -- (-3, 0, 6);
\draw[edge] (-3, 6, 0) -- (-6, 3, 0);
\draw[edge] (-3, 0, 6) -- (-6, 0, 3);
\draw[edge] (-6, 3, 0) -- (-6, 0, 3);
\draw[edge] (-6, 3, 0) -- (-6, 0, -3);
\draw[edge] (-6, 0, 3) -- (-6, -3, 0);
\draw[edge] (-6, -3, 0) -- (-6, 0, -3);
\draw[edge] (-6, -3, 0) -- (-3, -6, 0);
\node[vertex] at (0, -6, 3)     {};
\node[vertex] at (0, 6, 3)     {};
\node[vertex] at (6, 0, 3)     {};
\node[vertex] at (6, -3, 0)     {};
\node[vertex] at (3, 6, 0)     {};
\node[vertex] at (6, 3, 0)     {};
\node[vertex] at (3, -6, 0)     {};
\node[vertex] at (3, 0, 6)     {};
\node[vertex] at (0, -3, 6)     {};
\node[vertex] at (0, 6, -3)     {};
\node[vertex] at (0, 3, 6)     {};
\node[vertex] at (-3, 6, 0)     {};
\node[vertex] at (-3, 0, 6)     {};
\node[vertex] at (-6, 3, 0)     {};
\node[vertex] at (-6, 0, 3)     {};
\node[vertex] at (-6, -3, 0)     {};
\node[vertex] at (-6, 0, -3)     {};
\node[vertex] at (-3, -6, 0)     {};
\end{tikzpicture}
\caption{3-permutohedron}
\label{fig:pi3}
\end{subfigure}
\begin{subfigure}{.3\textwidth}
\centering
\input{tikz/triangle_up.tex}
\caption{Another triangle}
\label{fig:fail}
\end{subfigure}
\caption{Symmetrizing combinatorially equivalent polytopes can lead to combinatorially different polytopes. }
\label{fig:intro}
\end{figure}

The combinatorics of a polytope is captured by its face poset, and the normal fan of a polytope contains not only its combinatorics but also some of its geometric information. Our first main result, Theorem \ref{thm:fundamental_cones}, describes the normal fan of $ \fG(P) $ in terms of the normal fan of $ P $, as long as $ P $	satisfies certain properties. 
The main feature of finite reflection groups that we exploit is the existence of a {fundamental chamber} $\Phi$.
This is a cone that controls the geometry of the group action.
To describe the combinatorics of the symmetrization we need to understand, very precisely, how the fundamental chamber is subdivided by the normal fan of $ P $.
This lead us to the definition of the ``fundamental fan'' and its ``refined'' version, expanded in Section \ref{sec:fundamental}.
The combinatorics of the refined fundamental fan is described by a decorated poset that gives enough information to construct the face poset of the resulting symmetrization.
Theorem \ref{thm:symposet} allows us to go from the refined fundamental fan of $ P $ to the normal fan of $ \fG(P) $.

The main application of this article is an instance of the realization problem, that is, determine whether a given poset is the face poset of a polytope.
A \emph{$ \fG $-symmetric poset} is a poset with a $ \fG $-action on the elements that preserves the partial order.  
Theorem \ref{thm:sym-poset} reduces the realization problem of a $\fG$-symmetric poset to a problem of realizing its generating subposet.  
We give an example of how to take advantage of our result to solve such a realization problem in the last section. 
We realize a poset that is a \emph{hybrid} between the poset of ordered set partitions and the Boolean poset. 

In the literature, there have been instances of polytopes where two combinatorial structures are combined to form a new one.  
The face posets of some of them are symmetric posets. 
Typically, this involves having polytopes $P$ and $Q$	where each face of $P$ is modified using a structure derived from $Q$.
One recent example is the bipermutohedron \cite{ardila2020bipermutahedron}.
This polytope is constructed at the level of fans by subdividing the chambers of the normal fan of the diagonal simplex with carefully placed copies of the braid fan, and its face poset is $\fS_d$-symmetric.
Another example of a symmetric poset that can be realized as a polytope is the permuto-associahedron, first constructed by Kapranov \cite{kapranov} as a hybrid of the face poset of the permutohedron and that of an associahedron. 
It was then realized as polytopes independently by Reiner-Ziegler \cite{reiner1994coxeter}, Gaiffi \cite{gaiffi}, and the authors \cite{castillo2023permuto}.
A different hybrid between permutohedron and associhedron is constructed by Barali\'c-Ivanovi\'c-Petri\'c \cite{baralic_ivanovic_petric} both in the poset setting and in the polytope setting.

The original motivation of this paper comes from the observation that all aforementioned constructions for permuto-associahedron can be viewed as the $\fS_d$-symmetrization of an associahedron. 
However, the goal of this paper is twofold. 
On one hand, we want to build a general theory on $\fG$-symmetrization of a polytope (satisfying some conditions) with an arbitrary reflection group $\fG$ that unifies these constructions. 
One the other hand, since ``combining two structures'' is not a formal definition, it would be interesting to establish formally when one can say a poset (a polytope, respectively) is a \emph{hybrid} of two other posets (polytopes, respectively).
As the $\fG$-symmetrization of a polytope $P$ can be considered as the hybrid between a $\fG$-permutohedron and the polytope $ P $, studying the connection of the face poset of $\fG(P)$ to the combinatorics of the fundamental chamber $\Phi$ associated to $\fG$ and the face poset of $P$ will give us hints on what might be a correct definition of a \emph{hybrid} poset, at least in this particular setting (see Definition \ref{defn:hybrid} and Remark \ref{defn:hybrid}). 
We remark that although the face poset of the bipermutohedron \cite{ardila2020bipermutahedron} is $\fS_d$-symmetric, it does not fit in our framework of $ \fS_{d} $-symmetrizations, and thus it is a not a \emph{hybrid} in the sense we define in this paper. 

In an upcoming paper, we will explore the connection between the type-A symmetrization of a polytope and its type-B symmetrization, and show how to construct its type-B symmetrization from its  type-A symmetrization.

\subsection*{Organization of the paper}
In Section \ref{sec:prelim} we review some basic results and fix our notation.
In Section \ref{sec:sym-polytope} we develop the basic results about symmetrizing a polytope.
In Section \ref{sec:fundamental} we focus on the combinatorics of fundamental fans.
In Section \ref{sec:symm-poset} we describe symmetrization in terms of posets.
Finally in Secion \ref{sec:application} we show an extended example of how to solve a realization problem using our tools.

\subsection*{Acknowledgements}
We thank Jean-Philippe Labb\'e for helpful conversations.
We also thank the Mathematics Department at UC Davis for the hospitality during the summer of 2023.
Federico Castillo is partially supported by FONDECYT Regular Grant \#1221133. 
Fu Liu is partially supported by a grant from the Simons Foundation \#426756 and an NSF grant \#2153897-0.

\section{Preliminaries}
\label{sec:prelim}

\subsection{Polytopes and normal fans}

We begin by reviewing basic notions of polyhedral geometry.
A general reference is \cite{ewald}.
Let $V \subseteq \R^d$ be a subspace of the $d$-dimensional Euclidean space and $W$ is the dual space of $V$ that contains all linear functionals on $V$, 
and suppose the perfect pairing between $V$ and $W$ is $\langle \cdot, \cdot \rangle: W \times V \to \R$. 

In this paper, we will assume that $[ \cdot , \cdot ]$ is an inner product on $\R^d$ (which will be the dot product on $\R^d$ for all the examples we have).
This naturally gives a way to define the dual space $W$ of $V$ as a quotient space of $\R^d.$ 
More precisely, $W = \R^d/V^\perp$, where $V^\perp = \{ \x \in \R^d \ : \ [\x, \v] = 0, \ \forall \v \in V\}$ is the \emph{orthogonal complement} of $V$, 
and the perfect pairing between $V$ and $W$ is induced by the inner product $[ \cdot , \cdot ]$. In the case when $V \neq \R^d$ and thus $V^\perp$ is nontrivial, each vector $\w \in W$ is an equivalent class of vectors in $\R^d$. Abusing the notation, we always use one of the vectors in this equivalent class to represent $\w$, called a \emph{representative} of $\w$.

Let $U \subseteq \R^d$ be an affine space in $\R^d$ that is a translation of $V,$ that is, there exists $\x \in \R^d$ such that $U = V + \x.$
One sees that there exists a unique $\x_0 \in V^{\perp}$ such that $U = V + \x_0;$ in this case, we will write $U=V(\x_0).$ We may consider $U=V(\x_0)$ and $W$ a pair of dual spaces with perfect pairing induced by the pairing between $V$ and $W$, which is induced by the inner product on $\R^d$. Abusing the notation, we still use $\langle \ , \ \rangle$ to denote the pairing between $U$ and $W$. Hence, for any $\u \in U$ and $\w \in W$, we have
\[ \langle \w, \u \rangle \defeq \langle \w, \u-\x_0\rangle = [ \w, \u-\x_0].\]

A \emph{polytope} in $U$ is the convex hull of a finite set in $U$. We often call an $e$-dimensional polytope an \emph{$e$-polytope}. 
A \emph{polyhedron} is the intersection of finitely many closed halfspaces.
By Minskowsi-Weyl Theorem \cite[Theorem 1.4]{ewald} a polytope is a bounded polyhedron.
A \emph{face} of a polyhedron $P$ in $U$ is a subset $F\subseteq P$ such that there exists $\w\in W$ such that \[F=\left\{\x\in P~:~ \langle \w, \x \rangle \geq \langle \w, \y \rangle, \quad \forall \y \in P \right\}.\]
Notice that $ P $ is a face of itself, by considering $ \w = \mathbf{0} $.
In addition we consider $\emptyset$ to be a face of $P$ (of dimension -1). 
We denote by $ f_{i}(P) $ the number of $ i $-dimensional faces of $P$, and call $(f_0, f_1, \dots, f_{e-1}, f_e)$ the \emph{f-vector} of $P$. 
In this paper, when we discuss interior of a polyhedron $P$, denoted by $P^\circ$, we always mean the relative interior of $P$.

A \emph{homogeneous cone} is a polyhedron defined by homogeneous linear inequalities. Alternatively, a homogeneous cone $K$ can be defined as the \emph{conic hull} of a set $R=\{ \r_1,\dots, \r_k\}$ of vectors: 
\[ K=\cone( \r_1, \dots, \r_k) \defeq \left\{ \sum_{i=1}^k \lambda_i \r_i \ : \ \lambda_i \ge 0\right\}.\]
We call any such a set $R$ is a \emph{generating set} for $K$, and an inclusion-minimal generating set is a \emph{basis} for $K$. 
A \emph{(polyhedral) cone} is a translation of a homogeneous cone. 
A cone is \emph{pointed} if it contains a $0$-dimensional face. 
It is well-known that for a pointed cone $K$, a set $\{ \r_1, \dots, \r_k\}$ of vectors is a basis for $K$ if and only if $\{ \cone(\r_i) \ : \ 1 \le i \le k\}$ is the set of rays, i.e., one-dimensional faces, of $K.$ A \emph{simplicial} cone is a pointed cone $K$ generated by an independent set of vectors, which is equivalent to the number of rays of $K$ equals to the dimension of $K$.

A \emph{fan} is a collection $\Sigma$ of cones that is a simplicial complex, i.e., if $\sigma \in \Sigma$, then any face of $\sigma$ is in $\Sigma$, and if $\sigma_1, \sigma_2 \in \Sigma,$ then $\sigma_1 \cap \sigma_2$ is a face of both.
The maximal dimensional cones of a fan are called \emph{chambers}, and the \emph{dimension} of a fan is the dimension of its maximal dimensional cones. 
We denote by $ f^{i}(\Sigma) $ the number of codimension $ i $ cones in $ \Sigma $.
The \emph{support} of a fan is the union of its cones.
A fan is \emph{complete} if its support is the entire space.

A \emph{conic dissection} of $W$ is a collection of full-dimensional cones such that the union is $W$ and different cones have disjoint interiors. 
Note that the chambers of a complete fan form a conic dissection.
A fan or conic dissection is \emph{pointed} (resp. \emph{homogeneous}) if all of its cones are pointed (resp. homogeneous). 
All the fans in $W$ and conic dissections of $W$ we consider in this paper are pointed and homogeneous, and all the fans in $U=V(\x_0)$ are pointed with $\x_0$ being their $0$-dimensional faces. 

The set of all nonempty faces of $P$ partially ordered by inclusion forms the \emph{face poset\footnote{The usual definition of face poset includes the nonempty set as the minimum element. But in this paper, we use the convention of not including the empty face.} } $\FL(P)$ of $P$. 
Note that the face poset of an $e$-polytope is graded of rank $e$.  
Likewise, the set of all cones in a fan $ \Sigma $ partially ordered by inclusion forms the \emph{face poset} $ \FL(\Sigma) $ of $ \Sigma $.
If two polytopes or fans have isomorphic face posets, we say they are \emph{combinatorially equivalent}; otherwise, we say they are \emph{combinatorially different}.

\begin{defn}\label{defn:normal}
Suppose $U, V, W$ are given as above, and $P\subseteq U$ is a polytope.
Given any a nonempty face $F$ of $P$, the \emph{normal cone} of $P$ at $F$ with respect to $W$ is defined to be
\[
\ncone_W(F, P) \defeq   \left\{ \w \in W \ : \ \langle \w, \y \rangle \geq \langle \w, \y' \rangle, \quad \forall \y \in F,\quad \forall \y' \in P \right\}.
\]
Therefore, $\ncone_W(F,P)$ is the collection of linear functionals $\w$ in $W$ such that $\w$ attains maximum value at $F$ over all points in $P.$ 
The \emph{normal fan} of $P$ with respect to $W$, denoted by $\Sigma_W(P)$, is the collection of all normal cones of $P$ as we range over all nonempty faces of $P$.
Since $P$ can be considered to be in any (affine) space $U$ that contains the affine span of $P$ and thus we can have difference choices of $W$. This is the reason why we include the subscript $W$.
We omit the subscript $W$ when it is clear which pair $(U,W)$ of dual spaces we use. 
\end{defn}

If a fan is the normal fan of a polytope we say it is \emph{polytopal}.
It is easy to see that $P$ is full-dimensional in $U$ if and only if $0 \in \Sigma(P),$ equivalently, all chambers in $\Sigma(P)$ are pointed.
Additionally for $P\subseteq U$ we define the \emph{perpendicular space} of $P$ to be
\begin{equation}\label{eq:Pvee}
P^{\vee} \defeq  \{\w\in W~:~\langle\w,\x\rangle=\langle\w,\y\rangle,\quad \forall \x,\y\in P\}.
\end{equation}
The subspace $P^\vee$ has positive dimension if and only if $P$ is not full-dimensional in $U$.

The following lemma summarize a few results on the relation between normal cones and faces of a polytope that can be verified directly from the definition. 
\begin{lem}\label{lem:face&fan} Let $P$ be a polytope.
    The $F \mapsto \ncone(F, P)$ induces an order-reversing bijection from 
    $\cF(P)$ to $\cF(\Sigma(P))$. That is,  
for any two faces $F, G$ of $P$, we have
\[ F \subseteq G \quad \text{if and only if} \quad \ncone(G, P) \subseteq \ncone(F,P).\]
Therefore, the poset 
$\cF(P)$ is isomorphic to the dual of the poset $\cF(\Sigma(P))$.

  Furthermore, if $F$ is an $k$-dimensional face of $P$, then $\ncone(F,P)$ is codimension $k$ in $\Sigma(P)$. Hence, for any $k,$
   \[ f_k(P) = f^k(\Sigma(P)).\]
\end{lem}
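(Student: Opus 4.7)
The plan is to establish a two-sided inverse for $F \mapsto \ncone(F, P)$, from which both the order-reversing property and the codimension formula follow.

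First I would construct a candidate inverse $\sigma \mapsto G(\sigma)$: for a cone $\sigma \in \Sigma(P)$, pick any $\w$ in the relative interior $\sigma^\circ$ and set
\[
G(\sigma) \defeq \left\{ \x \in P \ : \ \langle \w, \x\rangle = \max_{\y \in P} \langle \w, \y\rangle \right\}.
\]
The crucial well-definedness check is that $G(\sigma)$ is independent of $\w \in \sigma^\circ$: for any $\w_1, \w_2 \in \sigma^\circ$, the face maximized by a convex combination $\lambda \w_1 + (1-\lambda)\w_2$ is the intersection of the faces maximized by $\w_1$ and $\w_2$, and since small perturbations of an interior point of $\sigma$ remain in $\sigma^\circ$, both factors must coincide with the intersection. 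I would then verify the identities $G(\ncone(F, P)) = F$ and $\ncone(G(\sigma), P) = \sigma$: the first holds because a generic $\w$ in $\ncone(F, P)^\circ$ maximizes exactly on $F$, and the second is essentially the definition of the normal cone.

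The order-reversing property is then immediate: if $F \subseteq G$ and $\w \in \ncone(G, P)$, then every $\y \in F$ satisfies $\y \in G$ and hence $\langle \w, \y\rangle \geq \langle \w, \y'\rangle$ for all $\y' \in P$, giving $\w \in \ncone(F, P)$. The converse follows by applying $G$ to the inclusion $\ncone(G, P) \subseteq \ncone(F, P)$ and invoking the bijection.

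For the codimension claim, suppose $F$ is a $k$-dimensional face. Every $\w \in \ncone(F, P)$ is constant on $F$, so $\ncone(F, P) \subseteq F^\vee$; since $F^\vee$ is a linear subspace of $W$ of dimension $\dim W - k$, this bounds the codimension of $\ncone(F, P)$ in the fan from below by $k$. To show equality, I would fix a relative-interior point $\x_0 \in F^\circ$ and analyze the local structure of $P$ at $\x_0$: the linear functionals in $F^\vee$ that strictly decrease along every edge of $P$ emanating from $F$ form a nonempty open subset of $F^\vee$, and each such functional lies in $\ncone(F, P)^\circ$, yielding $\dim \ncone(F, P) = \dim W - k$. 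The f-vector equality $f_k(P) = f^k(\Sigma(P))$ then falls out by counting preimages under the bijection. The main obstacle is the local analysis underpinning both the independence of the inverse map from the chosen interior functional and the full-dimensionality of $\ncone(F, P)$ inside $F^\vee$; both reduce to controlling how slight perturbations of $\w$ alter the maximizing face, which depends on the facet structure of $P$ at relative-interior points of $F$.
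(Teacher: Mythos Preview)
The paper does not actually prove this lemma; it is introduced with the sentence ``The following lemma summarize a few results on the relation between normal cones and faces of a polytope that can be verified directly from the definition,'' and no further argument is given. So your proposal already goes well beyond what the paper provides.

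Your sketch is along standard lines and essentially correct. A couple of minor comments: the argument that $G(\sigma)$ is independent of the choice of $\w \in \sigma^\circ$ is the one place where your write-up is a bit hand-wavy; a cleaner way to phrase it is that for any $\w \in \sigma^\circ$ the face $G(\sigma)$ satisfies $\ncone(G(\sigma),P)^\circ \ni \w$, and since the open normal cones partition $W$, the cone $\ncone(G(\sigma),P)$ must equal $\sigma$ itself, which forces $G(\sigma)$ to be the same face regardless of $\w$. Also, for the codimension argument, rather than analyzing edges emanating from $F$, it is slightly more direct to observe that $\ncone(F,P)$ contains $P^\vee$ (the perpendicular space of $P$, which has dimension $\dim W - \dim P$) and that within $F^\vee/P^\vee$ the cone is full-dimensional because any $\w_0 \in \ncone(F,P)^\circ$ can be perturbed in any direction of $F^\vee$ while remaining in the cone. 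But these are refinements; your overall plan is sound and, since the paper omits the proof entirely, you have nothing to compare against.
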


\begin{defn}\label{def:Boolean}
  The \emph{Boolean lattice} $ \cB_{d} $ is a poset that consists of all subsets of $ [d] $ ordered by inclusion.
We define $ \widehat{\cB}_{d} $, the \emph{truncated Boolean poset} to be the poset obtained from the Boolean lattice $ \cB_{d} $ by removing the empty set.
\end{defn}

\begin{ex}\label{ex:simplex}
  Let $U=V=W=\R^d$.  
	Recall the \emph{standard simplex} $\Delta_{d-1} \in \R^d$ is the convex hull of the standard basis $\{ \e_1, \e_2, \dots, \e_d\}$ of $\R^d$:
  \[ \Delta_{d-1} = \conv\{ \e_1, \e_2, \dots, \e_d\}.\]
  It is well-known that the map that sends each nonempty $B \subseteq [d]$ to $\Delta_{B} := \conv\{ i \in B \ : \ \e_i\}$ is a poset isomorphism from $\widehat{\cB}_{d}$ to the face poset $\cF(\Delta_{d-1})$ of the standard simplex $\Delta_{d-1}$. 
	Moreover, for each nonempty $B \subseteq [d]$, 
	\begin{equation}\label{eq:ncone_simplex} \ncone(\Delta_B, \Delta_{d-1}) = \sigma_{d,B} := \left\{ \y \in \R^d  \ : \ 
    \begin{array}{c} y_i = y_j \text{ if $i,j \in B$} \\
y_i \le y_j \text{ if $i \notin B$ and $j \in B$} \end{array} \right\}.\end{equation}
\end{ex}

\subsection{Finite reflection groups}\label{subsec:reflection}

Recall that $V$ is a subspace of $\R^d$ which is equipped with an inner product $[ \cdot , \cdot ]$, $U= V(\x_0) = V + \x_0$ is the translation of $V$ by a vector $\x_0 \in V^\perp$ and $W = \R^d/V^\perp$. We consider $U$ and $W$ are dual spaces with perfect pairing $\langle \ , \ \rangle: W \times U \to \R$ defined by
\[ \langle \w, \u \rangle = [ \w, \u - \x_0].\]
This allows us to identify points in $U$ and $W$: for any $\u \in U$, its corresponding point in $W$ is $\u^*\defeq (\u+V^\perp)/V^\perp;$ and for any $\w \in W$, its corresponding point in $U$ is the unique representative $\w^*$ of $\w$ that lies in $U.$

A \emph{hyperplane arrangement} is a finite set $\mathcal{H}=\{H_1,\cdots,H_m\}$ of affine hyperplanes in $ W $. 
In this paper, we assume that the intersection of all hyperplanes in $\cH$ is a single point.
Then the set $\mathcal{H}$ induces a pointed homogeneous polyhedral fan on $W$ that we denote $\Sigma(\mathcal{H})$.

Let $t_i\in \mathfrak{Gl}(W)$ be the reflection associated to the hyperplane $H_i$.
By definition the group $\mathfrak{G}\subset\mathfrak{Gl}(W)$ generated by $\{t_1,\dots,t_m\}$ acts on $W$, and dually on $U$. 
To be more precise, assume $\fG$ acts on the left on $W$, then there is a canonical way to define a right action of $\fG$ on $U$ such that
\begin{equation}
\langle g\w ,\u\rangle = \langle \w,\u g \rangle, \quad \text{for all $g \in \fG$, $\u \in U$, $\w \in W$}.
\label{eq:dualaction}
\end{equation}

We will focus on hyperplane arrangements that have the special property that $\mathfrak{G}$ is finite.
In this case $\mathfrak{G}$ is called a \emph{finite reflection group} and the set $\mathcal{H}$ and the group $\mathfrak{G}$  determine each other, see \cite[Section 1.14]{humphreys}. 

Let $H_i^*$ be the hyperplane in $U=V(\x_0)$ that identifies $H_i.$ Then $\mathcal{H}^*\defeq\{H_1^*,\dots,H_m^*\}$ is a hyperplane arrangement in $U$, which defines a corresponding pointed polyhedral fan $\Sigma(\cH^*)$ (with $\x_0$ being its $0$-dimensional face). 
We arbitrarily choose a chamber $\Phi$ of $\Sigma(\cH)$ and its corresponding chamber $\Psi = \Phi^*$ of $\Sigma(\mathcal{H}^*)$ and denote them the \emph{fundamental chambers}. For convenience, we call $(\Phi, \Psi)$ a pair of fundamental chambers associated to the hyperplane arrangement $\cH$. 
They will play a central role in what follows. 
For any $g \in \fG$, since the action of $g$ is an isomorphism on both $W$ and $U,$ we have that both $g\Phi $ and $ \Psi g$ are pointed cones. 
However, we have more than that \cite[Secion 1.12]{humphreys}: 
\begin{lem}\label{lem:Gbij}
The correspondences $g\mapsto g\Phi $ and $g\mapsto \Psi g$ gives a bijection between $\mathfrak{G}$ and the set of chambers of $\Sigma(\mathcal{H})$ and of $\Sigma(\mathcal{H}^*)$  respectively.
\end{lem}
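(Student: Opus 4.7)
The plan is to first prove that $g \mapsto g\Phi$ is a bijection between $\fG$ and the chambers of $\Sigma(\cH)$, and then deduce the analogous statement for $g \mapsto \Psi g$ via the duality relation \eqref{eq:dualaction}. This is the classical statement that a finite reflection group acts simply transitively on the chambers of its reflection arrangement.

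First I would verify that $g \mapsto g\Phi$ is well-defined. For a finite reflection group, the set $\cH$ of reflecting hyperplanes is $\fG$-invariant (conjugates of reflections by elements of $\fG$ are reflections in the images of the corresponding hyperplanes, cf.\ \cite{humphreys}, \S1.12), so the action of $\fG$ on $W$ permutes the connected components of $W \setminus \bigcup_i H_i$, which are the interiors of the chambers of $\Sigma(\cH)$.

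The key step is surjectivity (transitivity on chambers). I would use a gallery argument, inducting on the number $k$ of hyperplanes in $\cH$ that strictly separate $\Phi$ from a given target chamber $C$. The base case $k=0$ forces $C = \Phi$ since the interiors must then overlap. For the inductive step, choose a chamber $C'$ adjacent to $C$ across a separating hyperplane $H \in \cH$ so that $\Phi$ and $C'$ are separated by only $k-1$ hyperplanes; by induction $C' = g'\Phi$ for some $g' \in \fG$, and then $C = (t_H g')\Phi$ where $t_H$ is the reflection in $H$. For injectivity, I would show the stabilizer of $\Phi$ is trivial: if $g\Phi = \Phi$, then $g$ fixes some interior point of $\Phi$ (take the midpoint of $\x$ and $g\x$ for $\x \in \Phi^\circ$), but the fixed-point set of any nonidentity element of $\fG$ is contained in $\bigcup_i H_i$, which is disjoint from $\Phi^\circ$. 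Thus $g = 1$ and the map $g \mapsto g\Phi$ is injective.

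For the second correspondence, I would use \eqref{eq:dualaction} to check that the identification $*$ between $W$ and $U$ intertwines the left $\fG$-action on $W$ with the right $\fG$-action on $U$ in the sense that the chamber $g\Phi \subseteq W$ corresponds to the chamber $\Psi g^{-1} \subseteq U$. Concretely, $\w \in g\Phi$ iff $g^{-1}\w \in \Phi$, and pairing against $\u \in U$ using \eqref{eq:dualaction} shows this is equivalent to $\u g \in \Psi$ when $\u = \w^*$, i.e.\ $\w^* \in \Psi g^{-1}$. Combined with the first bijection and the fact that $g \mapsto g^{-1}$ is a bijection on $\fG$, this gives the second bijection $g \mapsto \Psi g$ between $\fG$ and the chambers of $\Sigma(\cH^*)$.

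The main obstacle is the transitivity/freeness pair for the first map; the duality step is essentially bookkeeping once the compatibility of the two $\fG$-actions encoded by \eqref{eq:dualaction} is made explicit.
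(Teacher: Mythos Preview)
The paper does not prove this lemma at all: it is stated with the citation \cite[Section 1.12]{humphreys} and used as a black box. Your proposal supplies the standard textbook argument (gallery induction for transitivity, triviality of the chamber stabilizer for freeness), which is exactly what one finds in Humphreys, so in that sense your approach matches what the paper implicitly invokes.

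One small gap worth flagging in your injectivity step: the midpoint of $\x$ and $g\x$ is \emph{not} fixed by $g$ unless $g$ has order $2$. The correct move is to average over the cyclic group $\langle g\rangle$, i.e.\ take $\frac{1}{n}\sum_{i=0}^{n-1} g^i\x$ where $n$ is the order of $g$; this lies in the convex set $\Phi^\circ$ and is genuinely $g$-fixed. Also, the assertion that the fixed-point set of any nonidentity $g\in\fG$ lies in $\bigcup_i H_i$ is itself a theorem (equivalent to the statement that point stabilizers are generated by the reflections they contain, \cite[Theorem 1.12]{humphreys}), so you are leaning on essentially the same result you are citing. None of this is fatal, but for a self-contained write-up you would want either to invoke Humphreys directly (as the paper does) or to be explicit about which auxiliary facts you are importing.
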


\begin{defnex}[type-A reflection group]\label{defn:typeA}
Let $[ \ , \ ]$ be the dot product on $\R^d$, and  $\balpha=(\alpha_1, \dots, \alpha_d)\in\R^d$.
Consider the space 
\[U^\balpha_{d-1} \defeq \left\{ \x \in \R^{d} \ : [\1, \x] = [\1, \balpha] = \sum_{i=1}^d \alpha_i \right\} \subseteq \R^{d}\]
where $\1 = (1, 1, \dots, 1)$ denotes the all-one vector in $\R^d$.
Then $U^\balpha_{d-1}$ is a translation of the vector space $V_{d-1} = \left\{ \x \in \R^{d} \ : [\1, \x] = 0 \right\}.$ 
We have $V_{d-1}^\perp$ is spanned by $\1$,
and the dual space of $U^\balpha_{d-1}$ is $W_{d-1}: = \R^d/\1$.
  The vector space $V_{d-1}$ has a canonical basis $ \left\{ \mathbf{a}_{1}, \dots, \mathbf{a}_{d-1} \right\} $ defined by
$\mathbf{a}_{i} = -\ee_{i}+\ee_{i+1},$
and $W_{d-1}$ has a canonical basis $\left\{\f_{1}^{(d)}, \dots, \f_{d-1}^{(d)} \right\}$ defined by
\begin{equation}\label{eq:defnfi}
		\f_{i}^{(d)} \defeq (\underbrace{0, \dots, 0,}_{i} \underbrace{1, \dots, 1}_{d-i}).
	      \end{equation}
Note that $W_{d-1}$ is a quotient space, and thus its basis is formed by quotient vectors. The vectors $\f_i^{(d)}$'s are just representatives of these quotient vectors. 
Often, when the length of the vector $\f_i^{(d)}$ is clear from the context, we simply write $ \f_{i} $. 
Finally, it is straightforward to verify that $\langle \a_i, \f_i^{(d)} \rangle = \delta_{i,j}.$ Hence, these two bases $\{ \a_i\}$ and $\{ \f_i^{(d)} \}$ are dual to one another.

We define the hyperplane arrangement 
\begin{equation}\label{eq:typeA}
\mathcal{A}_{d-1}\defeq  \{H_{ij}~:~1\leq i<j\leq d\}, \quad \text{ where }  H_{ij}\defeq  \{\w\in W_{d-1}~:~w_i=w_j\}.
\end{equation}
The arrangement $\mathcal{A}_{d-1}$ is called the \emph{braid arrangement} and its fan $\Sigma(\mathcal{A}_{d-1})$ the \emph{braid fan}. Its corresponding arrangement in $U$ is 
\[\cA_{d-1}^* = \{ H_{i,j}^* \ : \ 1 \le i < j \le d \}, \quad \text{ where } H_{ij}^* \defeq  \{\u\in U^\balpha_{d-1}~:~u_i=u_j\}.\]

The reflection group associated to $\cA_{d-1}$ is isomorphic to the symmetric group $\fS_{d}$, the group of bijective functions $\pi:[d]\to[d]$ under composition, 
and it acts on the left on $ W_{d-1} $ as follows: for each $\pi \in \fS_d$ and $\w=(w_1, \dots, w_d) \in W_{d-1} ,$
\[
\pi(w_1,\dots,w_d)  =(w_{\pi^{-1}(1)},\dots,w_{\pi^{-1}(d)}).
\]

In particular, if we let $t_{i,j}$ be the reflection associated to $H_{i,j}$, then $t_{i,j}$ corresponds to the transposition $(i,j) \in \fS_d$, whose action swaps the $i$th and $j$th coordinate of points in $W_{d-1}$.
It is easy to check that the corresponding right action of $\fS_d$ on $U^{\balpha}_{d-1}$ is given by 
\[
(u_1,\dots,u_d)\pi  =(u_{\pi(1)},\dots,u_{\pi(d)}).
\]
Finally, we choose the fundamental chambers for $\mathcal{A}_{d-1}$ (resp. $\cA_{d-1}^*$) to be
\small
\[
\Phi_{d-1} = \{\w\in W_{d-1}~:~w_1\leq w_2\leq \dots\leq w_d\}\qquad   (\text{resp. } \Psi_{d-1} = \{\u\in U^\balpha_{d-1}~:~u_1\leq u_2\leq \dots\leq u_d\} ),
\]
\normalsize
in other words, the set of all vectors in $W_{d-1}$ (resp. $U^\balpha_{d-1}$) that with weakly increasing entries.
Its interior $\Phi_{d-1}^\circ$ (resp. $\Psi_{d-1}^\circ$) consists of all vectors in $W_{d-1}$ (resp. $U^\balpha_{d-1}$) with \emph{strictly} increasing entries.
Notice that the fundamental chamber $\Phi_{d-1}$ is a simplicial cone with the basis $\left\{\f_{1}^{(d)}, \dots, \f_{d-1}^{(d)} \right\}$.

\end{defnex}

\subsection{Ordered set partitions } 
For convenience, we use $\Sigma(\Phi)$ to denote the fan that is induced by $\Phi$, which is the set of nonempty faces of $\Phi$. 
It turns out that understanding how cones in $\Sigma(\Phi)$ interact with the fan of the polytope that we will symmetrize is essential in our analysis in Sections \ref{sec:fundamental} and \ref{sec:symm-poset}. 
Since our main examples involve type-A reflection group, we will describe below in detail the face posets of $\Sigma(\cA_{d-1})$ and $\Sigma(\Phi_{d-1})$, which have close connection to the poset of ordered set partitions

\begin{defn}
\label{defn:ordered_set_partitions}
An \emph{ordered (set) partition} $ \cS $ of $ [d] $ is an ordered list of pairwise disjoint subsets (called \emph{blocks}) $(S_{1}, \dots, S_{k})$ such that $ \bigsqcup S_i = [d] $.
We define the \emph{type set} of $\cS=(S_{1}, \dots, S_{k})$, denoted by $\Type(\cS)$, to be the set 
$ \left\{ \sum_{j=1}^i |S_j| \ : \ 1 \le i \le k-1 \right\} \subseteq [d-1]$.

We denote by $ \mathcal{O}_{d} $ the set of ordered partitions of $ [d] $, 
and by $ \mathcal{O}^{i}_{d} $ the set of ordered partitions of $[d]$ with exactly $ i $ blocks.

Furthermore, we define a partial order $\leq$ on $ \mathcal{O}_{d} $ by declaring  $ \mathcal{S}_{1} \leq \mathcal{S}_{2} $ if $ \mathcal{S}_{1} $ is a refinement of $ \mathcal{S}_{2} $.

We say $ \cS=(S_1,\dots,S_k) \in \cO_{d} $ \emph{standard}, if every element of $ S_i $ is smaller than every element in $ S_{i+1} $ for $ i=1,\dots,k-1 $. 
We denote by $\sO_d^\std$ the set of all standard ordered set partitions in $\sO_d$.
\end{defn}

It is easy to see that the $  \mathcal{O}_d $ is a graded poset of rank $d-1$, and the rank of an ordered set partition $\cS$ is exactly $ d - |\cS| $, where $ |\cS| $ is the number of blocks in $\cS$. 
Hence, for $1 \le i \le d$, the set $\cO_d^{i}$ consists of all elements of $  \mathcal{O}_d $ of rank $d-i.$
In particular, the only element $([d])$ in $  \mathcal{O}_d^1 $ is the unique maximal element of $\cO_d$, and the set $\cO_d^d$ consists of all the $d!$ minimal elements of $\cO_d$, which are in bijection with $\fS_d$.  

\begin{defn}\label{def:osp_cone}
  For any $\cS=(S_1,\dots,S_k) \in \cO_{d}$, we define
	\begin{equation}\label{eq:osp_cone_alt} \sigma_\cS = \left\{ \w \in W_{d-1} \ : \begin{array}{c} w_i = w_j \text{ if $i,j \in S_{a}$ for some $ a $} \\
			w_i \le w_j \text{ if $i\in S_a, j\in S_b$ with $a < b$} \end{array} \right\}.\end{equation}
\end{defn}

The cones $ \sigma_\cS $ are part of a bigger family of preorder cones.

\begin{ex} 
  Let $\cS = (123, 45, 6 ,78) \in \cO_{8} $. 
	We have that $ \Type(\cS) = \left\{ 3,5,6 \right\} $.
  The cone  $\sigma_\cS$ in $W_7$ defined by 
  \[\{ \w \in W_7 ~:~ w_1 = w_2 = w_3 \leq w_{4} = w_{5} \leq w_{6} \leq w_{7} = w_{8} \}.\] 
\end{ex}

See \cite[Section 3]{postnikov2006faces} for more information and in particular the following lemma. 

\begin{lem}\label{lem:faces_Phi}
  The map $\cS \mapsto \sigma_\cS$ gives a poset isomorphism from $\cO_d$ to the dual of the poset $\cF(\Sigma(\cA_{d-1}))$.
Furthermore, it induces a poset isomorphism from $\cO_d^\std$ to the dual of the poset $\cF(\Sigma(\Phi_{d-1}))$.
\end{lem}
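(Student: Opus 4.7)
The plan is to prove both statements by constructing an explicit inverse map from cones back to ordered set partitions, then checking order-reversal directly on defining (in)equalities. The heart of the argument is that the defining description of $\sigma_\cS$ in \eqref{eq:osp_cone_alt} records exactly two pieces of data: an equivalence relation on $[d]$ (which coordinates are forced equal) and a total order on the equivalence classes (which block comes before which) — and these are precisely what an ordered set partition encodes.

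First I would check well-definedness: the set $\sigma_\cS$ is the intersection of the hyperplanes $H_{ij}$ for $i,j$ in the same block of $\cS$ with the closed half-spaces $\{w_i \le w_j\}$ for $i$ in a strictly earlier block than $j$. By construction this is a face of the braid fan $\Sigma(\cA_{d-1})$, so the map $\cS\mapsto\sigma_\cS$ indeed lands in $\cF(\Sigma(\cA_{d-1}))$. For surjectivity I would write down an inverse: given a face $\sigma$, define $i \sim j$ iff $w_i = w_j$ for all $\w\in\sigma$, which partitions $[d]$ into blocks $B_1,\dots,B_k$. Since $\sigma$ lies in the closure of a unique braid chamber up to choice, and lies in a single closed half-space of each $H_{ij}$, either $w_i \le w_j$ throughout $\sigma$ or $w_i \ge w_j$ throughout; picking any point in the relative interior of $\sigma$ gives strict inequality whenever $i\not\sim j$, which produces a well-defined total order on $\{B_1,\dots,B_k\}$ and hence an ordered set partition $\cS(\sigma)$. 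A direct comparison of defining relations shows $\cS(\sigma_\cS)=\cS$ and $\sigma_{\cS(\sigma)}=\sigma$, giving a bijection.

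For order-reversal I would argue both directions explicitly. If $\cS_1$ refines $\cS_2$, then each block of $\cS_1$ is contained in some block of $\cS_2$, and the blocks of $\cS_1$ that merge into a common block of $\cS_2$ are consecutive in $\cS_1$. Taking any $\w\in\sigma_{\cS_2}$, every equality imposed by $\cS_1$ is already imposed by $\cS_2$, and every inequality $w_i\le w_j$ required by $\cS_1$ is either upgraded to an equality or remains an inequality under $\cS_2$; hence $\sigma_{\cS_2}\subseteq\sigma_{\cS_1}$. Conversely, picking a point in the relative interior of $\sigma_{\cS_2}$ (where $w_i=w_j$ exactly when $i,j$ lie in the same block of $\cS_2$, and strict inequalities separate distinct blocks of $\cS_2$ in the prescribed order), the inclusion $\sigma_{\cS_2}\subseteq\sigma_{\cS_1}$ forces each block of $\cS_1$ to sit inside some block of $\cS_2$ and forces the induced orders to be compatible, so $\cS_1\le\cS_2$.

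For the second statement I would use that $\Phi_{d-1}=\sigma_{\cS_0}$, where $\cS_0=(\{1\},\{2\},\dots,\{d\})$ is the minimum standard ordered set partition. A face of $\Phi_{d-1}$ is obtained by selecting a subset of the defining inequalities $w_i\le w_{i+1}$ and turning them into equalities, which merges consecutive singletons into contiguous blocks; the resulting ordered set partition is precisely a standard one, and every standard $\cS$ arises this way. Combining with the first part, the restricted map $\cO_d^{\std}\to\cF(\Sigma(\Phi_{d-1}))$ is an order-reversing bijection. The main subtlety in the whole argument is making sure one is tracking not just refinement of the underlying set partitions but also compatibility of the total orders on blocks, which is why I want to work with relative interiors rather than arbitrary points when extracting the ordered set partition from a cone.
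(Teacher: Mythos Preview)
The paper does not prove this lemma at all; it simply cites \cite[Section 3]{postnikov2006faces} and states the result. Your proposal therefore supplies a self-contained proof where the paper gives none.

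Your argument is correct and follows the standard route: extract from each cone the equivalence relation (which coordinates are forced equal) and the total order on classes (read off from a relative-interior point), and check that this inverts $\cS\mapsto\sigma_\cS$. The order-reversal check is clean, and you are right to emphasize that ordered-set-partition refinement entails both block containment and compatibility of the block orders --- this is exactly what guarantees that an inequality of $\cS_1$ cannot flip direction under $\cS_2$. One small point you could make explicit: for the second statement you argue surjectivity (every face of $\Phi_{d-1}$ comes from a standard $\cS$) but should also note that the forward map lands in faces of $\Phi_{d-1}$; this follows immediately from the first part since any standard $\cS$ satisfies $\cS_0\le\cS$, whence $\sigma_\cS\subseteq\sigma_{\cS_0}=\Phi_{d-1}$, and a braid-fan cone contained in $\Phi_{d-1}$ is a face of it.
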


\begin{ex}
	Let $d=4$. 
	There are $8$ standard ordered set partitions, corresponding to the $8$ nonempty faces of $\Phi_3$. 
	Figure \ref{fig:labels} depicts an affine slice of the fundamental chamber $\Phi_3$ in which the affine slices of the $7$ positive dimensional faces of $ \Phi_{3} $. 
	The unique zero dimensional cone (the origin) corresponding to the trivial partition $([4])$ is not shown as it does not intersect any affine slice.

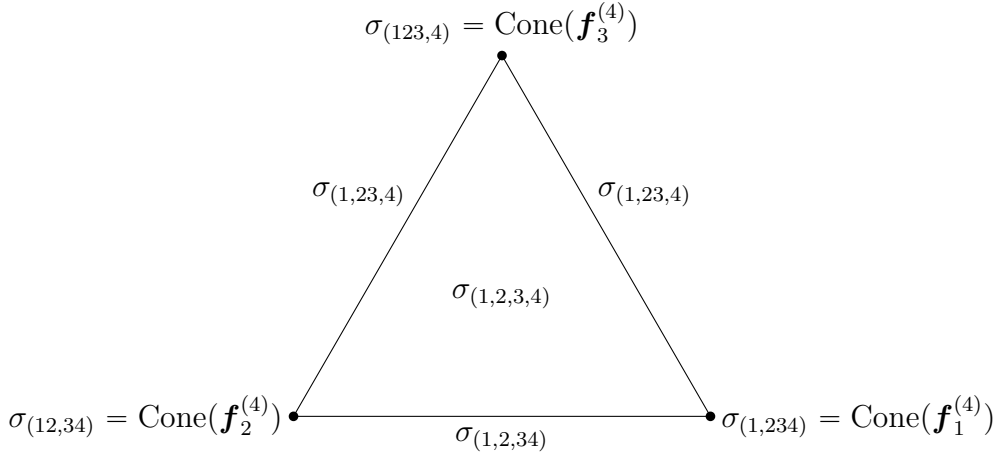
\begin{figure}[ht]
\begin{tikzpicture}[
vertex/.style={inner sep=1pt,circle,draw=black,fill=black,thick,anchor=base},
edge/.style={color=purple, thick},
normal/.style={color=black},
boundary/.style={color=black, dotted},
scale = 0.8
]					

\coordinate (0001) at (90:4);
\coordinate (0011) at (210:4);
\coordinate (0111) at (330:4);
\coordinate (0122) at (270:2);

\node[vertex] at (0001) {};
\node[vertex] at (0011) {};
\node[vertex] at (0111) {};

\draw[normal] (0001)--(0011)--(0111)--cycle;

\node at (0:0) {$\sigma_{(1,2,3,4)}$};
\node[right] at (50:2.2) {$\sigma_{(1,23,4)}$};
\node[left] at (130:2.2) {$\sigma_{(1,23,4)}$};
\node[above] at (0001) {$\sigma_{ (123,4)} = \cone(\f_3^{(4)}) $};
\node[left] at (0011) {$\sigma_{ (12,34)} = \cone(\f_2^{(4)}) $};
\node[right] at (0111) {$\sigma_{ (1,234)} = \cone(\f_1^{(4)}) $};
\node[below] at (0122) {$\sigma_{ (1,2,34 )}$};

\end{tikzpicture}
\caption{The positive dimensional faces of the fundamental chamber $\Phi_3$.}
\label{fig:labels}
\end{figure} 
\end{ex}

\subsection{Symmetric posets and generators} Suppose $\cT$ is a poset such that $\fG$ acts on it. Recall that we say $\cT$ is a \emph{$ \fG $-symmetric poset} is the $ \fG $ action preserves the partial order. We are interested in its subposets that generate it: 

\begin{defn}
	\label{defn:symmetric_poset}
	Suppose $\cT$ is a $\fG$-symmetric poset and $\cZ$ an induced subposet of $\cT$.
	We say that $\cZ$ is a \emph{generating subposet} or a \emph{$\fG$-generator} for $\cT$
	if the following conditions are satisfied:
  \begin{enumerate}
    \item 
      We have the following disjoint decomposition for $\cT$ as a set:
      \[ \cT = \bigsqcup_{z \in \cZ} \fG \cdot z.\]

    \item Suppose $t_i=g_i z_i$ for some $g_i \in \fG$ and $z_i \in \cZ$, for each $i=1,2$. Then $t_1 \le_{\cT} t_2$ if and only if $z_1 \le_{\cZ} z_2$ and there exists $g \in \fG$ such that $t_i = g z_i$ for $i=1,2$. 
  \end{enumerate}

\end{defn}

\begin{ex}\label{ex:Boolean_symmetric}
An example of a $ \fS_{d} $-symmetric poset is the truncated Boolean poset $\widehat{ \cB}_{d} $ (Definition \ref{def:Boolean}). 
An element $ \pi \in \fS_d $ is a function on $ [d] $ and so it acts on the subsets by acting directly on each element.
A generating subposet is the chain $ \left\{ 1 \right\} \leq \left\{ 1,2 \right\} \leq  \dots \leq \left\{ 1,2,  \dots ,d \right\} $.
\end{ex}

\begin{ex}
	Similar to Example \ref{ex:Boolean_symmetric}, the poset $ \cO_d $ of ordered partitions is $ \fS_{d} $-symmetric.
  A generating subposet is the induced subposet $\cO_d^\std$ of standard ordered partitions. 
\end{ex}

\section{Symmetrizing polytopes} \label{sec:sym-polytope}

In this section, we develop the idea of symmetrization of polytopes in the most general form.
From now on, we always assume $\mathfrak{G}\subseteq \Gl(W)$ is a finite reflection group arising from (or determining) the hyperplane arrangement $\cH$ in $W$, where $\Sigma(\cH)$ is a pointed homogeneous polyhedral fan, and that $(\Phi, \Psi)$ is a pair of fundamental chambers associated to $\cH.$

\begin{defn}
For any polytope $P\subseteq V$, we define its \emph{$\mathfrak{G}$-symmetrization} to be the polytope
\begin{equation}
\mathfrak{G}(P)=\conv\{\x g~:~g\in\mathfrak{G}, \x\in P\}.
\end{equation}	
\end{defn}

\begin{ex}\label{ex:permutohedron}
The most basic $\mathfrak{G}$-symmetric polytope comes from  symmetrizing a single point.
In the case of type-A reflection group $\fS_d$, given any vector $\u=(u_1,\dots,u_d) \in U_{d-1}^\balpha$, the $\fS_d$-symmetrization of $\{\u\}$ is known as the permutohedron $P_d(u_1,\dots,u_d)$ in \cite[Definition 2.1]{postnikov2009permutohedra}.
For a general reflection group $\mathfrak{G}$, the symmetrization $\mathfrak{G}(\u)$ of an aribitrary $\u \in U$ is known in the literature as an orbit polytope \cite{hohlweg2011permutahedra}, and the symmetrization $\fG(\u)$ of $\u \in \Psi^\circ$ is called a \emph{$\fG$-permutohedron}.
\end{ex}

\begin{prop}{\cite[Section 3.1]{hohlweg2011permutahedra}}
\label{prop:g-perm}
Suppose $\u \in\Psi^\circ$. 
Then for each $g\in \mathfrak{G}$, the point $\u g$  is a vertex of $\mathfrak{G}(\u)$ and $\ncone(\u g,\mathfrak{G}(\u))=g^{-1}\Phi $.
As a consequence, the fan $\Sigma(\mathcal{H})$ is the normal fan of {the $\fG$-permutohedron} $\mathfrak{G}(\u)$. Since $\Sigma(\mathcal{H})$ is pointed, $\fG(\u)$ is full-dimensional.
\end{prop}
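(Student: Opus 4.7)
The plan is to establish the normal-cone identity $\ncone(\u g, \fG(\u)) = g^{-1}\Phi$ directly, and then read off the remaining assertions from it.

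First I would translate the defining inequalities of the normal cone through the duality relation \eqref{eq:dualaction}. Since $\fG(\u)=\conv\{\u h : h \in \fG\}$, a vector $\w \in W$ lies in $\ncone(\u g, \fG(\u))$ if and only if $\langle \w, \u g\rangle \ge \langle \w, \u h\rangle$ for every $h \in \fG$. Rewriting each pairing as $\langle \w, \u h\rangle = \langle h\w, \u\rangle$, the condition becomes that $g\w$ maximizes the linear functional $\langle \cdot, \u\rangle$ over the orbit $\fG\w$. The proof thus reduces to the following key lemma: \emph{if $\u \in \Psi^\circ$ and $\w' \in W$, then $\langle g\w', \u\rangle$ is maximized over $g \in \fG$ precisely when $g\w' \in \Phi$}. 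To prove the lemma I would induct on the length of the element $s \in \fG$ needed to move $\w'$ into $\Phi$, expressed as a reduced word in the simple reflections associated to the walls of $\Phi$. For a single simple reflection $t$ across a wall $H$ and any $\w_0 \in \Phi$, the difference $\w_0 - t\w_0$ is a nonnegative multiple of the root $\alpha_H$ (since $\w_0$ lies on the positive side of $H$), while $\langle \alpha_H, \u\rangle > 0$ because $\u \in \Psi^\circ$ lies strictly on the positive side of the wall of $\Psi$ dual to $H$. Hence $\langle \w_0, \u\rangle > \langle t\w_0, \u\rangle$, and the standard reduced-word argument (see \cite[Chapter 1]{humphreys}) carries through the induction. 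Because $\Phi$ is a fundamental domain for the $\fG$-action, every orbit meets $\Phi$, giving the lemma.

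Granted the lemma, a vector $\w$ belongs to $\ncone(\u g, \fG(\u))$ iff $g\w \in \Phi$, equivalently $\w \in g^{-1}\Phi$, which establishes the normal-cone identity. Because $g^{-1}\Phi$ is full-dimensional (chambers of $\Sigma(\cH)$ are pointed and full-dimensional), Lemma \ref{lem:face&fan} forces $\u g$ to be a $0$-dimensional face of $\fG(\u)$, i.e., a vertex. Finally, as $g$ ranges over $\fG$, Lemma \ref{lem:Gbij} identifies $\{g^{-1}\Phi : g \in \fG\}$ with the set of chambers of $\Sigma(\cH)$, so the normal fan $\Sigma(\fG(\u))$ agrees with $\Sigma(\cH)$ on chambers and therefore coincides with it; full-dimensionality of $\fG(\u)$ is then immediate from the pointedness of $\Sigma(\cH)$ via the criterion recalled in Section \ref{sec:prelim}.

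The main obstacle is the orbit-maximization lemma itself: making the reduced-expression induction rigorous requires invoking the classical structure theory of finite reflection groups (roots, Weyl chambers as fundamental domains, length function and exchange condition). In the type-$A$ case it collapses to the rearrangement inequality, but for a general finite reflection group one genuinely needs the results from \cite{humphreys}. One also has to verify carefully that the positivity of $\langle \alpha, \u\rangle$ for each simple root $\alpha$ of $\Phi$ transfers across the identification between $W$ and $U$ induced by the inner product, so that $\u \in \Psi^\circ$ really pairs strictly positively with every such $\alpha$.
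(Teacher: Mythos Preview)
The paper does not supply a proof of this proposition: it is stated with a citation to \cite[Section 3.1]{hohlweg2011permutahedra} and used as a black box. Your argument is the standard one underlying that reference (and the treatment in \cite{humphreys}): reduce to the orbit-maximization lemma and prove the latter by induction on Coxeter length via the simple-root inequality $\langle \alpha_H, \u\rangle>0$ for $\u\in\Psi^\circ$. The outline is correct and the obstacles you flag are the genuine ones; there is nothing to compare against in the paper itself.
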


\begin{ex}\label{ex:g-perm}
Consider the braid arrangement $ \mathcal{A}_{d-1} $ and its corresponding reflection group $ \fS_{d} $. Let $\u \in \Psi_{d-1}^\circ.$ Then $\fS_d(\u)$ is a $\fS_d$-permutohedron of dimension $d-1$. The normal cone of $\fS_d(\u)$ at the vertex $\u$ is the fundamental chamber $\Phi_{d-1}$. An example for $d=3$ is shown in Figure \ref{fig:normal_cone}.

Moreover, we have that $\Sigma(\cA_{d-1})$ is the normal fan of $\fS_d(\u)$. Therefore, it follows from Lemmas \ref{lem:face&fan} and \ref{lem:faces_Phi} that the face poset $\cF(\fS_d(\u))$ of the $\fS_d$-permutohedron $\fS_d(\u)$ is isomorphic to the poset $\cO_d$ of ordered set partitions of $[d]$.

\begin{figure}[ht]
\begin{tikzpicture}
\begin{scope}[xshift=-3cm]
\draw(240:2cm)--(60:2cm) (180:2cm)--(0:2cm) (120:2cm)--(300:2cm);
\draw (0:2cm)--(0:0cm)--(60:2cm);
\draw[fill=white!75!black, draw=none] (0:2cm) arc (0:60:2);
\draw[fill=white!75!black, draw=none](0:0cm) -- (0:2cm) -- (60:2cm) -- cycle;

\node at (30:2.2cm) {$\Phi_{2}$};
\end{scope}

\begin{scope}[xshift=+3cm]
\draw[dashed] (240:2cm)--(60:2cm) (180:2cm)--(0:2cm) (120:2cm)--(300:2cm);

\filldraw[black] (50:1.5cm) circle (1.5 pt);

\node[right] at (50:1.5cm) {$\u$};
\node at (30:2.2cm) {$\Psi_{2}$};

\draw (70:1.5cm) -- (170:1.5cm) -- (-170:1.5cm) -- (290:1.5cm) -- (-50:1.5cm) -- (50:1.5cm)--cycle;

\draw[fill=black] (70:1.5cm) circle (1.5 pt);
\draw[fill=black] (170:1.5cm) circle (1.5 pt);
\draw[fill=black] (-170:1.5cm) circle (1.5 pt);
\draw[fill=black] (290:1.5cm) circle (1.5 pt);
\draw[fill=black] (-50:1.5cm) circle (1.5 pt);

\end{scope}

\end{tikzpicture}
\caption{An example of a permutohedron (in the right) and its normal fan (in the left). The fundamental chamber $ \Phi_{2} $ is highlighted.}
\label{fig:normal_cone}
\end{figure}
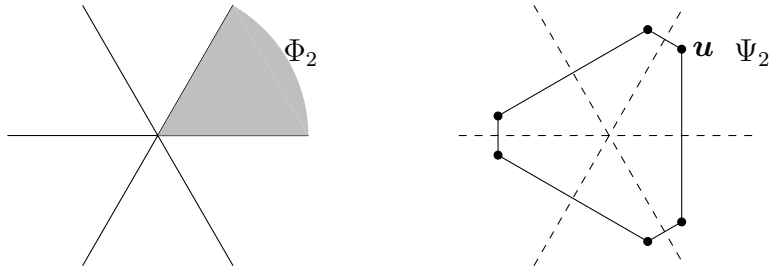
\end{ex}

The line segment introduced below will be our running example to illustrate the concepts as we introduce them.
\begin{ex}\label{ex:running_example}
Consider the braid arrangement $ \mathcal{A}_{3} $ and its corresponding reflection group $ \fS_{4} $.
Let $S: = \conv \left\{(1,2,6,8), (0,4,5,8)\right\}$ be a line segment in $\R^{4} $. One checks that $S \subseteq U^{(1,2,6,8)}_{3}$. 
The $\fS_4$ symmetrization $ \fS_{4}(S) $ of $S$ is the three dimensional polytope depicted in Figure \ref{fig:running}.

\begin{figure}[ht]
\centering
\input{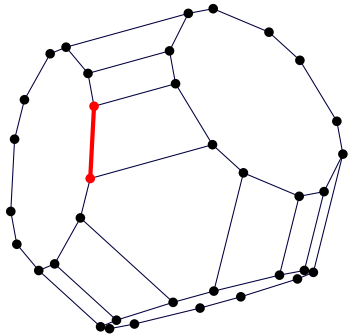}
\caption{Symmetrization of a line segment.}
\label{fig:running}
\end{figure}
\end{ex}

\begin{defn}\label{def:placed}
We say a polytope $P\subseteq U$ is \emph{$\Phi$-placed} if $P^\vee$ (see Equation \eqref{eq:Pvee}) and $\Phi^\circ$ have non-trivial intersection, that is, if there exists a $\bgamma \in \Phi^\circ$ such that the linear functional $\left\langle \bgamma,\cdot\right\rangle$ is constant on $P$.  
A full-dimensional polytope $P\subseteq U$ is not $\Phi$-placed, since $P^\vee = \{\mathbf{0}\}$.
\end{defn}

\begin{ex}\label{ex:running_placed}
The line segment $ S $ defined in Example \ref{ex:running_example} is $\Phi_3$-placed since the functional $(0, 1, 2, 3) \in \Phi_3^\circ$ is constant on the vertices of $S$ and hence is constant on the entire line segment $S$. 
\end{ex}

We now construct the collection of cones that are candidates for maximal cones in the normal fan of $\mathfrak{G}(P)$.

\begin{defn}
Suppose $P\subseteq U$ is a $\Phi$-placed polytope.
For each vertex $\v \in P$, we define
\[\kappa(\v,g)\defeq  g^{-1}\Phi\cap g^{-1}[\ncone(\v, P)].\]
We then define \[\KK_P\defeq \{\kappa(\v,g)~:~ g\in \mathfrak{G} \text{ and $\v$ is a vertex of }P\} \] to be the collection of these cones.
\end{defn}

Note that since $g$ is an isomorphism on $W,$ we have
\begin{equation}
\kappa(\v,g) = g^{-1}\Phi\cap g^{-1}[\ncone(\v, P)]=g^{-1}[\Phi\cap\ncone(\v, P)] 
\label{eq:kappa}
\end{equation}

\begin{lem}\label{lem:KKPdissect}
Let $P\subseteq U$ be a $\Phi$-placed polytope. 
The collection of cones $\KK_P$ is a pointed conic dissection of $W$. 
\end{lem}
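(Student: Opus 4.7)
The plan is to verify the three defining properties of a pointed conic dissection of $W$: each $\kappa(\v,g)$ is pointed and full-dimensional, the cones cover $W$, and distinct cones have disjoint interiors. Pointedness is immediate: the standing hypothesis is that $\Sigma(\cH)$ is pointed, so $\Phi$ is pointed; any subcone of $\Phi$---in particular $\Phi\cap\ncone(\v,P)$---is pointed (a subcone of a pointed cone is pointed), and applying the linear isomorphism $g^{-1}$ preserves pointedness.

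Full-dimensionality is the crux, and it is precisely where I would use the $\Phi$-placed hypothesis. The hypothesis furnishes a vector $\bgamma\in\Phi^\circ\cap P^\vee$. Since every element of $P^\vee$ is constant on $P$, we have $\bgamma\in\ncone(\v,P)$ for every vertex $\v$; but $\bgamma$ lies in the lineality space of $\ncone(\v,P)$ (because $P^\vee\subseteq\ncone(\v,P)\cap(-\ncone(\v,P))$), so $\bgamma$ is \emph{not} in $\ncone(\v,P)^\circ$ as soon as $P^\vee\neq\{\mathbf{0}\}$---which, for a $\Phi$-placed polytope, is the interesting case. To produce an interior witness I would perturb: pick any $\w_0\in\ncone(\v,P)^\circ$ (nonempty, since $\ncone(\v,P)$ is a full-dimensional cone in $W$ as a chamber of the complete normal fan $\Sigma_W(P)$), and set $\w_t=(1-t)\bgamma+t\w_0$. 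By the standard convexity fact that a convex combination of a boundary point and an interior point of a convex set is interior, $\w_t\in\ncone(\v,P)^\circ$ for all $0<t\le 1$; and since $\bgamma\in\Phi^\circ$ and $\Phi^\circ$ is open, $\w_t\in\Phi^\circ$ for all sufficiently small $t>0$. Hence $\Phi^\circ\cap\ncone(\v,P)^\circ\neq\emptyset$, so $\Phi\cap\ncone(\v,P)$---and therefore $\kappa(\v,g)=g^{-1}(\Phi\cap\ncone(\v,P))$---is full-dimensional in $W$.

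The covering property follows readily: since $\Sigma_W(P)$ is complete, $W=\bigcup_{\v\in\vert(P)} \ncone(\v,P)$, so $\Phi=\bigcup_\v(\Phi\cap\ncone(\v,P))$; applying all $g^{-1}$ and using Lemma \ref{lem:Gbij} (which gives $W=\bigcup_g g^{-1}\Phi$) yields $W=\bigcup_{g,\v}\kappa(\v,g)$. For disjointness of interiors, the full-dimensionality step gives $\kappa(\v,g)^\circ=g^{-1}(\Phi^\circ\cap\ncone(\v,P)^\circ)\subseteq (g^{-1}\Phi)^\circ$. If $g\neq g'$, then $(g^{-1}\Phi)^\circ\cap(g'^{-1}\Phi)^\circ=\emptyset$ by Lemma \ref{lem:Gbij}. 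If $g=g'$ but $\v\neq\v'$, then $\ncone(\v,P)^\circ\cap\ncone(\v',P)^\circ=\emptyset$ as the interiors of distinct chambers of $\Sigma_W(P)$. In either case $\kappa(\v,g)^\circ\cap\kappa(\v',g')^\circ=\emptyset$.

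The main obstacle, though not a deep one, is the full-dimensionality argument: the natural candidate $\bgamma$ for a common interior point actually sits on the boundary of every $\ncone(\v,P)$ when $P^\vee\neq\{\mathbf{0}\}$, so it cannot itself serve as an interior witness. The resolution is to exploit the openness of $\Phi^\circ$ (guaranteed by the hypothesis $\bgamma\in\Phi^\circ$, as opposed to merely $\bgamma\in\Phi$) to perturb slightly into $\ncone(\v,P)^\circ$ while remaining in $\Phi^\circ$. Everything else reduces to bookkeeping with Lemma \ref{lem:Gbij} and the standard fact that the normal fan of a polytope is complete.
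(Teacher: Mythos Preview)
Your proof is correct and follows essentially the same approach as the paper's. The only notable difference is in the full-dimensionality step: the paper observes that $\bgamma\in\Phi^\circ\cap\ncone(\v,P)$ (not necessarily in $\ncone(\v,P)^\circ$) and then invokes the fact that an intersection of two full-dimensional cones, one of whose interiors meets the other, is full-dimensional---so your perturbation to land in both interiors, while valid, is slightly more than is needed.
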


\begin{proof}
For any $g \in \mathfrak{G}$ and any vertex $\v$ of $P,$ the cone $\kappa(\v,g)$ is contained in the pointed cone $\Phi g$, and hence is pointed as well. 
We will verify that $\KK_P$ satisfies the properties of being a conic dissection.

We first show that for each vertex $\v$ of $P$ and each $g \in \mathfrak{G}$ we have that the cone $\kappa(\v,g)$ is full-dimensional in $W.$
However, since $\kappa(\v,g) = g^{-1}[\Phi\cap\ncone(\v, P)]$,
it is enough to show that $\Phi \cap \ncone(\v,P)$ is full-dimensional. 
Because $P$ is $\Phi$-placed, there exists $\bgamma \in \Phi^\circ$ such that $\left\langle \bgamma,\cdot\right\rangle$ is constant on $P$, so $\bgamma \in \ncone(\v,P)$. 
It follows that $\Phi^\circ \cap \ncone(\v,P)$ is nonempty.
This together with the fact that both $\Phi$ and $\ncone(\v,P)$ are full-dimensional cones in $W$ implies that $\Phi \cap \ncone(\v,P)$ is full-dimensional. Therefore, $\KK_P$ is a set of full-dimensional cones in $W.$

To finish the proof, we need to verify that $\KK_P$ is a conic dissection, so we need to verify two facts: (1) the union of all cones is the entire space and (2) that different cones have disjoint interiors. 
Note that chambers in a normal fan form a conic dissection. 
This fact and Proposition \ref{prop:g-perm} imply that $\{ g^{-1} \Phi : g \in \mathfrak{G}\}$ and $\{ \ncone(\v,P)  :  \text{$\v$ is a vertex of $P$}\}$ are both conic dissections of $W$. Using these together with the definition of $\kappa(\v,g)$, one checks that $\KK_P$ has the desired properties.
\end{proof}

We remark that Lemma \ref{lem:KKPdissect} is not true if $P$ is not $\Phi$-placed assumption, as we see in the following example.

\begin{ex}
Consider the braid arrangement $\mathcal{A}_2$ and its corresponding reflection group $\fS_3$. See Figure \ref{fig:placed}: the left side is $\cA_2$ in $W_2$ and the right side is $\cA_2^*$ in $U_2^\balpha$. 
Let $P$ be the line segment on the right of the figure, 
then $P^\vee\subseteq W$ is as drawn on the left of the figure. 

The $\fS_3$-symmetrization of $P$ is the hexagon shown on the right of the figure. Notice that one vertex $\v$ of the line segment $P$ is not a vertex of the symmetrization, so $\kappa(\v,g)$ is trivial. 
\begin{figure}[ht]
\begin{tikzpicture}
\begin{scope}[xshift=-3cm]
\draw (240:2cm)--(60:2cm) (180:2cm)--(0:2cm) (120:2cm)--(300:2cm);
\draw[fill=white!75!black, draw=none] (0:2cm) arc (0:60:2);
\draw[fill=white!75!black, draw=none](0:0cm) -- (0:2cm) -- (60:2cm) -- cycle;

\draw[ultra thick] (161.4:2cm)--(341.4:2cm);
\node[right] at (341.4:2cm) {$P^\vee$};
\node at (30:2.2cm) {$\Phi_2$};
\end{scope}

\begin{scope}[xshift=+3cm]
\draw (240:2cm)--(60:2cm) (180:2cm)--(0:2cm) (120:2cm)--(300:2cm);

\draw[fill=white!75!black, draw=none] (0:2cm) arc (0:60:2);
\draw[fill=white!75!black, draw=none](0:0cm) -- (0:2cm) -- (60:2cm) -- cycle;
\filldraw[black] (30:0.8cm) circle (1.5 pt);
\filldraw[black] (50:1.5cm) circle (1.5 pt);
\draw (30:0.8cm) -- (50:1.5cm);

\node[left] at (30:0.8cm) {$\v$};
\node[left] at (25:1.5cm) {$P$};
\node at (30:2.2cm) {$\Psi_2$};

\draw[dotted] (70:1.5cm) -- (170:1.5cm) -- (-170:1.5cm) -- (290:1.5cm) -- (-50:1.5cm) -- (50:1.5cm)--cycle;

\draw[fill=white] (70:1.5cm) circle (1.5 pt);
\draw[fill=white] (170:1.5cm) circle (1.5 pt);
\draw[fill=white] (-170:1.5cm) circle (1.5 pt);
\draw[fill=white] (290:1.5cm) circle (1.5 pt);
\draw[fill=white] (-50:1.5cm) circle (1.5 pt);

\draw[fill=white] (90:0.8cm) circle (1.5 pt);
\draw[fill=white] (150:0.8cm) circle (1.5 pt);
\draw[fill=white] (210:0.8cm) circle (1.5 pt);
\draw[fill=white] (270:0.8cm) circle (1.5 pt);
\draw[fill=white] (-30:0.8cm) circle (1.5 pt);
\end{scope}

\end{tikzpicture}
\caption{A segment  $P$ in $U_2^\balpha$ that is not $\Phi$-placed.}
\label{fig:placed}
\end{figure}
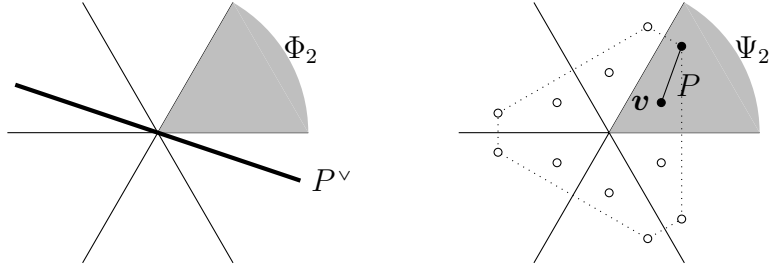
\end{ex}

\begin{defn}\label{def:appropriate}
Let $P\subseteq U$ be a  polytope.
We say $P$ is \emph {$\Psi$-appropriate} if $P\subseteq \Psi^\circ$. 
\end{defn}	

For example the line segment on the right of in Figure \ref{fig:placed} is $\Psi_{2}$-appropriate. 
Notice that to verify a polytope $P$ is $\Psi$-appropriate, it is enough to check that every vertex of $P$ is in $\Psi^\circ$.

\begin{ex}\label{ex:running_appropriate}
  The line segement $S \subseteq U_3^{(1,2,6,8)}$ defined in Example \ref{ex:running_example} is $\Psi_3$-appropriate since the entries in each vertex of $S$ are strictly increasing.
\end{ex}

We now present a generalization of Proposition \ref{prop:g-perm}.

\begin{thm}\label{thm:fundamental_cones}
Let $P$ be a $\Phi$-placed and $\Psi$-appropriate polytope with vertex set $\V$.
Then the following statements are true.
\begin{enumerate}
	\item \label{itm:fulldim} The vertex set of the symmetrization $\mathfrak{G}(P)$ is 
			\[ \mathfrak{G}\V \defeq   \left\{ \v g \ :\ (\v,g) \in  \V \times \mathfrak{G} \right\}.\]
		      \item \label{itm:vertexcone} For each $(\v,g) \in \V \times \mathfrak{G}$, we have $\ncone(\v g, \mathfrak{G}(P)) = \kappa(\v,g)$. (Recall that $\kappa(\v,g)$ is defined in \eqref{eq:kappa}.)
\end{enumerate}
Hence, $\mathfrak{G}(P)$ is a full-dimensional polytope in $U$.
\end{thm}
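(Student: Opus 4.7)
The plan is to leverage that $\KK_P$ is a pointed conic dissection of $W$ (Lemma \ref{lem:KKPdissect}) and match it cone-by-cone with the chambers of the normal fan of $\mathfrak{G}(P)$, which yields both (1) and (2) simultaneously. The core of the argument is the following \emph{unique-maximizer claim}: for each $(\v, g) \in \V \times \mathfrak{G}$ and each $\w \in \kappa(\v, g)^\circ$, the linear functional $\langle \w, \cdot \rangle$ attains its maximum over $\mathfrak{G}(P)$ uniquely at the point $\v g$. This immediately makes $\v g$ a vertex of $\mathfrak{G}(P)$ and places $\kappa(\v, g)^\circ$ inside $\ncone(\v g, \mathfrak{G}(P))^\circ$.

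To prove the claim, I first note that the proof of Lemma \ref{lem:KKPdissect} already shows that $\Phi \cap \ncone(\v, P)$ is full-dimensional, so $\kappa(\v, g)^\circ = g^{-1}(\Phi^\circ \cap \ncone(\v, P)^\circ)$ and hence $g\w \in \Phi^\circ \cap \ncone(\v, P)^\circ$. Any point of $\mathfrak{G}(P)$ is a convex combination of elements $\x h$ with $\x$ a vertex of $P$ and $h \in \mathfrak{G}$, so by linearity I only need to compare $\langle \w, \x h\rangle = \langle h\w, \x\rangle$ (using the dual-action formula \eqref{eq:dualaction}) with $\langle \w, \v g\rangle$. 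For fixed $\x$, Proposition \ref{prop:g-perm} applies to $\mathfrak{G}(\x)$ since $\Psi$-appropriateness gives $\x \in \Psi^\circ$, and $\w \in g^{-1}\Phi^\circ$ then singles out $\x g$ as the unique $\langle \w, \cdot \rangle$-maximizer on the orbit $\mathfrak{G}\x$; hence $\langle \w, \x h\rangle \le \langle \w, \x g\rangle = \langle g\w, \x\rangle$, with equality iff $h = g$. Because $g\w \in \ncone(\v, P)^\circ$, a second maximization gives $\langle g\w, \x\rangle \le \langle g\w, \v\rangle$ with equality iff $\x = \v$. Chaining the two inequalities proves the claim.

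Next I would establish that $(\v, g) \mapsto \v g$ is injective: if $\v g = \v' g'$, set $h = g g'^{-1}$, so $\v' = \v h$. Both $\v$ and $\v' = \v h$ lie in $\Psi^\circ$, while $\v h$ lies in $(\Psi h)^\circ = \Psi^\circ h$; by Lemma \ref{lem:Gbij}, distinct elements of $\mathfrak{G}$ correspond to distinct chambers of $\Sigma(\cH^*)$ with disjoint interiors, so $h$ must be the identity. Together with the unique-maximizer claim, this shows that $\mathfrak{G}\V$ is a set of $|\mathfrak{G}| \cdot |\V|$ distinct vertices of $\mathfrak{G}(P)$ with $\kappa(\v, g) \subseteq \ncone(\v g, \mathfrak{G}(P))$ for each $(\v, g)$.

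Finally I would upgrade these inclusions to identities using that both $\KK_P$ and the collection of normal cones at vertices of $\mathfrak{G}(P)$ dissect $W$. For any vertex $\y$ of $\mathfrak{G}(P)$, the open cone $\ncone(\y, \mathfrak{G}(P))^\circ$ meets $\kappa(\v, g)^\circ$ for some $(\v, g)$, and at a common interior point the unique-maximizer claim forces $\y = \v g$, yielding (1). For (2), any $\w' \in \ncone(\v g, \mathfrak{G}(P))^\circ$ lies in some $\kappa(\v', g')^\circ$ with $\v' g' = \v g$, and injectivity gives $(\v', g') = (\v, g)$, so $\w' \in \kappa(\v, g)$; taking closures produces the reverse inclusion. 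Full-dimensionality of $\mathfrak{G}(P)$ follows because each $\kappa(\v, g)$, sitting inside the pointed cone $g^{-1}\Phi$, is itself pointed, so $\{\0\}$ is a face of the normal fan. The main obstacle is organizing the unique-maximizer argument so that the two hypotheses play complementary roles: $\Phi$-placedness makes $\Phi \cap \ncone(\v, P)$ full-dimensional (so $\kappa(\v, g)$ has nonempty interior to work with), while $\Psi$-appropriateness is precisely what allows Proposition \ref{prop:g-perm} to be invoked at each vertex $\x$ of $P$ in order to control the $h$-dependence of the maximum.
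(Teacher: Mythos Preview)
Your argument is correct and follows essentially the same route as the paper: the heart of both proofs is the two-step chain of strict inequalities
\[
\langle \w,\x h\rangle \le \langle \w,\x g\rangle \le \langle \w,\v g\rangle,
\]
obtained by first applying Proposition~\ref{prop:g-perm} to the orbit $\mathfrak G(\x)$ (using $\Psi$-appropriateness) and then using $g\w\in\ncone(\v,P)^\circ$; the paper runs the same two steps in the opposite order. The only substantive difference is that the paper packages the passage from this strict inequality to statements (1) and (2) by quoting an external lemma (\cite[Lemma~2.4]{castillo2023permuto}), whereas you unpack that step explicitly via injectivity of $(\v,g)\mapsto \v g$ and a dissection-matching argument. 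One small wording fix: in your final paragraph, a point $\w'\in\ncone(\v g,\mathfrak G(P))^\circ$ need only lie in some \emph{closed} $\kappa(\v',g')$ (not necessarily its interior); since you already have $\kappa(\v',g')\subseteq\ncone(\v'g',\mathfrak G(P))$, this is enough to force $\v'g'=\v g$ and conclude.
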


\begin{proof}
Using Lemma 2.4 of \cite{castillo2023permuto}, the proof of this proposition is reduced to the following claim:
For any distinct pairs $(\v,g), (\v',g') \in \V \times \mathfrak{G}$, we have that
\begin{equation}\label{eq:max_combined}
\langle \w, \v g\rangle > \langle \w,\v' g'\rangle\qquad \forall \w\in \kappa^\circ(\v,g). 
\end{equation}
We prove \eqref{eq:max_combined} by introducing an intermediate product and showing that for all $\w\in \kappa^\circ(\v,g),$
\begin{equation}
\langle \w, \v g\rangle \ge \langle \w,\v'g\rangle \ge \langle \w,\v'g'\rangle,  
\label{equ:intermprod2}
\end{equation}
where the first equality holds if and only if $\v = \v'$ and the second equality holds if and only $g = g'$.
Let $\z \defeq g\w .$ Then
\begin{equation}
\z = g\w  \in g\kappa^\circ(\v,g)  = \Phi^\circ \cap \ncone^\circ(\v,P).
\label{eq:expressu}
\end{equation}
Replacing $\w$ with $g^{-1}\z$ and applying \eqref{eq:dualaction}, we see that \eqref{equ:intermprod2} is equivalent to
\begin{equation}
\langle \z, \v \rangle \ge \langle \z,\v'\rangle \ge \langle \z,  \v'g'g^{-1}\rangle.
\label{equ:intermprod3}
\end{equation}
First, 
because $\z$ is in $\ncone^\circ(\v,P),$ 
we conclude that the first inequality in \eqref{equ:intermprod3} holds and the equality hold if and only if $\v = \v'.$ 
Next, since $P$ is $\Psi$-appropriate, the vertex $\v' \in \Psi^\circ$ and thus by Proposition \ref{prop:g-perm}, we have $\ncone(\v',\mathfrak{G}(\v'))=\Phi.$
Hence, $\z \in \Phi^\circ =\ncone^\circ(\v',\mathfrak{G}(\v'))$, and we conclude that the second inequality in \eqref{equ:intermprod3} holds and the equality hold if and only if $g = g'.$ 
\end{proof}

\begin{prop}\label{prop:Pface}
Let $P$ be a $\Phi$-placed and $\Psi$-appropriate polytope. 
Then $P$ is a face of $\fG(P)$.
\end{prop}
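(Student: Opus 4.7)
The plan is to exhibit a linear functional witnessing $P$ as a face of $\fG(P)$. Since $P$ is $\Phi$-placed, by Definition \ref{def:placed} there exists $\bgamma \in \Phi^\circ$ such that $\langle \bgamma,\cdot\rangle$ is constant on $P$. I will show that the face of $\fG(P)$ on which $\bgamma$ attains its maximum is exactly $P$.

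First I would apply Theorem \ref{thm:fundamental_cones} to identify the vertex set of $\fG(P)$ as $\fG\V$, with $\ncone(\v g,\fG(P))=\kappa(\v,g)=g^{-1}[\Phi\cap\ncone(\v,P)]$. For every vertex $\v \in \V$ and $g = e$, we have $\bgamma \in \Phi^\circ \subseteq \Phi$ and also $\bgamma \in \ncone(\v,P)$ (the latter because $\langle \bgamma,\cdot\rangle$ is constant on $P$), so $\bgamma \in \Phi \cap \ncone(\v,P) = \kappa(\v,e) = \ncone(\v,\fG(P))$. Thus every vertex of $P$ lies in the face of $\fG(P)$ maximized by $\bgamma$.

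The heart of the argument is to show that no other vertex of $\fG(P)$ lies in this face, i.e.\ that $\bgamma \in \kappa(\v,g)$ forces $g = e$. If $\bgamma \in g^{-1}[\Phi\cap\ncone(\v,P)]$, then $g\bgamma \in \Phi$; on the other hand, since $\bgamma \in \Phi^\circ$ and $g$ acts as a linear isomorphism on $W$, we have $g\bgamma \in (g\Phi)^\circ$. By Lemma \ref{lem:Gbij} the chambers $\{g\Phi : g \in \fG\}$ of $\Sigma(\cH)$ have pairwise disjoint interiors, so $g\bgamma \in \Phi \cap (g\Phi)^\circ$ forces $g\Phi = \Phi$, and hence $g = e$. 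Consequently the face of $\fG(P)$ maximized by $\bgamma$ has vertex set exactly $\V$, so it equals $\conv(\V) = P$.

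The only delicate step is the argument above that $g = e$: it uses essentially that $\bgamma$ lies in the \emph{interior} of $\Phi$, not merely in $\Phi$ itself. Were $\bgamma$ permitted to sit on the boundary, $g\bgamma$ could lie on a shared wall of two chambers and the disjointness argument would fail; this is why the $\Phi$-placed hypothesis is stated using $\Phi^\circ$.
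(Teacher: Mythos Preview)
Your proof is correct and follows the same overall strategy as the paper: pick $\bgamma \in \Phi^\circ \cap P^\vee$ and show that the face of $\fG(P)$ on which $\bgamma$ is maximized is exactly $P$. The difference is in which tool you reach for to exclude the vertices $\v g$ with $g \neq e$. You invoke the full Theorem \ref{thm:fundamental_cones} to identify $\ncone(\v g,\fG(P))=\kappa(\v,g)$ and then argue via disjointness of chamber interiors that $\bgamma \in \kappa(\v,g)$ forces $g=e$. The paper instead appeals only to the more elementary Proposition \ref{prop:g-perm}: since each vertex $\v$ lies in $\Psi^\circ$ (by $\Psi$-appropriateness), one has $\ncone(\v,\fG(\v))=\Phi$, and hence $\bgamma \in \Phi^\circ$ gives the strict inequality $\langle \bgamma,\v\rangle > \langle \bgamma,\v g\rangle$ for every $g\neq e$ directly, without ever computing the normal cones of $\fG(P)$. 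Your route works fine but is slightly heavier in that it relies on Theorem \ref{thm:fundamental_cones}, whose proof already contains the paper's argument as a subcase; the paper's version is logically independent of that theorem.
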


\begin{proof}
Since $ P $ is $ \Phi $-placed, there exists $ \bgamma \in \Phi^\circ$ such that the induced linear functional $\left\langle \bgamma,\cdot\right\rangle$ is constant on $ P $.
Furthermore, since $ P $ is $ \Psi $-appropriate, we have $P \subseteq \Psi^\circ.$ By Proposition \ref{prop:g-perm} for every vertex $ \v \in P $, the normal cone of $\fG(\v)$ at $\v$ is $\Phi.$ Hence, given that $\bgamma \in \Phi^\circ,$ we conclude that $\left\langle \bgamma, \v \right\rangle > \left\langle \bgamma, \v g \right\rangle$ for any non-identity $g \in \fG$. 
This implies that the linear functional is maximized at all vertices of $ P $ over all other vertices of $ \fG(P) $. Thus, $P$ is a face of $\fG(P)$. 
\end{proof}

We finish this part by giving formal definitions for hybrid models arising from our constructions, following the discussion in the introduction.

\begin{defn}\label{defn:hybrid}
	Suppose $\u \in \Psi^\circ$ and $Q$ is a polytope contained in an affine space $U'$. If there exists an invertible affine transformation $T$ from $U'$ to $U$ such that $P:=T(Q)$ is $\Phi$-placed and $\Psi$-appropriate, and $\u \in P$, we say $\fG(P)$ is a \emph{hybrid} of the $\fG$-permutohedron $\fG(\u)$ and $Q$.

  Correspondingly, we say a poset $\cF$ is a \emph{hybrid} of the face poset of $\fG(\u)$ and the face poset of $Q$ (or equivalently the face poset of $P$), if it is isomorphic to the face poset of $\fG(P)$.
\end{defn}

\begin{rem}\label{rem:hybrid}
  It follows from Example \ref{ex:g-perm} that a poset $\cF$ is a \emph{hybrid} of the poset $\cO_d$ of ordered set partitions of $[d]$ and another poset $\cG$, if $\cG$ can be realized as a $\Phi_{d-1}$-placed and $\Psi_{d-1}$-appropriate polytope $P$ such that $\cF$ is isomorphic to the face poset of $\fG(P)$.
\end{rem}

\section{(Refined) Fundamental fan} \label{sec:fundamental}
By Theorem \ref{thm:fundamental_cones}, the intersections of the normal cones of a $\Phi$-placed and $\Psi$-appropriate polytope $P\subseteq U$ with the fundamental chamber $\Phi$ determine the normal fan of its symmetrization $\fG(P)$. 
This motivates us to define the combinatorial objects that will be studied in this section.

\subsection{Fundamental fan and determination questions}

Let $P$ be a $\Phi$-placed and $\Psi$-appropriate polytope $P$. 
The set of faces of the intersections $\{ \sigma \cap \Phi \ : \ \sigma \in \Sigma(P)\}$ form a (non-complete) fan. 
We call this fan the \emph{fundamental fan} of $P$, and denote it by $ \ffan(P) $. 
For each $i$, we let $\ffan^i(P)$ be the set of codimension $i$ cones in $\ffan(P)$.

Clearly, we have 
\begin{equation}
\label{eq:fundamental_to_complete}
\Sigma(\fG(P)) = \fG(\ffan(P)) \defeq \{ g \tau \ : \  \tau \in \ffan(P), g \in \fG\}.
\end{equation}

  For simplicity, we denote by $ \cZ(P) $ the face poset of the fundamental fan $ \ffan(P) $.

  \begin{ex}\label{ex:running_fan} 
We compute the fundamental fan of the line segment $S$ of Example \ref{ex:running_example}.
We already have verified in Examples \ref{ex:running_placed} and \ref{ex:running_appropriate} that it is $\Phi_3$-placed and $\Psi_3$-appropriate.

The normal fan $\Sigma(S)$ of $S$ is the fan determined by its perpendicular space $S^\vee$ (defined in \eqref{eq:Pvee}) which is a hyperplane in $W_3$.
Notice that the functionals $ (0,0,0,1) $ and $ (0,1,2,2) $ are both constant on $ S $. Thus, $S^\vee$ (which is $2$-dimensional) is spanned by these two functionals.
The fan $ \Sigma(S) $ consists of three cones: $ \sigma_C := S^{\vee} $, and the two connected components of $ W_3 \backslash \sigma_{C} $.
We denote the component containing the vector $ \f_{1} =(0,1,1,1)$ by $ \sigma_R $ and the other component by $ \sigma_L $.
A depiction of $ \Sigma(S) $ can be seen in Figure \ref{fig:normal_segment}.

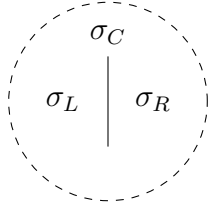
\begin{figure}
\centering
\begin{tikzpicture}[scale=0.6]
\draw (0,1)--(0,-1);
\node[above, ultra thick] at (0,1) {$ \sigma_C $};
\node at (1,0) {$ \sigma_R $};
\node at (-1,0) {$ \sigma_L $};
\draw[dashed] (0,0) circle (2.2 cm);
\node at (0,-3) {};
\end{tikzpicture}
\caption{A visual representation of the normal fan of a segment. The labels of the cones are for Left, Center, and Right.}
\label{fig:normal_segment}
\end{figure}

The fundamental fan $\ffan(S)$ of $S$ is determined by how $S^\vee$ splits the interior of the fundamental chamber $\Phi_3$, which is spanned by $\f_1=(0,1,1,1), \f_2= (0,0,1,1)$ and $\f_3=(0,0,0,1)$. 
The fan $\ffan(S)$ together with its face poset $\cZ(S)$ is depicted in Figure \ref{fig:running_fundamental}. 
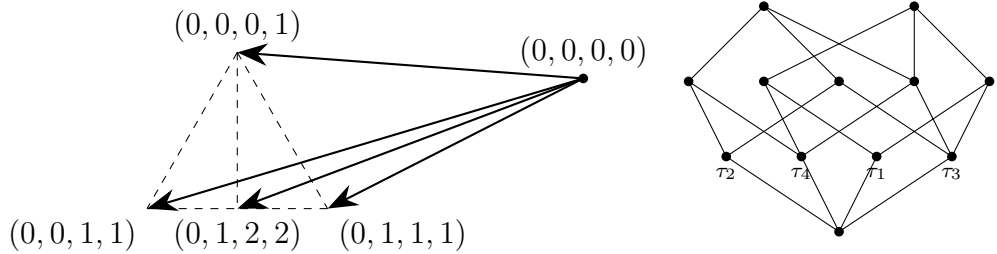
\begin{figure}[ht]
\centering
\begin{tikzpicture}

\begin{scope}[ xshift=-2cm,
x={(300:1 cm)},y={(60:1 cm)}, 
vertex/.style={inner sep=1pt,circle,draw=black,fill=black,thick,anchor=base},
fake/.style={inner sep=1pt,circle,draw=black,thick,anchor=base},
origin/.style={inner sep=1pt,circle,draw=black,thick,anchor=base},
ray/.style={inner sep=1pt,circle,draw=blue,fill=blue,thick,anchor=base},
reference/.style={inner sep=1pt,circle,draw=black,fill=black,thick,anchor=base},
edge/.style={color=purple, thick},
normal/.style={color=blue, dashed},
boundary/.style={color=black, dotted},
scale=0.4
]

\coordinate (012) at (1,1);
\coordinate (021) at (0,1);
\coordinate (102) at (1,0);
\coordinate (120) at (-1,0);
\coordinate (201) at (0,-1);
\coordinate (210) at (-1,-1);
\coordinate (003) at (2,1);
\coordinate (-122) at (1,2);
\coordinate (600) at (-2*1,-2*2);
\coordinate (060) at (-2*1,2*1);
\coordinate (006) at (2*2,2*1);
\coordinate (000) at (10,13);
\coordinate (330) at (-2,-1);
\coordinate (303) at (1,-1);
\coordinate (033) at (1,2);

\node[below left] at (600) {$(0,0,1,1)$};
\node[above] at (060) {$(0,0,0,1)$};
\node[below right] at (006) {$(0,1,1,1)$};
\node[below] at (303) {$(0,1,2,2)$};
\node[above] at (000) {$(0,0,0,0)$};

\node[vertex] at (000) {};
\draw[dashed] (060) -- (303);
\draw[dashed] (600) -- (303);
\draw[dashed] (060) -- (006);
\draw[dashed] (060) -- (600);
\draw[dashed] (006) -- (303);

\draw[-{Stealth[length=4mm, width=3mm]},thick, color=black] (000) -- (060) {};
\draw[-{Stealth[length=4mm, width=3mm]},thick, color=black] (000) --(600) {};
\draw[-{Stealth[length=4mm, width=3mm]},thick, color=black] (000) --(006) {};
\draw[-{Stealth[length=4mm, width=3mm]},thick, color=black] (000) --(303) {};
\end{scope}

\begin{scope}[xshift=4cm, yshift=-1cm,
	vertex/.style={inner sep=1pt,circle,draw=black,fill=black,thick,anchor=base},
	]

	\node[vertex] at (2,0) {};
	\node[vertex] at (0.5,1) {};
	\node[vertex] at (1.5,1) {};
	\node[vertex] at (2.5,1) {};
	\node[vertex] at (3.5,1) {};
	\node[vertex] at (0,2) {};
	\node[vertex] at (1,2) {};
	\node[vertex] at (2,2) {};
	\node[vertex] at (3,2) {};
	\node[vertex] at (4,2) {};
	\node[vertex] at (1,3) {};
	\node[vertex] at (3,3) {};

	\node[below] at (0.5,1) {\tiny $\tau_{2}$};
	\node[below] at (1.5,1) {\tiny $\tau_{4}$};
	\node[below] at (2.5,1) {\tiny $\tau_{1}$};
	\node[below] at (3.5,1) {\tiny $\tau_{3}$};

	\draw (2,0)--(0.5,1) (2,0)--(1.5,1) (2,0)--(2.5,1) (2,0)--(3.5,1);
	\draw (3.5,1)--(4,2)--(2.5,1) (3.5,1)--(3,2)--(1.5,1) (3.5,1)--(2,2)--(0.5,1);
	\draw (0,2)--(1.5,1)--(1,2) (0.5,1)--(0,2) (2.5,1)--(1,2);
	\draw (0,2)--(1,3) (1,2)--(3,3) (2,2)--(1,3) (3,2)--(1,3) (3,2)--(3,3) (4,2)--(3,3);
\end{scope}

\end{tikzpicture}
\caption{The fundamental fan of the line segment of the running example \ref{ex:running_example} together with its face poset.
We have labeled the four rays as $ \tau_{1} = \text{Cone}(\f_1), \tau_{2} = \text{Cone}(\f_2), \tau_{3} = \text{Cone}(\f_3), \tau_{4} = \text{Cone}(\f_2 + \f_1).  $
}
\label{fig:running_fundamental}
\end{figure}
\end{ex}

As shown in Figure \ref{fig:determine}, we know that combinatorics of $\ffan(P)$ ($\fG(P)$, resp.) determines the f-vector of $\ffan(P)$ ($\fG(P)$, resp.). It is natural to ask whether we have determination relation in any of the other arrow directions in the figure. 
Unfortunately, we have the following example. 
\begin{figure}
\begin{tikzcd}[cells={nodes={draw}}, row sep=2cm, column sep=3cm]
    \textup{combinatorics of $\ffan(P)$}
    \arrow[r, smalltext=determines] 
    \arrow[d, smalltext=?]
    \arrow[rd, smalltext=?]
    & \textup{f-vector of $\ffan(P)$}  
    \arrow[d, smalltext=?]
    \\
    \textup{combinatorics of $\fG(P)$} \arrow[r, smalltext=determines] & \textup{f-vector of $\fG(P)$}
\end{tikzcd}
\caption{Determination Questions}
\label{fig:determine}
\end{figure}
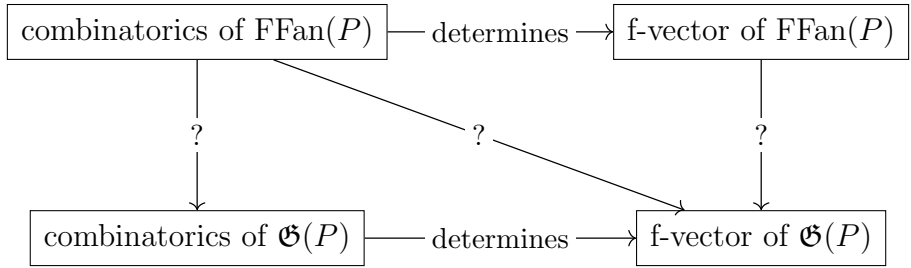

\begin{ex}\label{ex:fvec-counter}
Consider the reflection group $ \fS_5 $ of type $A$.
The two line segments in $ \mathbb{R}^{5} $ defined below are both $\Phi_4$-placed and $\Psi_4$-appropriate:
\[
X := \conv \left\{(2,3,7,8,10), (1,5,6,8,10)\right\}, Y := \conv \left\{(0,3,4,5,10), (0,1,6,7,8)\right\}.
\]
The fundamental fans of both of them are combinatorially isomorphic; they consist of two simplicial 4-cones with a simplicial 3-cone as a common face.
However the $f$-vectors of their respective symmetrizations are
\begin{equation*}
f(\fS_5(X))=(1, 240, 480, 290, 50, 1) \qquad \text{and} \qquad f(\fS_5(Y))=(1, 240, 480, 300, 60, 1).
\end{equation*}
\end{ex}
The above is an counterexample showing that combinatorics of $\ffan(P)$ does \emph{not} determine the f-vector of $\fG(P)$, i.e., the answer to the question mark on the diagonal arrow in Figure \ref{fig:determine} is no. 
Given the existing two determination relation in the figure, this implies that the answer to the questions marks on the two downward arrows are no as well. Given that, we need some extra information in addition to $\ffan(P)$ to determine the combinatorics of $\fG(P)$. 

Although the answers to all the determination questions asked in Figure \ref{fig:determine} are negative, we have a positive answer to a related determination question: $\cZ(P)$ determines $\cF(P)$. This will be the last result of this Section (see Proposition \ref{prop:ffan-det-fp}).

\subsection{Refined Fundamental Fan}
In this part, we will separate cones in $\ffan(P)$ into different sets according to which face of $\Phi$ they ``belong to''. 
Recall that we use $\Sigma(\Phi)$ to denote the fan that is induced by $\Phi$, which is the set of nonempty faces of $\Phi$.

\begin{defn}
  \label{def:carrier}
  Suppose $\tau \in \ffan(P)$ or more generally $\tau$ is convex subset of $\Phi$. 
  Let $\Phi_\tau$ be the inclusion-minimal face of $\Phi$ that contains $\tau$. 
\end{defn} 

\begin{lem}\label{lem:propertyPhi_tau}
  Let $\tau \in \ffan(P)$ or more generally $\tau$ is a convex subset of $\Phi$. Then $\Phi_\tau$ is the unique $\phi \in \Sigma(\Phi)$ satisfying $\tau \subseteq \phi$ and $\tau \cap \phi^\circ \neq \emptyset$. 
  Hence, $\Phi_\tau = \Phi$ if and only if $\tau \cap \Phi^\circ \neq \emptyset$.
\end{lem}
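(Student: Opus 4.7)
The plan is to establish two claims that together yield the lemma: existence (the face $\Phi_\tau$ itself satisfies both conditions) and uniqueness (any face $\phi$ of $\Phi$ satisfying both conditions coincides with $\Phi_\tau$). The ``Hence'' statement will then be an immediate consequence by taking $\phi = \Phi$.

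For existence, I need to show that $\tau \cap \Phi_\tau^\circ \neq \emptyset$. Pick any $\x$ in the relative interior $\tau^\circ$ of $\tau$, and let $F$ be the inclusion-minimal face of $\Phi$ containing $\x$, so that $\x \in F^\circ$. The key step is to prove $\tau \subseteq F$: for any $\y \in \tau$, because $\x \in \tau^\circ$ and $\tau$ is convex, we can extend the segment from $\y$ through $\x$ slightly and obtain $\z \in \tau$ and $\lambda \in (0,1)$ with $\x = \lambda \y + (1-\lambda)\z$; now $\y, \z \in \Phi$ and their convex combination $\x$ lies in $F^\circ$, and so the standard property of faces of polyhedra (namely, that $F^\circ$ meeting the interior of a segment in $\Phi$ forces both endpoints into $F$) gives $\y, \z \in F$. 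Thus $F$ is a face of $\Phi$ containing $\tau$, so by minimality $\Phi_\tau \subseteq F$. Combined with $F \subseteq \Phi_\tau$ (which holds because $\Phi_\tau$ is a face of $\Phi$ containing $\x$ and $F$ is the minimal such face), we get $F = \Phi_\tau$, and so $\x \in \Phi_\tau^\circ \cap \tau$.

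For uniqueness, suppose $\phi \in \Sigma(\Phi)$ satisfies $\tau \subseteq \phi$ and there exists $\y \in \tau \cap \phi^\circ$. From $\tau \subseteq \phi$ and the definition of $\Phi_\tau$ as the inclusion-minimal face of $\Phi$ containing $\tau$, we get $\Phi_\tau \subseteq \phi$. On the other hand, $\y \in \tau \subseteq \Phi_\tau$, so $\Phi_\tau$ is a face of $\Phi$ containing $\y$. Since $\y \in \phi^\circ$, another standard property of faces — that any face of $\Phi$ containing a point in the relative interior of $\phi$ must contain all of $\phi$ — forces $\phi \subseteq \Phi_\tau$. Hence $\phi = \Phi_\tau$.

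Finally, for the ``Hence'' clause: if $\tau \cap \Phi^\circ \neq \emptyset$, then $\phi = \Phi$ satisfies both conditions (trivially $\tau \subseteq \Phi$), so uniqueness gives $\Phi_\tau = \Phi$. Conversely, if $\Phi_\tau = \Phi$, existence gives $\tau \cap \Phi^\circ = \tau \cap \Phi_\tau^\circ \neq \emptyset$. The only subtle step in the whole argument is the existence claim, specifically invoking the ``face-interior'' property of polyhedra to conclude that both endpoints of a segment lie in a face whose relative interior meets the segment's interior; the rest is bookkeeping with minimality.
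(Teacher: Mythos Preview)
Your proof is correct and follows essentially the same approach as the paper's. The only stylistic difference is that for existence the paper argues by contradiction (if $\tau \cap \Phi_\tau^\circ = \emptyset$ then $\tau$ lies in the boundary of $\Phi_\tau$, hence in a proper facet, contradicting minimality), whereas you construct the witness point directly as any $\x \in \tau^\circ$ and identify its carrier face with $\Phi_\tau$; the uniqueness arguments are virtually identical.
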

\begin{proof}
  We already know that $\tau \subseteq \Phi_\tau.$
  Suppose $\tau \cap \Phi_\tau^\circ = \emptyset$. 
  Since $\tau \subseteq \Phi_\tau$, we must have that $\tau$ is contained in the boundary of $\Phi_\tau.$
  However, this implies that $\tau$ is contained in one of the facet of $\tau,$ contradicting the definition of $\Phi_\tau$.
  Hence, $\Phi_\tau$ satisfies the two required conditions. 
  Let $\phi$ be a face of $\Phi$ such that $\tau \subseteq \phi$ but $\phi \neq \Phi_\tau$. 
  By the definition of $\Phi_\tau$, we know that $\Phi_\tau$ is a proper face of $\phi$. 
  Thus, $\Phi_\tau$ has an empty intersection with $\phi^\circ$. 
  Therefore, $\tau$, which is contained in $\Phi_\tau$, is disjoint from $\phi^\circ$ as well. 
  Hence, the first conclusion of the lemma follows. 
  The second conclusion follows from the first conclusion together with the fact that $\tau \subseteq \Phi$.
\end{proof}

For each $\phi \in \Sigma(\Phi),$ we define
\begin{equation}
\label{eq:ff_phi}
  \ffan(P, \phi) \defeq \{ \tau \in \ffan(P) \ : \ \Phi_\tau = \phi \}.
\end{equation}

Recall that $f^i(\Sigma)$ is the number of codimension $i$ cones in a fan $\Sigma$. 
Although $\ffan(P, \phi)$ is not necessarily a fan, we extend the operator $f^i$ to them as well. 

Clearly, we obtain a disjoint decomposition of $\ffan(P):$
\begin{equation} 
\label{eq:decomposition}
\ffan(P)= \bigsqcup_{\phi \in \Sigma(\Phi)} \ffan(P, \phi).
\end{equation}
We call $\{ \ffan(P, \phi) \ : \ \phi \in \Sigma(\Phi)\}$ the \emph{refined fundamental fan of $P$}.

\begin{ex} 
  \label{ex:running_fan_decomposed}
  Consider the fundamental fan $\ffan(S)$ in Example \ref{ex:running_fan}. 
	Following the notation from Example \ref{ex:running_fan} we have that the two full-dimensional cones of  $\ffan(S)$ are $ \sigma_R \cap \Phi_3 $ and $ \sigma_L \cap \Phi_3 $.
  The elements in $\Sigma(\Phi_3)$ are described in Definition \ref{defn:ordered_set_partitions} and depicted in Figure \ref{fig:labels}.
  We have 
  \begin{align*}
					\ffan(S, \sigma_{(1,2,3,4)}) =& \{  \sigma_R \cap \Phi_3 ,  \sigma_L \cap \Phi_3 ,  \sigma_C \cap \Phi_3 \},\quad \ffan(S, \sigma_{(12,3,4)}) = \{ \sigma_{(12,3,4)} \}, \\
					\ffan(S, \sigma_{(1,2,34)}) =& \{ \cone(\f_2, \f_2+\f_1), \cone(\f_1, \f_2+\f_1), \cone(\f_2+\f_1) \}, \\
					\ffan(S, \sigma_{(1,23,4)}) =& \{ \sigma_{(1,23,4)} \}. 
    \end{align*}
\end{ex}

For each $\phi \in \Sigma(\Phi),$ we define $\cZ(P, \phi)$ to be the poset on $\ffan(P, \phi)$ ordered by inclusion. 
Equivalently, $\cZ(P, \phi)$ is the subposet induced by $\cZ(P)$ on $\ffan(P, \phi)$.

It is not hard to construct polytopes $P$ and $Q$ such that $\cZ(P)$ is isomorphic to $\cZ(Q)$ but $\cZ(P, \phi)$ is not isomorphic to $\cZ(Q, \phi)$ for some $\phi \in \Sigma(\Phi)$. Therefore, in general, $\cZ(P)$ (the combinatorics of $\ffan(P)$) does not determine every $\cZ(P, \phi)$ (the combinatorics of $\ffan(P,\phi)$).  
However, $\cZ(P)$ does determine $\cZ(P, \Phi)$ as we will show in Proposition \ref{prop:Zdetermine} below. 
We first derive a result characterizing the maximal elements of $\cZ(P, \phi)$ for any $\phi \in \Sigma(\Phi)$. 

\begin{lem} \label{lem:maximalelts}
For each $\phi \in \Sigma(\Phi),$ let $\ffan(P)|_\phi := \{ \tau \in \ffan(P) \ : \ \tau \subseteq \phi\}$ be the \emph{restriction of $\ffan(P)$ to $\phi$}.  
Then 
\begin{equation} 
\label{eq:decomposition_restrict}
\ffan(P)|_\phi = \bigsqcup_{\varphi: \text{ a face of $\phi$}} \ffan(P, \varphi). 
\end{equation}
Moreover, the maximal dimensional cones in $\ffan(P)|_\phi$ have the same dimension as $\phi$ and belong to $\ffan(P, \phi)$. Hence, they are the maximal elements of the poset $\cZ(P, \phi).$  
\end{lem}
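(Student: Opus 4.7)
The plan is to first establish that $\ffan(P)|_\phi$ is itself a polyhedral fan subdividing $\phi$, and then to use this structural fact, together with Lemma \ref{lem:propertyPhi_tau}, to derive both the disjoint decomposition and the statement about maximal elements.

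For the decomposition \eqref{eq:decomposition_restrict}, the key step is that $\Phi_\tau \preceq \phi$ whenever $\tau \in \ffan(P)|_\phi$. Both $\Phi_\tau$ and $\phi$ are faces of $\Phi$ containing $\tau$, so $\Phi_\tau \cap \phi$ is also a face of $\Phi$ containing $\tau$ and contained in $\Phi_\tau$; minimality of $\Phi_\tau$ then forces $\Phi_\tau \cap \phi = \Phi_\tau$. Combined with the disjoint decomposition \eqref{eq:decomposition}, this yields \eqref{eq:decomposition_restrict}.

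For the structural claim, since $\phi$ is a face of $\Phi$, there is a linear functional $\ell$ with $\ell \geq 0$ on $\Phi$ and $\phi = \{\w \in \Phi : \ell(\w) = 0\}$; then for any $\tau \in \ffan(P)$, the intersection $\tau \cap \phi$ is precisely the face of $\tau$ cut out by $\ell = 0$ and hence lies in $\ffan(P)|_\phi$. Thus $\phi = \bigcup_{\tau} (\tau \cap \phi)$ is covered by cones of $\ffan(P)|_\phi$, and the fan axioms are inherited from $\ffan(P)$, so $\ffan(P)|_\phi$ is a fan subdividing $\phi$. In particular, each inclusion-maximal cone of $\ffan(P)|_\phi$ has dimension $\dim \phi$, and I claim such a cone $\tau$ lies in $\ffan(P, \phi)$: indeed $\tau \subseteq \phi$ together with $\dim \tau = \dim \phi$ forces $\aff(\tau) = \aff(\phi)$, so $\tau^\circ$ is open in $\aff(\phi)$ and contained in $\phi$, which forces $\tau^\circ \subseteq \phi^\circ$ (the boundary of $\phi$ in $\aff(\phi)$ has empty interior). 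In particular $\tau \cap \phi^\circ \neq \emptyset$, and Lemma \ref{lem:propertyPhi_tau} yields $\Phi_\tau = \phi$.

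The statement about maximal elements of $\cZ(P,\phi)$ then follows automatically: every element of $\ffan(P, \phi) \subseteq \ffan(P)|_\phi$ sits inside some inclusion-maximal cone of $\ffan(P)|_\phi$, which belongs to $\ffan(P, \phi)$ by the previous step, while a maximal cone of $\ffan(P)|_\phi$ cannot be strictly contained in any cone of $\ffan(P)|_\phi$, let alone of $\ffan(P,\phi)$. The main obstacle I anticipate is the clean structural observation that $\ffan(P)|_\phi$ is a genuine subdivision of $\phi$, rather than merely a collection of subcones; once that is in place, Lemma \ref{lem:propertyPhi_tau} together with elementary dimension considerations handles the rest.
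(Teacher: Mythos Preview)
Your proof is correct and follows essentially the same route as the paper's: establish $\Phi_\tau \subseteq \phi \Leftrightarrow \tau \subseteq \phi$ for the decomposition, observe that $\ffan(P)|_\phi$ is a fan with support $\phi$, and deduce that top-dimensional cones have dimension $\dim\phi$ and hence lie in $\ffan(P,\phi)$. You supply more detail than the paper (the linear-functional argument for why $\tau\cap\phi$ is a face of $\tau$, and the $\aff(\tau)=\aff(\phi)$ step), whereas the paper simply asserts that $\ffan(P)|_\phi$ is a fan supported on $\phi$ and that $\dim\tau=\dim\phi$ forces $\Phi_\tau=\phi$.
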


\begin{proof} It follows the definition of $\Phi_\tau$ that $\Phi_\tau \subseteq \phi$ if and only if $\tau \subseteq \phi.$ Then Equation \eqref{eq:decomposition_restrict} follows.

  Next, one sees that $\ffan(P)|_\phi$ is a fan supported on $\phi$. Thus, the union of the maximal dimensional cones in $\ffan(P)|_\phi$ is equal to $\phi$. Hence, every maximal cone $\tau$ in $\ffan(P)|_\phi$ must satisfy that $\dim(\tau) = \dim(\phi)$, which implies that $\phi$ is the inclusion-minimal face of $\Phi$ that contains $\tau,$ so $\tau \in \ffan(P,\phi).$ Since $\ffan(P, \phi)$ is a subset of $\ffan(P)|_\phi,$ the maximal cones of $\ffan(P)|_\phi$ are precisely the maximal cones of $\ffan(P,\phi)$. Thus, the last conclusion follows. 
\end{proof}

It is clear that $\ffan(P)|_\phi$ is nonempty and thus has a nonempty set of maximal cones. We have an immediate consequence to Lemma \ref{lem:maximalelts}.  
\begin{cor}\label{cor:nonempty}
  For each $\phi \in \Sigma(\Phi)$, the set $ \ffan(P,\phi) $ (or equivalently, the poset $\cZ(P, \phi)$) is nonempty.
\end{cor}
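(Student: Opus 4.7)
The plan is to derive the corollary directly from Lemma \ref{lem:maximalelts} by checking that $\ffan(P)|_\phi$ is nonempty, since that lemma already identifies its maximal elements as cones of $\ffan(P,\phi)$. First I would observe that the origin $\{\mathbf{0}\}$ belongs to $\ffan(P)$: because $\Phi$ is pointed (being a chamber of the pointed fan $\Sigma(\cH)$), each intersection $\sigma \cap \Phi$ with $\sigma \in \Sigma(P)$ is a pointed cone and therefore has $\{\mathbf{0}\}$ as a face, which is then a face of $\ffan(P)$. Next, since $\phi$ is a nonempty face of the pointed cone $\Phi$, it also contains $\{\mathbf{0}\}$. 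Combining these gives $\{\mathbf{0}\} \in \ffan(P)|_\phi$, so the latter is a nonempty finite collection of cones ordered by inclusion and hence admits at least one maximal element $\tau$.

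By Lemma \ref{lem:maximalelts}, every maximal cone of $\ffan(P)|_\phi$ lies in $\ffan(P, \phi)$; therefore $\tau \in \ffan(P,\phi)$, which forces $\ffan(P,\phi)$ and the poset $\cZ(P,\phi)$ defined on it to be nonempty. Since the argument reduces to the trivial nonemptiness of $\ffan(P)|_\phi$ followed by an immediate appeal to the classification of its maximal cones, there is no substantive obstacle to overcome; the statement really is a direct corollary of the preceding lemma.
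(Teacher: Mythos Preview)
Your proof is correct and follows essentially the same approach as the paper: the paper simply asserts that $\ffan(P)|_\phi$ is nonempty and then invokes Lemma~\ref{lem:maximalelts}, while you supply the explicit witness $\{\mathbf{0}\}$ for that nonemptiness. The additional detail you provide is sound and fills in what the paper leaves as ``clear.''
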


The following is the main result of this subsection. 
\begin{prop}\label{prop:Zdetermine}
  Suppose $P$ is a $\Phi$-placed and $\Psi$-appropriate polytope. 
  Then we have the following statements giving a characterization for the set $\ffan(P) \setminus \ffan(P,\Phi)$. 
  \begin{enumerate}[label=(B\arabic*)]
    \item\label{itm:B1} If $\tau$ is a codimensional $1$ cone in $\ffan(P)$, then $\tau \not\in \ffan(P, \Phi)$ if and only if $\tau$ is contained in exactly one full-dimensional cone in $\ffan(P)$.
    \item\label{itm:B2} For any $\tau \in \ffan(P)$, we have that $\tau \not\in \ffan(P, \Phi)$ if and only if there exists a codimensional $1$ cone $\tau '\in \ffan(P)$ such that $\tau \subseteq \tau' \not\in \ffan(P,\Phi).$
  \end{enumerate}

  Hence, one can construct the poset $\cZ(P,\Phi)$ from the poset $\cZ(P)$ by removing all elements $z$ in $\cZ(P)$ satisfying one of the followings:
  \begin{enumerate}[label=(b\arabic*)]
    \item $z$ has corank $1$ in $\cZ(P)$ and is covered by exactly one maximal element; 
    \item $z$ is less than one of the elements described in (b1).
  \end{enumerate}

  Therefore, $\cZ(P)$ (the combinatorics of $\ffan(P)$) determines $\cZ(P, \Phi)$ (the combinatorics of $\ffan(P,\Phi)$).
\end{prop}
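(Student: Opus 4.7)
The plan is to first establish the geometric statements \ref{itm:B1} and \ref{itm:B2}; once these are in hand, the poset reformulation is essentially verbatim: every maximal element of $\cZ(P)$ is a full-dimensional cone $\sigma$ of $\ffan(P)$, for which $\Phi_\sigma = \Phi$ automatically, so only non-maximal elements of $\cZ(P)$ can lie outside $\ffan(P,\Phi)$, matching the description of (b1)--(b2).

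For \ref{itm:B1}, a codimension-$1$ cone $\tau$ of $\ffan(P)$ satisfies $\dim \tau = \dim W - 1 = \dim \Phi - 1$, so by Lemma \ref{lem:propertyPhi_tau} the carrier $\Phi_\tau$ is either $\Phi$ itself or a facet of $\Phi$. My strategy is to pick a point $p$ in the relative interior of $\tau$ and carry out a local analysis around $p$. If $p \in \Phi^\circ$, a full $W$-neighborhood of $p$ lies in the support $\Phi$ of $\ffan(P)$, and $\tau$ locally splits this neighborhood into two half-balls, each sitting inside a distinct full-dimensional cone of $\ffan(P)$; hence $\tau$ is a facet of exactly two such cones. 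If instead $p \not\in \Phi^\circ$, then $\tau$ lies in some facet $\phi$ of $\Phi$ with $\Phi_\tau = \phi$, and $\Phi$ is locally a half-space bounded by $\phi$ at $p$, so only one side of $\tau$ meets the support of $\ffan(P)$, forcing $\tau$ to be a facet of exactly one full-dimensional cone. This argument uses that $\ffan(P)$ is pure of dimension $\dim W$, which is immediate from it being the restriction of the complete fan $\Sigma(P)$ to the full-dimensional cone $\Phi$.

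For \ref{itm:B2}, the direction $(\Leftarrow)$ is straightforward: if $\tau \subseteq \tau'$ with $\tau' \not\in \ffan(P,\Phi)$ of codimension $1$, then by \ref{itm:B1} the cone $\tau'$ lies in a facet $\phi$ of $\Phi$, so $\tau \subseteq \phi$ forces $\Phi_\tau \subseteq \phi \neq \Phi$. For $(\Rightarrow)$, suppose $\tau \not\in \ffan(P,\Phi)$, so $\Phi_\tau$ is a proper face of $\Phi$ contained in some facet $\phi$. Then $\tau \in \ffan(P)|_\phi$, and Lemma \ref{lem:maximalelts} provides a maximal cone $\tau'$ of $\ffan(P)|_\phi$ with $\tau \subseteq \tau'$, $\dim \tau' = \dim \phi = \dim W - 1$, and $\tau' \in \ffan(P,\phi)$, giving exactly the required codimension-$1$ cone outside $\ffan(P,\Phi)$.

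The main obstacle is the local analysis in the $p \in \Phi^\circ$ subcase of \ref{itm:B1}: one must justify that in a pure polyhedral fan whose support is convex, a codimension-$1$ cone meeting the interior of the support is a facet of exactly two full-dimensional cones. This is a standard property of pure fans, but it needs to be articulated carefully, typically by separating the two sides of $\tau$ inside a small $W$-ball and observing that neither side is left uncovered by $\ffan(P)$. Once \ref{itm:B1} and \ref{itm:B2} are secured, (b1) and (b2) are direct poset translations, so the final statement that $\cZ(P)$ determines $\cZ(P,\Phi)$ follows immediately.
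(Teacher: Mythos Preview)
Your proposal is correct and follows essentially the same approach as the paper. The paper proves \ref{itm:B2} first via the decomposition $\ffan(P)\setminus\ffan(P,\Phi)=\bigcup_{\phi\text{ facet of }\Phi}\ffan(P)|_\phi$ and then \ref{itm:B1} by the equivalence $\tau\notin\ffan(P,\Phi)\iff\tau\cap\Phi^\circ=\emptyset$ from Lemma~\ref{lem:propertyPhi_tau}, asserting the one-or-two dichotomy tersely; you reverse the order and spell out the local argument for \ref{itm:B1} more explicitly, but the underlying ideas (and your use of Lemma~\ref{lem:maximalelts} for \ref{itm:B2}) match the paper's proof.
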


\begin{proof} 

  By the decompositions  \eqref{eq:decomposition} and \eqref{eq:decomposition_restrict}, we have that 
  \[ \ffan(P) \setminus \ffan(P, \Phi) = \bigcup_{\phi: \text{ a facet of $\Phi$}} \ffan(P)|_\phi.\]
  It follows that $\tau \not\in \ffan(P,\Phi)$ if and only if $\tau \in \ffan(P)|_\phi$ for some facet $\phi$ of $\Phi$. Note that when $\phi$ is a facet of $\Phi$, the maximal dimensional cones in $\ffan(P)|_\phi$ are codimensional $1$ cones in $\ffan(P)$. Hence, \ref{itm:B2} follows. 

  We now prove \ref{itm:B1}. By Lemma \ref{lem:propertyPhi_tau}, we have that $\tau \not\in \ffan(P,\Phi)$ if and only if $\tau \cap \Phi^\circ = \emptyset.$ If $\tau$ is a codimensional $1$ cone in $\ffan(P)$, then $\tau$ is contained in one or two full-dimensional cones in $\ffan(P)$, and $\tau \cap \Phi^\circ =\emptyset$ if and only if $\tau$ is contained in one full-dimensional cone of $\ffan(P)$. Thus, \ref{itm:B1} holds. 

  Clearly, the remaining conclusions of the proposition follow from \ref{itm:B1} and \ref{itm:B2}. 
\end{proof}

\subsection{Intersection interpretation of the refined fundamental fan} 
\label{ssec:omega}

We now give a more detailed description of the cones in $ \ffan(P, \phi) $ as intersection of cones in $ \Sigma(P) $ and $ \Sigma(\Phi) $
For any fan $\Sigma$, we let $\Sigma^\circ =  \{ \sigma^\circ \ : \ \sigma \in \Sigma\}$. 
Note that the (open) cones in $\Sigma^\circ$ gives a disjoint union of the support of $\Sigma.$ 
Since the support of $\Sigma(P)$ is $W$ and the support of $\Sigma(\Phi)$ is $\Phi,$ one sees that cones in 
\[ \{  \sigma^\circ \cap \phi^\circ \neq \emptyset \ : \ \sigma \in \Sigma(P), \phi \in \Sigma(\Phi) \} \]
are disjoint are their union is $\Phi$, which is the support of $\ffan(P)$. 

\begin{defn}\label{defn:OmegaP} Define $\Omega(P) := \{ (\sigma, \phi) \in \Sigma(P) \times \Sigma(\Phi) \ : \ \sigma^\circ \cap \phi^\circ \neq \emptyset \},$
and for each $\phi \in \Sigma(\Phi)$, define $\Omega(P, \phi) := \{ \sigma \in \Sigma(P) \ : \ \sigma^\circ \cap \phi^\circ \neq \emptyset \}.$
\end{defn}

\begin{lem}\label{lem:intersect}
  Let  $(\sigma, \phi)\in \Omega(P)$ and $\tau := \sigma \cap \phi$. 
	Then $\phi=\Phi_\tau$ is the inclusion-minimal face of $\Phi$ containing $\tau$ and $\sigma$ is the inclusion-minimal cone in $\Sigma(P)$ containing $\tau$.
\end{lem}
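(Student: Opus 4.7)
The plan is to pick a witness point inside $\sigma^\circ \cap \phi^\circ$ (which is nonempty precisely because $(\sigma,\phi)\in \Omega(P)$) and then exploit the defining property of a fan, namely that the relative interiors of its cones partition its support.

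First I would take any $w \in \sigma^\circ \cap \phi^\circ$, so that in particular $w \in \sigma \cap \phi = \tau$. This shows $w \in \tau \cap \phi^\circ$, so $\tau \cap \phi^\circ \neq \emptyset$. Since $\tau$ is convex (being an intersection of two convex cones) and $\tau \subseteq \phi \subseteq \Phi$, Lemma \ref{lem:propertyPhi_tau} applies and immediately gives $\Phi_\tau = \phi$, settling the first claim.

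For the second claim, let $\sigma' \in \Sigma(P)$ be any cone containing $\tau$. Then the witness point $w$ lies in $\sigma'$, and $\sigma'$ is the disjoint union of the relative interiors of its faces, so $w$ belongs to the relative interior of some face $\rho$ of $\sigma'$. In particular $\rho \in \Sigma(P)$. On the other hand, we also have $w \in \sigma^\circ$ with $\sigma \in \Sigma(P)$. Since $\Sigma(P)$ is a fan, the relative interiors of its cones are pairwise disjoint, so $\rho = \sigma$. Hence $\sigma$ is a face of $\sigma'$ and in particular $\sigma \subseteq \sigma'$, showing that $\sigma$ is the inclusion-minimal cone in $\Sigma(P)$ containing $\tau$.

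The argument is essentially a routine unwinding of definitions combined with the partition-of-support property of fans, so I do not anticipate any significant obstacle. The only subtlety worth flagging is that the second half relies entirely on the uniqueness of the cone of a fan whose relative interior contains a given point — a standard fact, but it is what makes the whole argument work once the right witness $w$ has been produced.
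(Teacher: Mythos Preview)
Your proof is correct and follows essentially the same approach as the paper: both use Lemma~\ref{lem:propertyPhi_tau} for the first claim after noting that any point of $\sigma^\circ\cap\phi^\circ$ witnesses $\tau\cap\phi^\circ\neq\emptyset$. For the second claim the paper simply says ``analogously,'' while you spell out the argument via the disjointness of relative interiors in a fan; this is exactly the content of the analogous statement, just made explicit.
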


\begin{proof}
Clearly, $\tau = \sigma \cap \phi$ satisfying $\tau \subseteq \phi$ and $\tau \cap \phi^\circ \neq \emptyset.$ Thus, it follows from Lemma \ref{lem:propertyPhi_tau} that $\phi=\Phi_\tau$. The conclusion about $\sigma$ can be proved analogously. 
\end{proof}

\begin{cor}\label{cor:bij1}
  The map $(\sigma, \phi) \mapsto \sigma \cap \phi$ is a bijection from $\Omega(P)$ to $\ffan(P)$.
Moreover, if $(\sigma, \phi) \in \Omega(P)$ and $\tau = \sigma \cap \phi,$ then $\tau^\circ = \sigma^\circ \cap \phi^\circ$.

Furthermore, for any $(\sigma, \phi), (\sigma', \phi') \in \Omega(P),$ we have
\begin{equation}\label{eq:Sigmaorderiff} \sigma \cap \phi \subseteq \sigma' \cap \phi' \quad \text{if and only if} \quad \sigma \subseteq \sigma' \text{ and } \phi \subseteq \phi'.
\end{equation}
\end{cor}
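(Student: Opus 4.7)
The plan is to reduce every assertion to two facts: Lemma~\ref{lem:intersect}, which recovers $\sigma$ and $\phi$ uniquely from $\tau=\sigma\cap\phi$, and the standard convex-geometry identity $(\sigma\cap\phi)^\circ=\sigma^\circ\cap\phi^\circ$ valid whenever the right-hand side is nonempty. The second fact will take care, essentially for free, of both the relative-interior claim and (after combining with injectivity) the order-preservation statement; the main content is therefore the surjectivity of the map.

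To see that the map is well defined into $\ffan(P)$, I would observe that $\sigma\cap\phi$ is the intersection of a face of $\sigma$ (namely $\sigma$ itself) with a face of $\Phi$ (namely $\phi$), and hence a face of $\sigma\cap\Phi\in\ffan(P)$. Injectivity is immediate from Lemma~\ref{lem:intersect}, which recovers $(\sigma,\phi)$ as the pair consisting of the inclusion-minimal cone of $\Sigma(P)$ containing $\tau$ and the inclusion-minimal face $\Phi_\tau$ of $\Phi$ containing $\tau$. For surjectivity, given $\tau\in\ffan(P)$, I would pick a point $x\in\tau^\circ$. Because the relative interiors of the cones in any fan partition its support, $x$ lies in the relative interior of a unique cone $\sigma\in\Sigma(P)$ and of a unique cone $\phi\in\Sigma(\Phi)$; this gives $(\sigma,\phi)\in\Omega(P)$. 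By connectedness of $\tau^\circ$ and the same partition property, all of $\tau^\circ$ lies in $\sigma^\circ\cap\phi^\circ$. Invoking the convex-geometry identity, $\tau^\circ\subseteq(\sigma\cap\phi)^\circ$; but $\sigma\cap\phi$ and $\tau$ are both cones of the fan $\ffan(P)$ whose relative interiors share the point $x$, so they must coincide.

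The relative-interior claim $\tau^\circ=\sigma^\circ\cap\phi^\circ$ is then just the convex-geometry identity applied to $(\sigma,\phi)\in\Omega(P)$. For the order assertion \eqref{eq:Sigmaorderiff}, the backward direction is immediate from monotonicity of intersection. For the forward direction, given $\sigma\cap\phi\subseteq\sigma'\cap\phi'$, I would pick $x\in\sigma^\circ\cap\phi^\circ$; since $x\in\sigma'$ and $x\in\sigma^\circ$, the partition property of $\Sigma(P)$ forces $\sigma$ to be a face of $\sigma'$, so $\sigma\subseteq\sigma'$, and symmetrically $\phi\subseteq\phi'$.

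The only potentially delicate point is the appeal to $(\sigma\cap\phi)^\circ=\sigma^\circ\cap\phi^\circ$, which requires $\sigma^\circ\cap\phi^\circ\ne\emptyset$; but that is precisely the defining condition of $\Omega(P)$, so no real obstacle arises. Everything else is bookkeeping around the standard fact that the relative interiors of the cones of a fan partition its support.
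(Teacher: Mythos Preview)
Your argument is correct and tracks the paper's approach closely: injectivity via Lemma~\ref{lem:intersect}, surjectivity and the interior identity via the partition of $\Phi$ by relative interiors, and the order statement via minimality in each fan. One small point: the connectedness step claiming $\tau^\circ\subseteq\sigma^\circ\cap\phi^\circ$ is not fully justified as written (the cells of the partition of $W$ by relative interiors of cones in $\Sigma(P)$ are not all open in $W$, so a bare connectedness argument does not apply), but it is also unnecessary---you already have $x\in(\sigma\cap\phi)^\circ\cap\tau^\circ$ with both $\sigma\cap\phi$ and $\tau$ cones of the fan $\ffan(P)$, and that alone forces $\sigma\cap\phi=\tau$. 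The paper packages the same comparison globally rather than pointwise, matching the two decompositions
\[
\bigsqcup_{(\sigma,\phi)\in\Omega(P)}\sigma^\circ\cap\phi^\circ \;=\;\Phi\;=\;\bigsqcup_{\tau\in\ffan(P)}\tau^\circ
\]
and using $\sigma^\circ\cap\phi^\circ\subseteq\tau^\circ$ together with injectivity to deduce both surjectivity and the equality of interiors in one stroke.
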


\begin{proof}
  It follows from Lemma \ref{lem:intersect} that the map is an injection. Next, we have two observations: (1) There are two disjoint unions of $\Phi$ arising from $\Omega(P)$ and $\ffan(P)$:
	\begin{equation}\label{eq:decompositions}
	\bigsqcup_{(\sigma, \phi)\in \Omega(P)} \sigma^\circ \cap \phi^\circ = \Phi = \bigsqcup_{\tau \in \ffan(P)} \tau^\circ.
\end{equation}
  (2) If $\sigma \cap \phi = \tau,$ then $\sigma^\circ \cap \phi^\circ \subseteq \tau^\circ.$ These two observations together imply the second conclusion and the surjectivity of the map. 

  Finally, we prove \eqref{eq:Sigmaorderiff}. The ``if'' direction is obvious. Suppose $\sigma \cap \phi \subseteq \sigma' \cap \phi'$. Let $\tau = \sigma \cap \phi$ and $\tau' =\sigma' \cap \phi'.$ Obviously, we have $\tau \subseteq \tau' \subseteq \sigma'.$ However, by Lemma \ref{lem:intersect}, $\sigma$ is $\sigma$ is the inclusion-minimal cone in $\Sigma(P)$ containing $\tau$. We conclude that $\sigma \subseteq \sigma'.$ We can similarly show that $\phi \subseteq \phi'$.
\end{proof}

We have the following immediately consequence from the above result:
\begin{cor}\label{cor:bij2}
  For each $\phi \in \Sigma(\Phi)$, the map $\sigma \mapsto \sigma \cap \phi$ is a bijection from $\Omega(P, \phi)$ to $\ffan(P, \phi)$.
\end{cor}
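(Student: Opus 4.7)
The plan is to derive this corollary as an immediate bookkeeping consequence of the preceding Corollary \ref{cor:bij1} together with Lemma \ref{lem:intersect}. The key point is that the second coordinate $\phi$ in a pair $(\sigma, \phi) \in \Omega(P)$ is completely determined by the intersection $\sigma \cap \phi$, via Lemma \ref{lem:intersect}, so fixing $\phi$ on the domain side automatically picks out the correct subset on the codomain side.

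First, I would note that restricting the bijection $(\sigma, \phi) \mapsto \sigma \cap \phi$ of Corollary \ref{cor:bij1} to pairs whose second coordinate equals a fixed $\phi \in \Sigma(\Phi)$ has domain exactly $\Omega(P, \phi)$ (by the definitions in \ref{defn:OmegaP}), and this restriction is automatically injective since the full map is. It remains to identify its image with $\ffan(P, \phi)$.

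For the image being contained in $\ffan(P, \phi)$: suppose $\sigma \in \Omega(P, \phi)$ and set $\tau = \sigma \cap \phi$. Then $(\sigma, \phi) \in \Omega(P)$, and Lemma \ref{lem:intersect} gives $\Phi_\tau = \phi$, so by definition \eqref{eq:ff_phi} we have $\tau \in \ffan(P, \phi)$. For surjectivity: given $\tau \in \ffan(P, \phi)$, apply Corollary \ref{cor:bij1} to obtain the unique pair $(\sigma, \phi') \in \Omega(P)$ with $\tau = \sigma \cap \phi'$; Lemma \ref{lem:intersect} forces $\phi' = \Phi_\tau = \phi$, so $\sigma \in \Omega(P, \phi)$ is the required preimage.

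Since the entire content has already been packaged into Lemma \ref{lem:intersect} (which says the pair $(\sigma, \phi)$ is recoverable from $\tau = \sigma \cap \phi$ as the inclusion-minimal cone in $\Sigma(P)$, resp.\ face of $\Phi$, containing $\tau$), I do not expect a genuine obstacle here; the only thing to be careful about is making sure both directions appeal to the correct defining properties of $\Omega(P, \phi)$ and $\ffan(P, \phi)$, rather than silently conflating $\sigma \cap \phi^\circ$ with $\sigma^\circ \cap \phi^\circ$.
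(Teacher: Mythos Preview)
Your argument is correct and matches the paper's approach exactly: the paper gives no explicit proof, simply declaring the corollary an ``immediate consequence'' of Corollary~\ref{cor:bij1}, and your write-up is precisely the natural unpacking of that claim via Lemma~\ref{lem:intersect}.
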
 

\begin{ex}

Continuing with refined fundamental fan in Example \ref{ex:running_fan_decomposed},
we have every positive dimensional cone of $ \ffan(P) $ expressed as the intersection of a cone in $ \Sigma(P) $ with a cone of $ \Sigma(\Phi_3) $ in Figure \ref{fig:omega}. 
On it, we have each cone from $ \ffan(P)$ expressed as the intersection of a cone from the normal fan of the segment
with a face of the fundamental chamber (with the notation of Figure \ref{fig:labels}). 
From the Figure we are only missing the vertex at the origin which is the intersection $ \sigma_C \cap \{\mathbf{0}\} $.
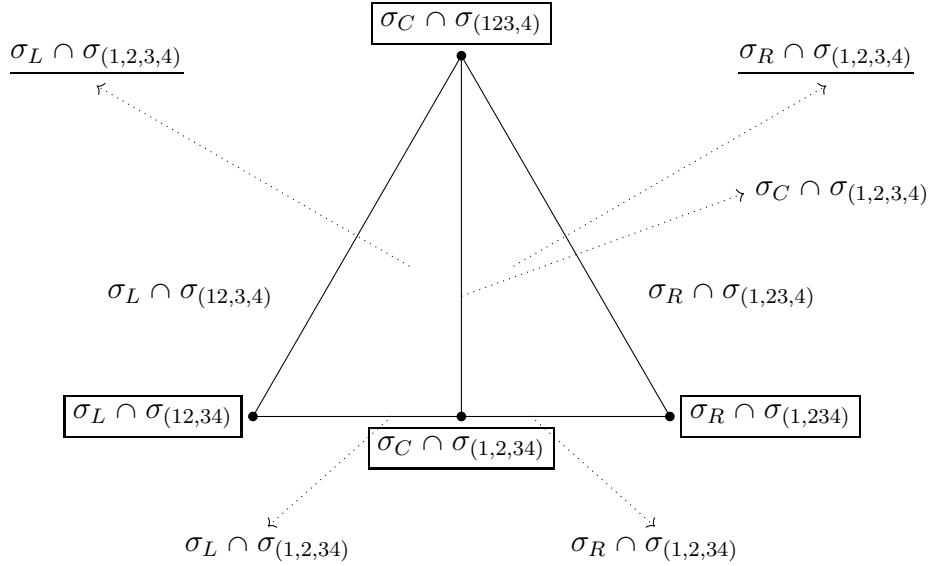
\begin{figure}[ht]
\begin{tikzpicture}[
vertex/.style={inner sep=1pt,circle,draw=black,fill=black,thick,anchor=base},
edge/.style={color=purple, thick},
normal/.style={color=black},
boundary/.style={color=black, dotted},
scale = 0.8
]					

\coordinate (0001) at (90:4);
\coordinate (0011) at (210:4);
\coordinate (0111) at (330:4);
\coordinate (0122) at (270:2);

\node[vertex] at (0001) {};
\node[vertex] at (0011) {};
\node[vertex] at (0111) {};
\node[vertex] at (0122) {};

\draw[normal] (0001)--(0011)--(0111)--cycle;
\draw[normal] (0001)--(0122);

\node at (0:4.5) {$\sigma_R \cap  \sigma_{ (1,23,4)}$};
\node at (0:-4.5) {$\sigma_L \cap   \sigma_{(12,3,4)}$};
\node[above] at (30:7) {$ \underline{\sigma_R \cap \sigma_{(1,2,3,4)}} $};
\node[above] at (150:7) {$ \underline{\sigma_L \cap \sigma_{(1,2,3,4)}} $};
\node[above] at (0001) {$ \boxed{\sigma_C \cap \sigma_{(123,4)}} $};
\node[left] at (0011) {$ \boxed{\sigma_L \cap \sigma_{(12,34)} } $};
\node[right] at (0111) {$ \boxed{\sigma_R \cap \sigma_{(1,234)}} $};
\node[below] at (0122) {$ \boxed{\sigma_C \cap \sigma_{(1,2,34)}} $};
\node[right] at (20:5) {$\sigma_C \cap   \sigma_{(1,2,3,4)}$};
\node[below] at (310:5) {$\sigma_R \cap   \sigma_{(1,2,34)}$};
\node[below] at (230:5) {$\sigma_L \cap   \sigma_{(1,2,34)}$};

\draw[dotted, ->] (300:2.3) to  (310:5);
\draw[dotted, ->] (240:2.3) to  (230:5);
\draw[dotted, ->] (0:0) to  (20:5);
\draw[dotted, ->] (30:1) to (30:7);
\draw[dotted, ->] (150:1) to (150:7);

\end{tikzpicture}
\caption{Each positive dimensional cone in the fundamental fan as an intersection of a normal cone of $ P $ with a face of the fundamental chamber $ \Phi $.}
\label{fig:omega}
\end{figure}
\end{ex}

Another consequence of Corollary \ref{cor:bij1} is a finer decomposition of $ \ffan(P,\phi) $.
\begin{cor}\label{cor:bij3}
  For each $\phi \in \Sigma(\Phi)$ we have the decomposition
	\begin{equation}\label{eq:finer_decomposition}
	\phi^{\circ} = \bigsqcup_{\tau \in \ffan(P,\phi)} \tau^{\circ}
	\end{equation}
\end{cor}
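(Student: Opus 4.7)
The plan is to obtain this decomposition as an immediate consequence of the preceding two corollaries together with the global partition of $\Phi$ recorded in \eqref{eq:decompositions}. The starting point will be the identity
\[
\Phi = \bigsqcup_{(\sigma, \phi') \in \Omega(P)} \sigma^\circ \cap (\phi')^\circ
\]
established in \eqref{eq:decompositions}, which I intend to intersect with $\phi^\circ$. Since $\phi \subseteq \Phi$, the left-hand side becomes $\phi^\circ$.

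For the right-hand side, I will use the fact that the relative interiors of the faces of $\Phi$ form a partition of $\Phi$, so $(\phi')^\circ \cap \phi^\circ = \emptyset$ whenever $\phi' \neq \phi$ in $\Sigma(\Phi)$, and equals $\phi^\circ$ when $\phi' = \phi$. Thus only those pairs $(\sigma, \phi') \in \Omega(P)$ with $\phi' = \phi$ contribute, and by definition these are exactly the $\sigma \in \Omega(P,\phi)$. This yields
\[
\phi^\circ = \bigsqcup_{\sigma \in \Omega(P,\phi)} \sigma^\circ \cap \phi^\circ.
\]

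To finish, I will substitute $\sigma^\circ \cap \phi^\circ = \tau^\circ$ with $\tau = \sigma \cap \phi$, which is the content of Corollary \ref{cor:bij1}, and then reindex the disjoint union via the bijection $\sigma \mapsto \sigma \cap \phi$ from $\Omega(P,\phi)$ to $\ffan(P,\phi)$ provided by Corollary \ref{cor:bij2}. This produces exactly the claimed decomposition.

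There is no real obstacle here; the statement is essentially a restriction of the global disjoint decomposition \eqref{eq:decompositions} to the single stratum $\phi^\circ$. The only point requiring a moment of care is the disjointness of relative interiors of distinct faces of $\Phi$, which is a standard property of fans and in particular of the fan $\Sigma(\Phi)$ of faces of the simplicial cone $\Phi$.
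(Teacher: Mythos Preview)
Your proposal is correct and follows essentially the same approach as the paper: both start from the global decomposition \eqref{eq:decompositions}, restrict it to a single $\phi^\circ$ (you by intersecting, the paper by comparing two partitions of $\Phi$ indexed over $\Sigma(\Phi)$), and then invoke Corollaries \ref{cor:bij1} and \ref{cor:bij2} to rewrite the pieces as $\tau^\circ$ for $\tau \in \ffan(P,\phi)$. The only cosmetic difference is that the paper phrases the restriction step as ``$\phi^\circ \supseteq \bigsqcup_{\sigma\in \Omega(P,\phi)} \sigma^\circ \cap \phi^\circ$, and since both sides give disjoint unions of $\Phi$ we have equality,'' whereas you argue directly via disjointness of the $(\phi')^\circ$.
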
 
\begin{proof}
  We consider disjoint unions of $\Phi$ arising from $\Sigma(\Phi)$ and $\Omega(P)$:
\[
  \bigsqcup_{\phi \in \Sigma(\Phi)} \phi^\circ = \Phi = \bigsqcup_{(\sigma, \phi)\in \Omega(P)} \sigma^\circ \cap \phi^\circ = \bigsqcup_{\phi \in \Sigma(\Phi)} \bigsqcup_{\sigma\in \Omega(P, \phi)} \sigma^\circ \cap \phi^\circ. 
\]
Clearly, for each $\phi \in\Sigma(\Phi)$, we must have that $\phi^\circ \supseteq \bigsqcup_{\sigma\in \Omega(P, \phi)} \sigma^\circ \cap \phi^\circ.$ Given that the two sides of above equation are both disjoint union of $\Phi,$ we conclude that
\[ \phi^\circ = \bigsqcup_{\sigma\in \Omega(P, \phi)} \sigma^\circ \cap \phi^\circ.\]
Then \eqref{eq:finer_decomposition} follows from Corollary \ref{cor:bij1}.
\end{proof}

Recall that $ \cZ(P, \phi) $ is the poset on $ \ffan(P, \phi) $ ordered by inclusion.
Given the decomposition presented in Corollary \ref{cor:bij3}, it is natural to ask whether $\cZ(P, \phi)$ has any interesting properties. 
In fact, we have the following result:

\begin{prop}\label{prop:ZPPhi}
  Suppose $P$ is a $\Phi$-placed and $\Psi$-appropriate polytope.
  Then the map $\sigma \mapsto \sigma \cap \Phi$ gives a codimension-preserving bijection from $\Sigma(P)$ to $\ffan(P,\Phi)$. 
  Therefore, $\cZ(P, \Phi)$ is isomorphic to the dual of $\cF(P)$.
\end{prop}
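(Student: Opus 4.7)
The plan is to specialize Corollary \ref{cor:bij2} to $\phi = \Phi$. That corollary already gives a bijection from $\Omega(P, \Phi)$ to $\ffan(P, \Phi)$, so the first claim will follow once I establish the key identity
\[ \Omega(P, \Phi) = \Sigma(P), \]
i.e., that every cone $\sigma \in \Sigma(P)$ satisfies $\sigma^\circ \cap \Phi^\circ \neq \emptyset$. To prove this, I use the $\Phi$-placed hypothesis to pick $\bgamma \in P^\vee \cap \Phi^\circ$. Because $\langle \bgamma, \cdot\rangle$ is constant on $P$, it is in particular maximized on every face of $P$, so $\bgamma$ lies in $\ncone(F, P)$ for every face $F$; hence $\bgamma \in \sigma$ for every $\sigma \in \Sigma(P)$. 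Now, for any $\sigma \in \Sigma(P)$, the relative interior $\sigma^\circ$ is nonempty, and picking any $\w \in \sigma^\circ$ and forming $(1-\epsilon)\bgamma + \epsilon \w$ for small $\epsilon > 0$ gives a point in $\sigma^\circ$ (by the standard fact that the segment joining a face point to an interior point lies in the interior) that is simultaneously in the open set $\Phi^\circ$ (by continuity, since $\bgamma \in \Phi^\circ$). This is the main step; everything else is routine given it.

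Once the bijection is established, codimension-preservation is a dimension count: if $\dim \sigma = k$, then $\sigma \cap \Phi \subseteq \sigma$ has dimension at most $k$, while it contains $\sigma^\circ \cap \Phi^\circ$, which is a nonempty relatively open subset of $\sigma^\circ$ and therefore has dimension exactly $k$. Both $\Sigma(P)$ and $\ffan(P)$ have ambient dimension equal to $\dim W$, since $\Sigma(P)$ is complete and $\ffan(P)$ has (by the same argument applied to the full-dimensional $\sigma = \ncone(\v, P)$ at a vertex) full-dimensional cones in its support; so preservation of dimension amounts to preservation of codimension.

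For the poset statement, I check that the bijection $\sigma \mapsto \sigma \cap \Phi$ is order-preserving. This is precisely the content of \eqref{eq:Sigmaorderiff} in Corollary \ref{cor:bij1} specialized to $\phi = \phi' = \Phi$: both inclusions collapse to $\sigma \subseteq \sigma'$. Therefore $\cZ(P, \Phi)$ is order-isomorphic to $\cF(\Sigma(P))$, and composing with the order-reversing bijection $F \mapsto \ncone(F, P)$ from Lemma \ref{lem:face&fan} identifies $\cZ(P, \Phi)$ with the dual of $\cF(P)$, as desired.
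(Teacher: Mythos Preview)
Your proof is correct and follows essentially the same approach as the paper: both establish $\Omega(P,\Phi)=\Sigma(P)$ by using the $\Phi$-placed hypothesis to find $\bgamma\in\sigma\cap\Phi^\circ$ and then upgrading to $\sigma^\circ\cap\Phi^\circ\neq\emptyset$, after which Corollary~\ref{cor:bij2} finishes. The only cosmetic difference is that the paper isolates the upgrade step as a separate auxiliary lemma (Lemma~\ref{lem:aux}, proved via an open-ball argument) whereas you do it inline via the convex-combination fact; you also spell out the codimension-preservation and order-preservation arguments that the paper leaves implicit.
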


We need the following auxilliary lemma to prove the above proposition. 
\begin{lem} 
\label{lem:aux}
If $ \gamma_{1}, \gamma_{2} $ are two polyhedral cones with $ \gamma_{1}\cap \gamma_{2}^{\circ} \neq \emptyset $ and $ \gamma_{2} $ full-dimensional,
then we have that $  \gamma_{1}^{\circ} \cap \gamma_{2}^{\circ} \neq \emptyset  $.
\end{lem}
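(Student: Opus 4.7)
The plan is to exploit the fact that $\gamma_2$ is full-dimensional in the ambient space, so its relative interior $\gamma_2^\circ$ is actually an open set. First I would pick any point $x \in \gamma_1 \cap \gamma_2^\circ$, which exists by hypothesis, and any point $y \in \gamma_1^\circ$, which exists since any non-empty convex set has non-empty relative interior. The goal is then to perturb slightly from $x$ toward $y$ and show that the perturbed point lies in both interiors.

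More precisely, for $t \in (0,1]$ I would consider the convex combination $z_t := (1-t)x + ty$. The key step on the $\gamma_1$ side is the standard line-segment principle for relative interiors of convex sets: if $x \in \gamma_1$ and $y \in \gamma_1^\circ$, then $z_t \in \gamma_1^\circ$ for every $t \in (0,1]$. On the $\gamma_2$ side, because $\gamma_2$ is full-dimensional, $\gamma_2^\circ$ is open in the ambient vector space $W$; since $x \in \gamma_2^\circ$, the continuous path $t \mapsto z_t$ starts inside an open set, so $z_t \in \gamma_2^\circ$ for all sufficiently small $t > 0$.

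Choosing any such small $t$ produces the desired point in $\gamma_1^\circ \cap \gamma_2^\circ$. No step should present a real obstacle; the only subtlety worth noting explicitly is why $z_t$ stays in $\gamma_2^\circ$, and this rests entirely on the full-dimensionality assumption, which upgrades the relative interior to a topological interior and makes the continuity argument work.
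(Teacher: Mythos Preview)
Your argument is correct. Both your proof and the paper's hinge on the same observation: since $\gamma_2$ is full-dimensional, $\gamma_2^\circ$ is open in the ambient space, so one can perturb a point $x \in \gamma_1 \cap \gamma_2^\circ$ slightly and remain in $\gamma_2^\circ$. The difference is only in how the perturbation into $\gamma_1^\circ$ is arranged. The paper takes an open ball $B \subseteq \gamma_2^\circ$ around $x$ and observes that, since $x$ lies in the closure of $\gamma_1^\circ$, the ball must meet $\gamma_1^\circ$. You instead pick an auxiliary $y \in \gamma_1^\circ$ and invoke the line-segment principle to get $z_t = (1-t)x + ty \in \gamma_1^\circ$ for all $t \in (0,1]$, then use openness of $\gamma_2^\circ$ to keep $z_t$ there for small $t$. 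Your version is slightly more explicit and self-contained, while the paper's is marginally shorter; neither offers a real advantage over the other.
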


\begin{proof}
Let $ p \in  \gamma_{1}\cap \gamma_{2}^{\circ}  $ and let's assume that $ p \notin \gamma_{1}^{\circ} $.
Then $ p \in \partial \gamma_{1} $.
Consider a small open ball $ B $ centered at $ p $ and contained in $ \gamma_{2}^{\circ} $.
But then $ B \cap \gamma_{1}^{\circ} \neq \emptyset $ and choosing any point $ q $ in the intersection finishes the proof.
\end{proof}

We remark that the conclusion of Lemma \ref{lem:aux} is false without $ \gamma_{2} $ being full-dimensional as the cones 
$ \gamma_{1} =  \text{Cone} \{ (1,1,0), (0,0,1) \}$ and $  \gamma_{2} = \text{Cone} \{ (1,0,0), (0,1,0) \}  $ in $ \mathbb{R}^{3} $ show.

\begin{proof}[Proof of Proposition \ref{prop:ZPPhi}]
We claim that $ \Omega(P, \Phi) = \Sigma(P) $ or equivalently that the relative interior of every normal cone of $ P $ intersects $ \Phi^{\circ} $.
Since $ P $ is $ \Phi $-placed (Definition \ref{def:placed}) we have that $ P^{\vee} \cap \Phi^{\circ} \neq \emptyset $.
For every $ \sigma \in \Sigma(P) $ we have that $ P^\vee \subset \partial \sigma $, hence $ \sigma \cap \Phi^{\circ} \neq \emptyset $.
Then the claim follows Lemma \ref{lem:aux}, and the conclusions of this lemma follow from the claim and Corollary \ref{cor:bij2}.
\end{proof}

Intrigued by Proposition \ref{prop:ZPPhi}, we looked at some small examples we have, and asked the following question:

\begin{ques}
	Is it true that $ \cZ(P,\phi) $ is isomorphic to (the dual of) face poset of a face of $ P $ for all $ \phi \in \Sigma(\Phi) $?
\end{ques}

We finish this section by stating a consequence of Propositions \ref{prop:Zdetermine} and \ref{prop:ZPPhi}.

\begin{prop}\label{prop:ffan-det-fp}
  Suppose $P$ is a $\Phi$-placed and $\Psi$-appropriate polytope. Then $\cZ(P)$ (the combinatorics of $\ffan(P)$) determines $\cF(P)$ (the combinatorics of $P$).
\end{prop}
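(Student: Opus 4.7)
The plan is to simply chain together the two preceding propositions, since together they already package the entire argument.

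First, I would invoke Proposition \ref{prop:Zdetermine}, which gives a purely combinatorial recipe for extracting the subposet $\cZ(P,\Phi)$ from $\cZ(P)$: identify the corank-$1$ elements of $\cZ(P)$ that are covered by exactly one maximal element, remove them and everything they cover (that is not also covered by a retained corank-$1$ element), and what remains is $\cZ(P,\Phi)$. Importantly, this procedure is intrinsic to the abstract poset $\cZ(P)$ and does not require knowledge of which cone is $\Phi$; the characterizations \ref{itm:B1} and \ref{itm:B2} ensure that the elements to be deleted are precisely those whose corresponding cones fail to meet $\Phi^\circ$.

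Next, I would apply Proposition \ref{prop:ZPPhi}, which asserts that the map $\sigma \mapsto \sigma \cap \Phi$ is a codimension-preserving bijection from $\Sigma(P)$ to $\ffan(P,\Phi)$, so that $\cZ(P,\Phi)$ is isomorphic to the dual of the face poset $\cF(P)$. Dualizing the poset $\cZ(P,\Phi)$ recovers $\cF(P)$ up to isomorphism, and dualization is again a purely combinatorial operation on the abstract poset. Chaining the two steps, the combinatorial type of $\cZ(P)$ determines that of $\cZ(P,\Phi)$, which in turn determines that of $\cF(P)$.

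I do not anticipate any real obstacle beyond making sure the pointer between the two propositions is stated cleanly: the output of Proposition \ref{prop:Zdetermine} is literally the input of Proposition \ref{prop:ZPPhi}. The only technical nuance worth mentioning is that the hypothesis ``$P$ is $\Phi$-placed and $\Psi$-appropriate'' is needed for both propositions (it guarantees $P^\vee \cap \Phi^\circ \neq \emptyset$, which drives the key claim $\Omega(P,\Phi) = \Sigma(P)$ used in Proposition \ref{prop:ZPPhi}), so the same standing hypothesis suffices here. The proof can therefore be written in a few lines as a formal combination of these two results.
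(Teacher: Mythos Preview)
Your proposal is correct and matches the paper's approach exactly: the paper presents this proposition explicitly as ``a consequence of Propositions \ref{prop:Zdetermine} and \ref{prop:ZPPhi}'' with no further argument, and you have simply spelled out how those two results chain together.
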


\section{Symmetrizing posets}
\label{sec:symm-poset}

In the last section, we discussed that given a $\Phi$-placed and $\Psi$-appropriate polytope $P$, the poset $\cZ(P)$ (which captures the combinatorics of the fundamental fan $\ffan(P)$ of $P$) does not determine the face poset $\cF(\fG(P))$ of $\fG(P)$ (which captures the combinatorics of $\fG(P)$). As a result, we introduced the refined fundamental fan of $P$. 
In this section, we will introduce a special kind of poset denoted by $\cR(P)$ to capture the combinatorics of the refined fundamental fan of $P$, and show in Theorem \ref{thm:symposet} that the symmetrization of $\cR(P)$ recovers $\cF(\fG(P))$. 	
	We prove this result by analyzing the symmetrization process on each face of the fundamental chamber.
  We finish this section with a discussion on the $f$-vector of $\fG(P)$.

\subsection{\texorpdfstring{$\Sigma$}{Sigma}-posets and symmetrization}
One way to capture the combinatorics of the refined fundamental fan is to associate each $\tau$ in the face poset $\cZ(P)$ of the fundamental fan $\ffan(P)$ with the cone $\phi=\Phi_\tau$. 
We will make this idea more formal. 

\begin{defn}\label{defn:Sigma-poset}

  Let $\Sigma$ be a fan. 
  We say a poset $\cR$ is a \emph{$\Sigma$-poset} if the underlying set is a subset of $\cF \times \Sigma$ for some poset $\cF$. (Note that if $\cF$ is an antichain, then $\cF$ is just a set without ordering.) 
  Furthermore, we say $\cR$ is \emph{order-consistent}, if for every $(\tau_1, \sigma_1), (\tau_2, \sigma_2) \in \cR$ with $\tau_1 \le \tau_2$, we must have $\sigma_1 \le \sigma_2$.

	We say two $\Sigma$-posets $\cR$ and $\cR'$ are \emph{isomorphic as $\Sigma$-posets} if there is a poset isomorphism $f$ from $\cR$ to $\cR'$ that preserves the second entries, that is for every $(s,\sigma) \in \cR$, if $f(\tau,\sigma)=(\tau',\sigma')$ then $\sigma=\sigma'$. 

	Given a poset $\cF$, and a map $\lambda: \cF \to \Sigma$, we define $\cF^\lambda$ to be the $ \Sigma $-poset on $\{ \tau, \lambda(\tau) \}_{\tau \in \cF}$ ordered by $(\tau, \lambda(\tau)) \le (\tau', \lambda(\tau'))$ if and only if $\tau \le_{\cF} \tau'$. 
	\end{defn}
It is clear that $\cF^\lambda$ is a $\Sigma$-poset that is isomorphic to $\cF$ as posets. 
Moreover, $\cF^\lambda$ is order-consistent if and only if $\lambda$ is order-preserving. 
The combinatorics of the refined fundamental fan of $ P $ can be expressed via a $ \Sigma(\Phi) $-poset via the map $\tau \mapsto \Phi_\tau$. 

\begin{defn}\label{defn:carr_map} 
  Let $\carr$ be the map from all convex subsets of $\Phi$ to $\Sigma(\Phi)$ that maps $\tau$ to $\Phi_\tau.$ We call $\carr$ the \emph{carrier map} for $\Phi$.

For convenience, we abuse notation and use $\carr$ to denote the restriction of $\carr$ to any set of convex subsets of $\Phi$, e.g., the restriction of $\carr$ to a fan supported on $\Phi$.  
\end{defn}

\begin{defn}
  \label{def:refined_phi_fan}
Let $ P $ be a $\Phi$-placed and $\Psi$-appropriate polytope.
We define the \emph{face poset of the refined fundamental fan of $P$}, denoted by $\cR(P)$, to be the $\Sigma(\Phi)$-poset $\cZ(P)^\carr$.

\end{defn}
Hence, $\cR(P)$ is isomorphic to $\cZ(P)$ as posets, but it contains more information encoded in the carrier map $\carr$.

  It is not true that every $\Sigma$-poset can be written as $\cF^\lambda$. 
  However, almost all the $\Sigma(\Phi)$-posets considered in this paper can be expressed as $\cF^\carr$. 
  Below we include a basic property of $\Sigma(\Phi)$-posets of this form. 
  \begin{lem}\label{lem:orderconsist}
  Let $\cF$ be a poset on a set of convex subsets of $\Phi$ ordered by inclusion. Then $\carr$ is an order-preserving map from $\cF$ to $\Sigma(\Phi)$. Hence, $\cF^\carr$ is order-consistent. 

  In particular, for a $\Phi$-placed and $\Psi$-appropriate polytope $P$, the face poset $\cR(P)$ of its refined fundamental fan is order-consistent. 
\end{lem}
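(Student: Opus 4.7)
The plan is to verify the order-preserving property of $\carr$ directly from the definition of $\Phi_\tau$ as the inclusion-minimal face of $\Phi$ containing $\tau$, and then apply Definition \ref{defn:Sigma-poset} to conclude order-consistency of $\cF^\carr$.

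First, I would take two elements $\tau_1, \tau_2 \in \cF$ with $\tau_1 \le_\cF \tau_2$. By hypothesis on $\cF$, this means $\tau_1 \subseteq \tau_2$. Since $\Phi_{\tau_2} = \carr(\tau_2)$ is a face of $\Phi$ that contains $\tau_2$, it also contains $\tau_1$. Now $\Phi_{\tau_1} = \carr(\tau_1)$ is by definition (Definition \ref{def:carrier}) the inclusion-minimal face of $\Phi$ containing $\tau_1$, so $\Phi_{\tau_1} \subseteq \Phi_{\tau_2}$. Since $\Sigma(\Phi)$ is ordered by inclusion, this says $\carr(\tau_1) \le \carr(\tau_2)$ in $\Sigma(\Phi)$. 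Hence $\carr$ is order-preserving.

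Next, I would unwind the definition of $\cF^\carr$: its elements are the pairs $(\tau, \carr(\tau))$ for $\tau \in \cF$, and $(\tau_1, \carr(\tau_1)) \le (\tau_2, \carr(\tau_2))$ if and only if $\tau_1 \le_\cF \tau_2$. When the latter holds, the previous paragraph gives $\carr(\tau_1) \le \carr(\tau_2)$, which is exactly the order-consistency condition from Definition \ref{defn:Sigma-poset}.

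For the final assertion, I would observe that $\cZ(P)$ is by definition the face poset of the fan $\ffan(P)$, whose elements are convex subsets of $\Phi$ (being intersections of cones of $\Sigma(P)$ with $\Phi$) and whose order is inclusion. Thus $\cZ(P)$ satisfies the hypotheses of the first part of the lemma, and $\cR(P) = \cZ(P)^\carr$ is order-consistent by what was just shown.

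There is really no obstacle here: the proof is a one-line consequence of the minimality property characterizing $\Phi_\tau$. The only thing to be careful about is to cite Definition \ref{def:carrier} and Definition \ref{defn:Sigma-poset} cleanly, and to note explicitly that $\cZ(P)$ fits the framework of the general statement so that the ``in particular'' clause is immediate rather than requiring a separate argument.
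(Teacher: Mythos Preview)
Your proof is correct and follows exactly the same approach as the paper, which simply notes that order-preservation is immediate from the definition of $\carr(\tau)$ as the inclusion-minimal face of $\Phi$ containing $\tau$. Your write-up just makes the one-line argument explicit.
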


\begin{proof}
  The conclusion that $\carr$ is order-preserving following directly from the definition of $\carr(\tau)$ is the inclusion-minimal face of $\Phi$ that contains $\tau$.
\end{proof}

We now define the $ \fS $-symmetrization of an arbitrary $ \Sigma(\Phi) $-poset.

\begin{defn} \label{defn:symmetrize-poset}
      Let $\cR$ be a $\Sigma(\Phi)$-poset. The $ \fG $-symmetrization of $ \cR $ is the $\Sigma(\cH)$-poset $ \fG\cR $ whose underlying set is
      \[\{ (\tau,  \eta)  \ : \ (\tau, g^{-1}\eta) \in \cR, \text{ for some $g \in \fG$} \},\] 
	and with partial order defined by $(\tau_1, \eta) \le (\tau_2, \eta)$ if and only if there exists $g \in \fG$ such that
	$(\tau_1, g^{-1} \eta) \le_\cR (\tau_2, g^{-1} \eta)$.
\end{defn}

It is clear that if $\cR$ and $\cR'$ are two isomorphic $\Sigma(\Phi)$-posets. Then their $\fG$-symmetrizations are isomorphic as posets or as $\Sigma(\cH)$-posets. 

\begin{thm}\label{thm:symposet}
  Let $ P $ be a $\Phi$-placed and $\Psi$-appropriate polytope. 
  Then the $ \fG $-symmetrization $\fG\cR(P)$ of $ \cR(P) $ is isomorphic to the face poset of $\Sigma(\fG(P)).$ 
	Therefore, the face poset of $\fG(P)$ is isomorphic to the dual of $\fG\cR(P)$.
\end{thm}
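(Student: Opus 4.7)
The plan is to construct an explicit poset isomorphism $\Theta\colon \fG\cR(P) \to \cF(\Sigma(\fG(P)))$. Given $(\tau, \eta) \in \fG\cR(P)$, the definition of the underlying set supplies some $g \in \fG$ with $g^{-1}\eta = \carr(\tau) = \Phi_\tau$, and I set $\Theta(\tau,\eta) := g\tau$. Once $\Theta$ is shown to be a well-defined poset isomorphism, the second statement of the theorem follows from Lemma~\ref{lem:face&fan} applied to $\fG(P)$, which is full-dimensional by Theorem~\ref{thm:fundamental_cones}.

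The key technical ingredient behind every verification step is the \emph{strict fundamental domain} property of $\Phi$ for the $\fG$-action on $W$: every $\fG$-orbit meets $\Phi$ in a unique point, and hence any $h \in \fG$ mapping a subset of $\Phi$ back into $\Phi$ must fix that subset pointwise. In particular, the set-wise stabilizer of any face $\phi$ of $\Phi$ coincides with its pointwise stabilizer, and thus fixes every subset of $\phi$ pointwise. This is a classical fact about finite reflection groups and I would isolate it as a standalone preliminary lemma before the proof proper.

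With this in hand, well-definedness of $\Theta$ is immediate: if $g_1\Phi_\tau = g_2\Phi_\tau$, then $g_2^{-1}g_1$ stabilizes $\Phi_\tau$ set-wise, hence pointwise, and therefore fixes $\tau \subseteq \Phi_\tau$, giving $g_1\tau = g_2\tau$. Surjectivity follows directly from~\eqref{eq:fundamental_to_complete}: every $\zeta \in \Sigma(\fG(P))$ has the form $g\tau$ for some $g \in \fG$ and $\tau \in \ffan(P)$, so $(\tau, g\Phi_\tau)$ is the required preimage. For injectivity and the reverse direction of order preservation, I would suppose $g_1\tau_1 \subseteq g_2\tau_2$ with $\tau_1, \tau_2 \in \ffan(P)$; then $g_2^{-1}g_1\tau_1 \subseteq \tau_2 \subseteq \Phi$, and applying the fundamental domain property to a relative interior point of $\tau_1$ forces $g_2^{-1}g_1\tau_1 = \tau_1$. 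Thus $\tau_1 \subseteq \tau_2$, and $g_2^{-1}g_1$ fixes $\Phi_{\tau_1}$ pointwise, so $\eta_1 = g_1\Phi_{\tau_1} = g_2\Phi_{\tau_1}$. Setting $g := g_2$ then simultaneously witnesses $g\Phi_{\tau_i} = \eta_i$ for $i=1,2$ and $\tau_1 \le_{\cZ(P)} \tau_2$, which is precisely what the definition of the order on $\fG\cR(P)$ requires. The forward direction of order preservation is then trivial: a single witnessing $g$ yields $\Theta(\tau_i,\eta_i) = g\tau_i$, and linearity of $g$ preserves the inclusion $\tau_1 \subseteq \tau_2$.

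The main obstacle is the careful and repeated invocation of the fundamental domain property; it is the same underlying principle that guarantees well-definedness, injectivity, and the reverse implication of order preservation. Stating the stabilizer fact cleanly once as a preliminary lemma, and then applying it uniformly at each step, should keep the proof compact and avoid fragmenting it into three parallel but nearly identical calculations.
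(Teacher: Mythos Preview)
Your proposal is correct and follows essentially the same approach as the paper. The paper constructs the inverse map $f\colon \Sigma(\fG(P)) \to \fG\cR(P)$, sending $g\tau$ to $(\tau, g\Phi_\tau)$, but this is a cosmetic difference; the key input in both arguments is exactly the strict fundamental domain property and the identity $\fG_\tau = \fG_{\Phi_\tau}$, which the paper has already isolated as Lemma~\ref{lem:stabilizer} and Corollary~\ref{cor:stabilizer}, so your proposed preliminary lemma is already in place.
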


The following lemma is a more explicit description of $\fG\cR(P)$.
It follows directly from the definitions.

\begin{lem}\label{lem:1stonGR}
The $ \fG $-symmetrization $\fG\cR(P)$ of $ \cR(P) $ is the poset whose underlying set is
\[\{ (\tau,  g \kappa(\tau))  \ : \ \tau \in \ffan(P), g \in \fG \},\] 
and with partial order defined by $(\tau_1, g_1 \kappa(\tau_1)) \le (\tau_2, g_2 \kappa(\tau_2))$ if and only if $\tau_1 \subseteq \tau_2$ and there exists $g \in \fG$ such that
	\begin{equation}
	  g^{-1} g_1 \kappa(\tau_1) = \kappa(\tau_1) \text{ and } g^{-1} g_2 \kappa(\tau_2) = \kappa(\tau_2). 
	  \label{eq:Phi_tau_stab}
	\end{equation}
\end{lem}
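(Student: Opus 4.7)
The plan is to directly unpack Definition \ref{defn:symmetrize-poset} in the special case $\cR = \cR(P) = \cZ(P)^\carr$. The crucial fact that makes this possible is that, by the construction $\cF^\lambda$ in Definition \ref{defn:Sigma-poset}, a pair $(\tau,\sigma)$ belongs to $\cZ(P)^\carr$ precisely when $\sigma = \carr(\tau) = \kappa(\tau)$; in other words, the second coordinate of any element of $\cR(P)$ is determined by its first coordinate.

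First I would identify the underlying set of $\fG\cR(P)$. By Definition \ref{defn:symmetrize-poset}, a pair $(\tau, \eta)$ lies in $\fG\cR(P)$ iff $(\tau, g^{-1}\eta) \in \cR(P)$ for some $g \in \fG$; by the observation above, this rewrites as $g^{-1}\eta = \kappa(\tau)$, equivalently $\eta = g\kappa(\tau)$. This immediately gives the set $\{(\tau, g\kappa(\tau)) : \tau \in \ffan(P),\, g \in \fG\}$ as claimed.

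Next I would unpack the order relation. Take two elements $(\tau_i, g_i\kappa(\tau_i))$ for $i = 1, 2$. Again by Definition \ref{defn:symmetrize-poset}, they are comparable in $\fG\cR(P)$ iff there is a single $g \in \fG$ for which both $(\tau_1, g^{-1}g_1\kappa(\tau_1))$ and $(\tau_2, g^{-1}g_2\kappa(\tau_2))$ lie in $\cR(P)$ and are comparable there. Membership in $\cR(P)$ forces $g^{-1}g_i\kappa(\tau_i) = \kappa(\tau_i)$ for $i = 1, 2$, which is exactly the stabilizer condition \eqref{eq:Phi_tau_stab}. Once these membership conditions hold, comparability in $\cR(P)$ reduces, by the construction of $\cF^\lambda$, to the order on first coordinates, i.e.\ $\tau_1 \le_{\cZ(P)} \tau_2$; since $\cZ(P)$ is the face poset of $\ffan(P)$ ordered by inclusion, this is the same as $\tau_1 \subseteq \tau_2$.

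There is no essential obstacle here, as the argument is purely definitional bookkeeping; the only subtlety worth emphasizing is that the representatives $g_1$ and $g_2$ recording a given pair in $\fG\cR(P)$ are not unique (two choices differ by an element of the stabilizer of the corresponding $\kappa(\tau_i)$), and the order relation must be stated so as to be well defined on pairs rather than on the chosen representatives. This is precisely what \eqref{eq:Phi_tau_stab} encodes: the existence of a single $g$ compatible with both $g_1$ and $g_2$ modulo the respective stabilizers.
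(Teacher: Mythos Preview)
Your proposal is correct and takes essentially the same approach as the paper, which simply states that the lemma ``follows directly from the definitions.'' You have made explicit the two unpacking steps (underlying set and order relation) that the paper leaves implicit, and your observation that the second coordinate in $\cR(P)=\cZ(P)^\carr$ is forced to equal $\kappa(\tau)$ is exactly the key point.
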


We need to develop more results before proving Theorem \ref{thm:symposet}. 
Before that, we finish this subsection with a result that connecting $\cR(P)$ with $\Omega(P)$ disscussed in the last section.

\begin{lem}\label{lem:OmegaPiso}
  We can consider $\Omega(P)$ as a $\Sigma(\Phi)$-poset by defining 
  \begin{equation}\label{eq:OmegaPorder} (\sigma, \phi) \le (\sigma', \phi') \text{ in $\Omega(P)$ \quad if and only if \quad} \sigma \subseteq \sigma' \text{ and } \phi \subseteq \phi'.
\end{equation}
The map $(\sigma, \phi) \mapsto (\sigma \cap \phi, \phi)$ gives a $\Sigma(\Phi)$-poset isomorphism from $\Omega(P)$ to $\cR(P)$. 
\end{lem}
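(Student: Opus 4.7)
The plan is that essentially all of the work has already been done in Section 4, so the proof will consist of assembling Corollary \ref{cor:bij1} and Lemma \ref{lem:intersect} into the required $\Sigma(\Phi)$-poset statement. First I would confirm that the relation defined in \eqref{eq:OmegaPorder} really is a partial order on $\Omega(P) \subseteq \Sigma(P) \times \Sigma(\Phi)$; reflexivity, antisymmetry, and transitivity follow immediately from the fact that $\subseteq$ is a partial order on each of $\Sigma(P)$ and $\Sigma(\Phi)$. This realizes $\Omega(P)$ as a $\Sigma(\Phi)$-poset in the sense of Definition \ref{defn:Sigma-poset}, with the first-coordinate poset taken to be $\Sigma(P)$.

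Next I would introduce the candidate map $\Theta \colon \Omega(P) \to \cR(P)$, $\Theta(\sigma,\phi) = (\sigma \cap \phi, \phi)$, and verify that it is well-defined. For any $(\sigma,\phi) \in \Omega(P)$, Corollary \ref{cor:bij1} tells us that $\sigma \cap \phi \in \ffan(P)$, and Lemma \ref{lem:intersect} gives $\phi = \Phi_{\sigma \cap \phi} = \carr(\sigma \cap \phi)$. Hence $\Theta(\sigma,\phi) = (\sigma \cap \phi, \carr(\sigma \cap \phi))$, which is precisely an element of $\cR(P) = \cZ(P)^\carr$; and $\Theta$ preserves the second coordinate by construction, as required for a $\Sigma(\Phi)$-poset morphism.

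To check that $\Theta$ is a bijection, I would invoke Corollary \ref{cor:bij1}, which states that $(\sigma,\phi) \mapsto \sigma \cap \phi$ is a bijection $\Omega(P) \to \ffan(P)$. Since elements of $\cR(P)$ are pairs $(\tau, \Phi_\tau)$ in which the second coordinate is determined by the first, this bijection immediately lifts to a bijection between $\Omega(P)$ and $\cR(P)$. Finally, order preservation in both directions is the content of \eqref{eq:Sigmaorderiff} in Corollary \ref{cor:bij1}: $(\sigma_1,\phi_1) \le (\sigma_2,\phi_2)$ in $\Omega(P)$ iff $\sigma_1 \subseteq \sigma_2$ and $\phi_1 \subseteq \phi_2$ iff $\sigma_1 \cap \phi_1 \subseteq \sigma_2 \cap \phi_2$, which is exactly the order in $\cZ(P)$, and hence in $\cR(P)$.

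There is no substantial obstacle here; the only care needed is bookkeeping to match the definition of $\cR(P) = \cZ(P)^\carr$ (Definition \ref{def:refined_phi_fan}) with the output of $\Theta$, and to record that preservation of second coordinates makes the poset isomorphism a $\Sigma(\Phi)$-poset isomorphism in the sense of Definition \ref{defn:Sigma-poset}.
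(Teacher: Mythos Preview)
Your proposal is correct and follows essentially the same approach as the paper, which simply says the result follows directly from Lemma \ref{lem:intersect} and Corollaries \ref{cor:bij1} and \ref{cor:bij2}. You have unpacked exactly this: Lemma \ref{lem:intersect} gives $\phi=\carr(\sigma\cap\phi)$ so that $\Theta$ lands in $\cR(P)$, Corollary \ref{cor:bij1} gives bijectivity and the order equivalence \eqref{eq:Sigmaorderiff}, and the second-coordinate preservation is immediate.
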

\begin{proof}
  It follows directly from Lemma \ref{lem:intersect}, Corollaries \ref{cor:bij1} and \ref{cor:bij2}.
\end{proof}

\begin{rem}
  The poset $\Omega(P)$ considered in the above lemma is an example of $\Sigma(\Phi)$-poset that cannot be expressed as $\cF^\lambda$ in general. 
	Indeed, for most of examples we consider such as Example \ref{ex:running_fan_decomposed}, the first entries $\sigma$ appeaing in elements of $\Omega(P)$ are \emph{not} distinct, and hence do not form a poset. 
	Although it is sometimes handy to describe $\cR(P)$ using $\Omega(P)$, for the discussion of symmetrization of posets, the representation of $\cR(P)$ seems to be the right choice. 
\end{rem} 

\subsection{ Refined analysis } 
In order to prove Theorem \ref{thm:symposet}, we first will rewrite \eqref{eq:fundamental_to_complete} as a disjoint sum of elements in order to understand which cones appearing in $\Sigma(\fG(P))$ better. 
We start with reviewing some group-theoretic concepts and a result. 

\begin{defn}\label{defn:stb}
  Let $\sigma$ be a cone (or a subset) in $W$. 
	We say $g \in \fG$ \emph{stabilizes} $\sigma$ if $g \sigma = \sigma.$ 
	The \emph{stabilizer of of $\sigma$ (with respect to $\fG$)} is $\fG_\sigma:= \{ g \in \fG \ : \ g \sigma = \sigma \}$, which is the subgroup of $\fG$ that consists all elements that stabilize $\sigma.$ 
  The \emph{orbit of $\sigma$ (in $\fG$)} is $\fG \sigma := \{ g \sigma \ : \ g \in \fG\}$. 
\end{defn}

It is a basic group theory result that there is a bijection between left cosets of the stabilizer $\fG_\tau$ and the orbit $\fG \cdot \tau$. More precisely: 
\begin{lem}\label{lem:sta-orb}
  Suppose $\{g_1, \dots, g_\ell\}$ is a set of representatives of left cosets of $\fG_\tau,$ that is, $\ds \fG = \bigsqcup_{i=1}^\ell g_i \fG_\tau$. Then $\fG \tau$ consists of $g_1 \tau, \dots, g_\ell \tau$.
Therefore, $|\fG_\tau| \cdot |\fG \tau| = |\fG|.$ \end{lem}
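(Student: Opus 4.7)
The plan is to exhibit a bijection between the set of left cosets $\{g_1 \fG_\tau, \dots, g_\ell \fG_\tau\}$ and the orbit $\fG\tau$ by mapping the coset $g_i \fG_\tau$ to the element $g_i \tau$. Once this is done, the counting identity $|\fG_\tau| \cdot |\fG\tau| = |\fG|$ will follow immediately from Lagrange's theorem, since $\ell = |\fG|/|\fG_\tau|$.

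First I would verify that the map is well-defined: if $g_i \fG_\tau = g_j \fG_\tau$, then $g_j^{-1} g_i \in \fG_\tau$, which by Definition \ref{defn:stb} means $g_j^{-1} g_i \tau = \tau$, so $g_i \tau = g_j \tau$. Since the cosets $g_1 \fG_\tau, \dots, g_\ell \fG_\tau$ are distinct, I would then establish injectivity on representatives: if $g_i \tau = g_j \tau$, then $g_j^{-1} g_i$ stabilizes $\tau$ and thus lies in $\fG_\tau$, which forces $g_i \fG_\tau = g_j \fG_\tau$, hence $i = j$. This shows that $g_1 \tau, \dots, g_\ell \tau$ are pairwise distinct.

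Next I would verify surjectivity onto $\fG \tau$. Any element of $\fG \tau$ has the form $g \tau$ for some $g \in \fG$. Using the disjoint decomposition $\fG = \bigsqcup_{i=1}^\ell g_i \fG_\tau$, we can write $g = g_i h$ for some $i$ and some $h \in \fG_\tau$. Then $g \tau = g_i h \tau = g_i \tau$, since $h$ stabilizes $\tau$. Thus every element of $\fG \tau$ appears among $g_1 \tau, \dots, g_\ell \tau$.

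Combining the two previous steps, $\fG \tau = \{g_1\tau, \dots, g_\ell\tau\}$ with no repetitions, so $|\fG\tau| = \ell$. The counting identity $|\fG_\tau| \cdot |\fG \tau| = |\fG|$ then follows directly from the fact that $|\fG| = \ell \cdot |\fG_\tau|$, which is Lagrange's theorem applied to the subgroup $\fG_\tau \le \fG$. There is no serious obstacle here: this is essentially the orbit-stabilizer theorem, and the only care needed is to correctly track which coset ($g_j^{-1} g_i$ rather than $g_i^{-1} g_j$) is the one that must land in the stabilizer.
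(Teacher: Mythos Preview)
Your proof is correct and is the standard orbit--stabilizer argument. The paper does not actually prove this lemma; it simply introduces it as ``a basic group theory result'' and states it without proof, so you have supplied the details the paper omits.
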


The following lemma summarizes results from \cite[Secion 1.12]{humphreys} that will be useful for the discussion in this part. 
\begin{lem}\label{lem:stabilizer} The followings are true:
  \begin{enumerate}
    \item\label{itm:unique} 
  For any $\z \in W$, there is a unique $\w \in \Phi$ such that $\z \in \fG \w$. Hence,
  $\ds W = \bigsqcup_{\w \in \Phi} \fG \w.$
  
\item\label{itm:stbunique} For any $\eta\in \Sigma(\cH)$, there exists a unique subgroup of $\fG$ that is the stabilizer of every point in $\eta^\circ.$  
  Furthermore, if $\eta= \Phi$, this unique stabilizer is the trivial group of $\fG$.

\item\label{itm:stbinclusion} Let $\eta, \eta' \in \Sigma(\cH),$ $\w \in \eta^\circ$ and $\w' \in (\eta')^\circ$. If $\eta' \subseteq \eta$, then $\fG_{\w} \subseteq \fG_{\w'}.$
  \end{enumerate}
\end{lem}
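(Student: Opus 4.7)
My plan is to prove the three parts sequentially, all relying on the standard structural description from \cite[Section 1.12]{humphreys} that $\Phi$ is a strict (closed) fundamental domain for the action of $\fG$ on $W$, together with the explicit description of point stabilizers via reflections.

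For part \ref{itm:unique}, existence follows easily from Lemma \ref{lem:Gbij}: any $\z \in W$ lies in some chamber, which by that lemma has the form $g\Phi$ for some $g \in \fG$, so $\w := g^{-1}\z \in \Phi$ is a preimage in $\Phi$. The content is uniqueness: suppose $\w_1, \w_2 \in \Phi$ satisfy $\w_2 = g\w_1$ for some $g \in \fG$; we must show $\w_1 = \w_2$. I would induct on the length of $g$ as a word in the simple reflections associated to the walls of $\Phi$. The base case $g = e$ is immediate. For the inductive step, the key point is that if a simple reflection $s$ does not fix $\w_1$, then $s\w_1$ lies on the opposite side of the corresponding wall from $\Phi$, and any subsequent reflections only move it within or across further walls, so one shows $g\w_1 \in \Phi$ forces each simple reflection appearing in a reduced word for $g$ to fix $\w_1$, hence $g\w_1 = \w_1$.

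For part \ref{itm:stbunique}, the crucial input is the classical fact (\cite[Theorem 1.12]{humphreys}) that the stabilizer $\fG_{\w}$ of any $\w \in W$ is generated by the reflections $t_H$ through those hyperplanes $H \in \cH$ with $\w \in H$. The key observation is then that the collection $\{H \in \cH : \w \in H\}$ depends only on the face of $\Sigma(\cH)$ containing $\w$ in its relative interior: indeed, $H \cap \eta \supseteq \{\w\}$ with $\w \in \eta^\circ$ forces $H \supseteq \eta$, so this collection is constant as $\w$ ranges over $\eta^\circ$. This proves the first assertion of (2). For the second assertion, if $\eta = \Phi$ then $\Phi$ is full-dimensional and no $H \in \cH$ contains $\Phi$; hence the generating set is empty and $\fG_\w$ is trivial for $\w \in \Phi^\circ$.

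Part \ref{itm:stbinclusion} is then an immediate consequence of the description in part \ref{itm:stbunique}: if $\eta' \subseteq \eta$, then every hyperplane of $\cH$ containing $\eta$ also contains $\eta'$, so the reflection-generating set of $\fG_\w$ is a subset of that of $\fG_{\w'}$, giving $\fG_\w \subseteq \fG_{\w'}$. The main obstacle is the uniqueness portion of part (1) and the reflection-generation theorem for point stabilizers underlying part (2); both are standard but nontrivial results which I would cite directly from Humphreys rather than reprove, after which the verifications above are short.
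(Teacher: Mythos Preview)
The paper does not actually prove this lemma: it simply introduces it as ``summarizes results from \cite[Section 1.12]{humphreys}'' with no further argument. Your proposal is correct and in fact gives substantially more detail than the paper does; your outline is precisely the standard Humphreys argument (fundamental domain via induction on length for part~\ref{itm:unique}, the reflection-generation theorem for point stabilizers combined with the observation that the set $\{H \in \cH : \w \in H\}$ is constant on $\eta^\circ$ for part~\ref{itm:stbunique}, and the resulting containment of generating sets for part~\ref{itm:stbinclusion}).
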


The following corollary is an immediate consequence of the above lemma.  
\begin{cor}\label{cor:stabilizer} Let $\tau, \tau' \subseteq \Phi$ and $\phi \in \Sigma(\Phi)$.  Then the followings are true:
    \begin{enumerate}
      \item\label{itm:distinctorbits} 
	If $\tau \neq \tau'$, then $\fG \tau$ and $\fG \tau'$ are disjoint. 
      \item\label{itm:pointwise} 
	If $g \in \fG$ stabilizes $\tau,$ then $g$ stabilizes every point of $\tau.$ Hence, $\ds \fG_\tau = \cap_{\w \in \tau} \fG_\w$ is a pointwise stabilizer of $\tau$.

      \item\label{itm:connection} 
	Let $\w \in \phi^\circ$. 
	If $\tau \subseteq \phi$ and $\tau \cap \phi^\circ \neq \emptyset,$ then $\fG_\tau=\fG_\w$ is the unique stabilizer for all points in $\phi^\circ$. 
	Consequently, we have $\fG_\phi= \fG_\w = \fG_\tau$. 
      
      \item\label{itm:Phitau} Suppose $\phi=\kappa(\tau)$. Then $\fG_\phi=\fG_\tau.$ Thus, for any $g, g' \in \fG$, we have
	\[ g \phi= g'\phi \text{ if and only if } g \tau = g' \tau.\]
    \end{enumerate}

  \end{cor}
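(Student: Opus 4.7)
The plan is to deduce all four parts from Lemma \ref{lem:stabilizer}, treating them in the order (\ref{itm:pointwise}), (\ref{itm:distinctorbits}), (\ref{itm:connection}), (\ref{itm:Phitau}) so that each later part may use the earlier ones. The unifying principle is Lemma \ref{lem:stabilizer}(\ref{itm:unique}): every $\fG$-orbit meets $\Phi$ in a unique point.

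For part (\ref{itm:pointwise}), if $g$ stabilizes $\tau$ and $\x\in\tau$, then both $\x$ and $g\x$ lie in $\Phi$ and in the same $\fG$-orbit, hence $g\x=\x$ by the uniqueness above; this establishes $\fG_\tau=\bigcap_{\w\in\tau}\fG_\w$. Part (\ref{itm:distinctorbits}) follows the same principle: if $\fG\tau\cap\fG\tau'\neq\emptyset$, write $\tau=h\tau'$ for some $h\in\fG$; for each $\y\in\tau'$ the points $\y$ and $h\y$ both lie in $\Phi$ and in a common orbit, forcing $h\y=\y$, so $\tau=h\tau'=\tau'$.

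For part (\ref{itm:connection}), I will pick a witness $\x\in\tau\cap\phi^\circ$. By Lemma \ref{lem:stabilizer}(\ref{itm:stbunique}) all points of $\phi^\circ$ share a common stabilizer, so $\fG_\w=\fG_\x$, and part (\ref{itm:pointwise}) already gives $\fG_\tau\subseteq\fG_\x=\fG_\w$. For the reverse inclusion, take $g\in\fG_\w$ and any $\y\in\tau\subseteq\phi$; the unique face $\eta'\in\Sigma(\cH)$ with $\y\in(\eta')^\circ$ must be contained in $\phi$ (since $\y\in\phi\cap(\eta')^\circ$ forces $\eta'\subseteq\phi$ in a fan), so Lemma \ref{lem:stabilizer}(\ref{itm:stbinclusion}) yields $g\in\fG_\w\subseteq\fG_\y$; varying $\y$ shows that $g$ fixes $\tau$ pointwise, hence $g\in\fG_\tau$. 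The identity $\fG_\phi=\fG_\w$ is then just the special case $\tau=\phi$ of what we have proved, which trivially satisfies $\phi\cap\phi^\circ\neq\emptyset$.

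Finally, part (\ref{itm:Phitau}) is a formal consequence. Lemma \ref{lem:propertyPhi_tau} guarantees that $\phi=\kappa(\tau)=\Phi_\tau$ satisfies the hypotheses of part (\ref{itm:connection}), giving $\fG_\phi=\fG_\tau$; the biconditional $g\phi=g'\phi\iff g\tau=g'\tau$ then reduces to the standard coset statement that both sides are equivalent to $g^{-1}g'$ belonging to this common stabilizer. The only step I expect to require any care is the assertion $\eta'\subseteq\phi$ used in part (\ref{itm:connection}); although this is a routine property of polyhedral fans, the write-up should make explicit that $\phi$ is itself a cone of $\Sigma(\cH)$ so that the relative interior of any face of $\Sigma(\cH)$ meeting $\phi$ is forced to lie entirely inside $\phi$.
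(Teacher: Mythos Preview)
Your proof is correct and follows essentially the same approach as the paper's: both derive (\ref{itm:distinctorbits}) and (\ref{itm:pointwise}) directly from the uniqueness statement in Lemma~\ref{lem:stabilizer}(\ref{itm:unique}), prove (\ref{itm:connection}) by combining (\ref{itm:pointwise}) with Lemma~\ref{lem:stabilizer}(\ref{itm:stbunique})(\ref{itm:stbinclusion}) and the fact that every point of $\phi$ lies in the relative interior of some face of $\phi$, and deduce (\ref{itm:Phitau}) from (\ref{itm:connection}) via Lemma~\ref{lem:propertyPhi_tau}. Your write-up is simply more explicit than the paper's terse version, and your caution about justifying $\eta'\subseteq\phi$ is well placed.
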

  We remark that the above corollary holds if we replace $\Phi$ with any chamber of $\Sigma(\cH)$.

  \begin{proof} Conclusions \eqref{itm:distinctorbits} and \eqref{itm:pointwise} follow from Lemma \ref{lem:stabilizer}/\eqref{itm:unique}. 
    We now prove \eqref{itm:connection}.
    The conclusion that $\fG_\tau = \fG_\w$ follows from \eqref{itm:pointwise}, Lemma \ref{lem:stabilizer}/\eqref{itm:stbunique}\eqref{itm:stbinclusion}, and the fact that for any point $\w' \in \phi,$ there exists a (unique) face $\phi'$ of $\phi$ such that $\w' \in (\phi')^\circ.$ Finally, one notices that $\phi$ itself is a $\tau$ satisfying $\tau \cap \phi^\circ \neq \emptyset.$ Thus, $\fG_\phi= \fG_\w$.

    Finally, \eqref{itm:Phitau} follows from \eqref{itm:connection} and Lemma \ref{lem:propertyPhi_tau}. 
  \end{proof}

\begin{rem}\label{rem:parabolic}
  The groups that arise as the stabilizers $\fG_\phi$ of faces $\phi$ of $ \Phi $ are the so-called \emph{parabolic subgroups} of the reflection group $ \fG $ \cite[Chapter 1.10]{humphreys}.
  They are in bijection with faces of $ \Phi $ \cite[Chapter 1.15]{humphreys}, and are all different, i.e., $\fG_{\phi} \neq \fG_{\phi'}$ if $\phi\neq \phi' \in \Sigma(\Phi)$. 
\end{rem}

\begin{proof}[Proof of Theorem \ref{thm:symposet}]
  Using Corollary \ref{cor:stabilizer}/\eqref{itm:distinctorbits}, we can rewrite \eqref{eq:fundamental_to_complete} as
  \begin{equation}
  	\label{eq:fundamental_to_complete1}
  	\Sigma(\fG(P)) = \fG(\ffan(P)) = \bigsqcup_{\tau \in \ffan(P)} \fG \tau = \bigsqcup_{\phi \in \Sigma(\Phi)} \bigsqcup_{\tau \in \ffan(P, \phi)} \fG \tau. 
  \end{equation}
  We define a map $f: \Sigma(\fG(P)) \to  \fG\cR(P) $ in the following way: for each $\tau_0 \in \Sigma(\fG(P))$, by the above decomposition, there exists a unique $\tau \in \ffan(P)$ such that $\tau_0 = g \tau$ for some $g \in \fG$. 
	We define
  \[ f(\tau_0) = (\tau, g \kappa(\tau)).\]
	Note that by Corollary \ref{cor:stabilizer}/\eqref{itm:Phitau}, the definition of $f(\tau_0)$ is independent from the choice of $g$ as long as it satisfies $\tau_0 = g \tau$, and hence $ f $ is well-defined. For the same reason, given $(\tau, g \kappa(\tau)) \in \fG\cR(P)$, there exists a unique $t_0 =gt$ as its preimage. Hence, $f$ is a bijection. 

  It remains to prove that $g_1 \tau_1 \subseteq g_2 \tau_2$, as faces of $ \Sigma(\fG(P))$, if and only if $(\tau_1, g_1 \kappa(\tau_1)) \le (\tau_2, g_2 \kappa(\tau_2))$ in the poset $\fG\cR(P)$. 
	However, by Lemma \ref{lem:1stonGR} and Corollary \ref{cor:stabilizer}/\eqref{itm:Phitau}, one sees that the latter is equivalent to  
  \begin{equation}
  \label{eq:equiv-cond}
\tau_1 \subseteq \tau_2 \text{ and } \exists g\in \fG \text{ such that  $g^{-1} g_i \in \fG_{\tau_i}$ for $i=1$ and $2.$}
\end{equation}
By the Corollary \ref{cor:stabilizer}/\eqref{itm:distinctorbits}, we must have $g^{-1} g_i \tau_i = \tau_i$ for $i=1$ and $2$.
It is then clear that \eqref{eq:equiv-cond} implies that $g_1 \tau_1 = g\tau_1 \subseteq g\tau_2 = g_2 \tau_2$. 
Now suppose $g_1\tau_1 \subseteq g_2\tau_2$. 
Then there exists $g \in \fG$ such that $g_1\tau_1 \subseteq g_2\tau_2 \subseteq g\Phi.$ Thus, $g^{-1} g_1 \tau_1 \subseteq g^{-1} g_2 \tau_2 \subseteq \Phi.$ 
 Hence, \eqref{eq:equiv-cond} follows, completing the proof.
\end{proof}

We end this part with an example.
In order to illustrate the complete poset of the symmetrization, we will use an example in two dimensions, as opposed to our running example in three dimensions.

\begin{ex}
Consider the line segment $ L = \conv \{ (1,5,6), (2,3,7) \} \subset \mathbb{R}^{3} $.
Since both vertices have the same sum of coordinates (namely 12) we have $ L \subset U_{2}^{(1,5,6)} $.
Thus we can consider its $ \fS_2 $-symmetrization. 

Note that $ L $ is $ \Phi_2 $-placed and $ \Psi_{2} $-appropriate.
The two normal cones to the the vertices split the 2-dimensional fundamental chamber $ \Phi_{2} $ in two.
Thus the fundamental fan $ \ffan_{W_2}(L) $ consists of two 2-dimensional cones intersecting in a ray, see Figure \ref{fig:line_A} in the middle.
The face poset of the fundamental fan is depicted in the right of Figure \ref{fig:line_A} with the labels on top of the poset.
To be more precise let $ \tau_1 $ be the ray spanned by $ \f_1^{(3)}=(0,1,1) $ $ W_2 $,
and similarly let $ \tau_{2} $ be the ray spanned by $ \f_2^{(3)} = (0,0,1)$.
Furthermore, the intersection of the two 2-dimensional cones in the fundamental fan is the ray spanned by $ (0,1,2)$, 
which is denoted by $ c $ in Figure \ref{fig:line_A}.

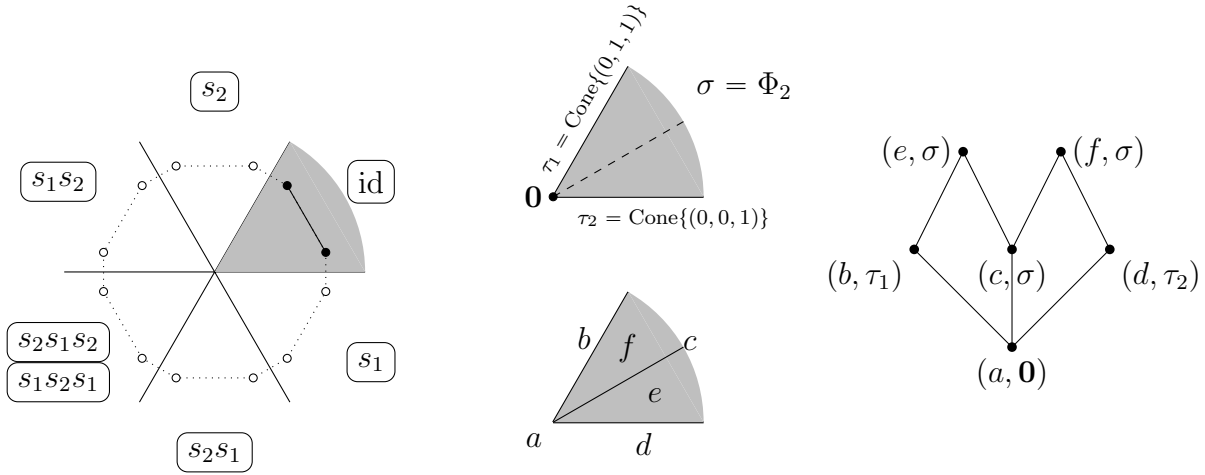
\begin{figure}[ht]
\begin{tikzpicture}
\begin{scope}[xshift=-6cm]

\draw[fill=white!75!black, draw=none] (0:2cm) arc (0:60:2);
\draw (240:2cm)--(60:2cm) (180:2cm)--(0:2cm) (120:2cm)--(300:2cm);
\draw[fill=white!75!black, draw=none](0:0cm) -- (0:2cm) -- (60:2cm) -- cycle;
\filldraw[black] (10:1.5cm) circle (1.5 pt);
\filldraw[black] (50:1.5cm) circle (1.5 pt);
\draw (10:1.5cm) -- (50:1.5cm);

\node[draw, rounded corners] at (30:2.4cm) {id};
\node[draw, rounded corners] at (90:2.4cm) {$ s_{2} $};
\node[draw, rounded corners] at (150:2.4cm) {$ s_{1}s_{2} $};
\node[above, draw, rounded corners] at (210:2.4cm) {$ s_{2}s_{1}s_{2} $};
\node[below, draw, rounded corners] at (210:2.4cm) {$ s_{1}s_{2}s_{1} $};
\node[draw, rounded corners] at (270:2.4cm) {$ s_{2}s_{1} $};
\node[draw, rounded corners] at (330:2.4cm) {$ s_{1} $};

\draw[dotted] (10:1.5cm) -- (50:1.5cm) -- (70:1.5cm) -- (110:1.5cm) -- (130:1.5cm) -- (170:1.5cm)--(190:1.5cm) -- (230:1.5cm) -- (250:1.5cm) -- (290:1.5cm) -- (310:1.5cm) -- (350:1.5cm)-- cycle;

\draw[fill=white] (70:1.5cm) circle (1.5 pt);
\draw[fill=white] (170:1.5cm) circle (1.5 pt);
\draw[fill=white] (-170:1.5cm) circle (1.5 pt);
\draw[fill=white] (290:1.5cm) circle (1.5 pt);
\draw[fill=white] (-50:1.5cm) circle (1.5 pt);

\draw[fill=white] (-10:1.5cm) circle (1.5 pt);
\draw[fill=white] (110:1.5cm) circle (1.5 pt);
\draw[fill=white] (130:1.5cm) circle (1.5 pt);
\draw[fill=white] (230:1.5cm) circle (1.5 pt);
\draw[fill=white] (250:1.5cm) circle (1.5 pt);

\end{scope}

\begin{scope}[xshift=-1.5cm, yshift=1cm]
\draw[thick] (240:0cm)--(60:2cm) (180:0cm)--(0:2cm) ;
\draw[fill=white!75!black, draw=none] (0:2cm) arc (0:60:2);
\draw[fill=white!75!black, draw=none](0:0cm) -- (0:2cm) -- (60:2cm) -- cycle;

\draw[dashed] (0:0)--(30:2cm);
\node[right] at (40:2.3cm) {$\sigma = \Phi_2$};
\node[below] at (0:1.6cm) {\tiny $\tau_{2} = \text{Cone}\{(0,0,1)\}$};
\node[anchor=north, rotate=60] at (80:1.6cm) {\tiny $ \tau_{1} = \text{Cone}\{(0,1,1)\} $};
\draw[fill=black] (0:0) circle (1.5 pt);
\node[left] at (0:0) {$ \mathbf{0} $};
\end{scope}

\begin{scope}[xshift=-1.5cm, yshift=-2cm]
\draw[thick] (240:0cm)--(60:2cm) (180:0cm)--(0:2cm) ;
\draw[fill=white!75!black, draw=none] (0:2cm) arc (0:60:2);
\draw[fill=white!75!black, draw=none](0:0cm) -- (0:2cm) -- (60:2cm) -- cycle;

\draw (0:0)--(30:2cm);
\node[below] at (0:1.2cm) {$ d $};
\node at (30:2.1cm) {$ c $};
\node at (70:1.2cm) {$ b $};
\node[below left] at (0:0) {$ a $};
\node at (15:1.4cm) {$ e $};
\node at (45:1.4cm) {$ f $};
\end{scope}

\begin{scope}[xshift=2cm, yshift=-1cm,
vertex/.style={inner sep=1pt,circle,draw=black,fill=black,thick,anchor=base},
scale = 1.3]

\node[vertex] at (2,0) {};
\node[vertex] at (1,1) {};
\node[vertex] at (2,1) {};
\node[vertex] at (3,1) {};
\node[vertex] at (1.5,2) {};
\node[vertex] at (2.5,2) {};

\node[below] at (2,0) { $(a, \mathbf{0})$};
\node[below left] at (1,1) { $ (b, \tau_{1})$};
\node[below] at (2,1) { $(c, \sigma)$};
\node[below right] at (3,1) { $(d, \tau_{2})$};
\node[left] at (1.5,2) { $(e, \sigma)$};
\node[right] at (2.5,2) { $(f, \sigma)$};

\draw (2,0) -- (1,1) (2,0) -- (2,1) (2,0) -- (3,1) (1,1) -- (1.5,2) (2,1) -- (1.5,2) (2,1) -- (2.5,2) (3,1) -- (2.5,2);
\end{scope}
\end{tikzpicture}
\caption{The symmetrization of $ L $ on the left.
In the middle we have the fundamental fan $ \ffan_{W_2}(L) $. 
And finally on the right we have the refined fundamental fan.
The names of the cones in the fundamental fan are denoted $ \left\{ a,b,c,d,e,f \right\} $,
while the labels are $ \left\{ \mathbf{0}, \tau_{1}, \tau_{2}, \sigma \right\} $}
\label{fig:line_A}
\end{figure}
The left hand side of Figure \ref{fig:line_A} also has depicted a correspondence between elements of $ \fG_2 $ and the chambers of the arrangement 
(here $ s_{i} $ is the reflection over the line spanned by $ \tau_{i} $).

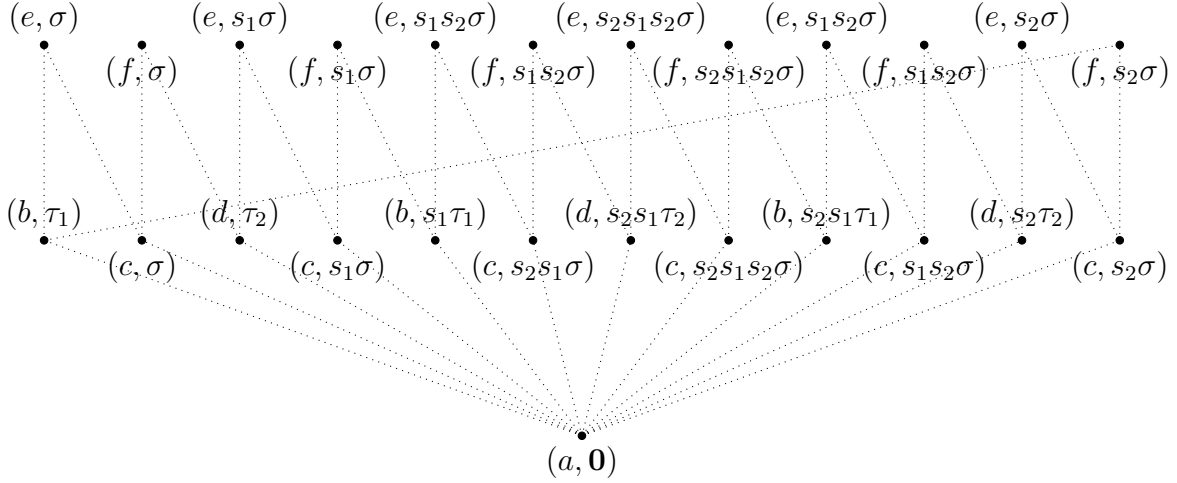
\begin{figure}
\begin{tikzpicture}[ vertex/.style={inner sep=1pt,circle,draw=black,fill=black,anchor=base}, scale=1.3]

	\draw[dotted] (0,2) -- (0,4) (5.5,0)--(0,2) ;
	\draw[dotted] (0,2) -- (11,4);
	\foreach \x in {1,...,11}
	\draw[dotted] (\x,2)--(\x,4) (\x,2)--(\x-1,4) (5.5,0)--(\x,2) ;

	\node[vertex] at (0,2) {};
	\node[above] at (0,2) {$ (b, \tau_{1}) $};
	\node[vertex] at (1,2) {};
	\node[below] at (1,2) {$ (c, \sigma) $};
	\node[vertex] at (2,2) {};
	\node[above] at (2,2) {$ (d, \tau_{2}) $};
	\node[vertex] at (3,2) {};
	\node[below] at (3,2) {$ (c, s_{1}\sigma) $};
	\node[vertex] at (4,2) {};
	\node[above] at (4,2) {$ (b, s_{1}\tau_{1}) $};
	\node[vertex] at (5,2) {};
	\node[below] at (5,2) {$ (c, s_{2}s_{1}\sigma) $};
	\node[vertex] at (6,2) {};
	\node[above] at (6,2) {$ (d, s_{2}s_{1}\tau_{2}) $};
	\node[vertex] at (7,2) {};
	\node[below] at (7,2) {$ (c, s_{2}s_{1}s_{2}\sigma) $};
	\node[vertex] at (8,2) {};
	\node[above] at (8,2) {$ (b, s_{2}s_{1}\tau_{1}) $};
	\node[vertex] at (9,2) {};
	\node[below] at (9,2) {$ (c, s_{1}s_{2}\sigma) $};
	\node[vertex] at (10,2){};
	\node[above] at (10,2){$ (d, s_{2}\tau_{2}) $};
	\node[vertex] at (11,2){};
	\node[below] at (11,2) {$ (c, s_{2}\sigma) $};

	\node[vertex] at (0,4) {};
	\node[above] at (0,4) {$ (e, \sigma) $};
	\node[vertex] at (1,4) {};
	\node[below] at (1,4) {$ (f, \sigma) $};
	\node[vertex] at (2,4) {};
	\node[above] at (2,4) {$ (e, s_{1}\sigma) $};
	\node[vertex] at (3,4) {};
	\node[below] at (3,4) {$ (f, s_{1}\sigma) $};
	\node[vertex] at (4,4) {};
	\node[above] at (4,4) {$ (e, s_{1}s_{2}\sigma) $};
	\node[vertex] at (5,4) {};
	\node[below] at (5,4) {$ (f, s_{1}s_{2}\sigma) $};
	\node[vertex] at (6,4) {};
	\node[above] at (6,4) {$ (e, s_{2}s_{1}s_{2}\sigma) $};
	\node[vertex] at (7,4) {};
	\node[below] at (7,4) {$ (f, s_{2}s_{1}s_{2}\sigma) $};
	\node[vertex] at (8,4) {};
	\node[above] at (8,4) {$ (e, s_{1}s_{2}\sigma) $};
	\node[vertex] at (9,4) {};
	\node[below] at (9,4) {$ (f, s_{1}s_{2}\sigma) $};
	\node[vertex] at (10,4){};
	\node[above] at (10,4){$ (e, s_{2}\sigma) $};
	\node[vertex] at (11,4) {};
	\node[below] at (11,4) {$ (f, s_{2}\sigma) $};

	\node[vertex] at (5.5,0) {};
	\node[below] at (5.5,0) {$ (a, \mathbf{0}) $};

\end{tikzpicture}

\caption{Complete face poset of the normal fan of the $ \fS_2 $-symmetrization of $ L $ as the symmetrization of its refined fundamental fan.}
\label{fig:symmetrized}
\end{figure}

\end{ex}

\subsection{Face vectors of symmetrizations} 
In this part, we derive a formula that computes the $f$-vector of $\fG(P)$ from the combinatorics of the refined fundamental fan, as a consequence of results that have been presented in this section.
At the end we show that even though in general the combinatorics of the fundamental fan $\ffan(P)$ does not determine the $f$-vector of $\fG(P)$ as shown in Example \ref{ex:fvec-counter}, it does determine the first two entries of the $f$-vector of $\fG(P)$ (see Remark \ref{rem:determine}). 

\begin{prop}\label{prop:computefvec}
  Suppose $P$ is a $\Phi$-placed and $\Psi$-appropriate polytope. For $\phi \in \Sigma(\Phi)$, let $m(\phi)=|\fG_\phi|$.  
  Then
  \begin{equation}
f_i(\fG(P)) = \sum_{\text{$\phi:$ face of $\Phi$ of codimension at most $i$}} f^i(\ffan(P,\phi)) \cdot \frac{|\fG|}{m(\phi)}.
    \label{eq:computefvec}
  \end{equation}
\end{prop}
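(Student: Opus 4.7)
The plan is to prove this directly by counting the codimension-$i$ cones in $\Sigma(\fG(P))$, using the orbit decomposition \eqref{eq:fundamental_to_complete1} and the stabilizer results from Corollary \ref{cor:stabilizer}. By Lemma \ref{lem:face&fan}, it suffices to show that
\[ f^i(\Sigma(\fG(P))) = \sum_{\phi \in \Sigma(\Phi),\ \text{codim}(\phi) \leq i} f^i(\ffan(P,\phi)) \cdot \frac{|\fG|}{m(\phi)}. \]

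First I would recall the disjoint union decomposition
\[ \Sigma(\fG(P)) = \bigsqcup_{\phi \in \Sigma(\Phi)} \bigsqcup_{\tau \in \ffan(P,\phi)} \fG\tau \]
from \eqref{eq:fundamental_to_complete1}, which reduces the count to a double sum over $\phi \in \Sigma(\Phi)$ and $\tau \in \ffan(P,\phi)$ of $|\fG\tau|$ restricted to those $\tau$ of codimension $i$ in $W$. Since $g \in \fG$ acts by linear isomorphism, every cone in the orbit $\fG\tau$ has the same dimension as $\tau$, so the orbit contributes to $f^i$ if and only if $\tau$ itself has codimension $i$.

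Next I would compute the orbit size $|\fG\tau|$. By the orbit--stabilizer relation (Lemma \ref{lem:sta-orb}), $|\fG\tau| = |\fG|/|\fG_\tau|$. The key identification is supplied by Corollary \ref{cor:stabilizer}/\eqref{itm:connection}: if $\tau \in \ffan(P,\phi)$, meaning $\Phi_\tau = \phi$, then $\tau \subseteq \phi$ and $\tau \cap \phi^\circ \neq \emptyset$, which forces $\fG_\tau = \fG_\phi$. Hence $|\fG\tau| = |\fG|/m(\phi)$, a quantity depending only on $\phi$, not on $\tau$. Summing over the $f^i(\ffan(P,\phi))$ many $\tau \in \ffan(P,\phi)$ of codimension $i$ in $W$, each contributing $|\fG|/m(\phi)$ cones to $\Sigma(\fG(P))$, yields exactly the desired formula. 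The restriction $\text{codim}(\phi) \leq i$ in the sum is automatic: any $\tau \in \ffan(P,\phi)$ satisfies $\tau \subseteq \phi$, so $\dim \tau \le \dim \phi$; thus $\ffan(P,\phi)$ contains a codimension-$i$ cone only when $\text{codim}(\phi) \leq i$, and the remaining terms contribute $0$.

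Finally, to assemble these pieces into the formula for $f_i(\fG(P))$ (rather than $f^i(\Sigma(\fG(P)))$), I invoke Lemma \ref{lem:face&fan}, which asserts that these two quantities coincide for the full-dimensional polytope $\fG(P)$ (full-dimensionality is guaranteed by Theorem \ref{thm:fundamental_cones}).

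There is no serious obstacle: this is essentially a bookkeeping argument once the orbit decomposition \eqref{eq:fundamental_to_complete1} and the stabilizer equality $\fG_\tau = \fG_\phi$ for $\tau \in \ffan(P,\phi)$ are in hand. The only subtlety to be careful about is verifying that orbits $\fG\tau$ for distinct $\tau \in \ffan(P)$ are genuinely disjoint so that no double-counting occurs; this is exactly the content of Corollary \ref{cor:stabilizer}/\eqref{itm:distinctorbits}, already used in passing from \eqref{eq:fundamental_to_complete} to \eqref{eq:fundamental_to_complete1}.
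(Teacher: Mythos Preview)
Your proposal is correct and follows essentially the same approach as the paper's proof: reduce to counting codimension-$i$ cones via Lemma~\ref{lem:face&fan}, use the orbit decomposition \eqref{eq:fundamental_to_complete1}, and apply orbit--stabilizer (Lemma~\ref{lem:sta-orb}) together with the identification $\fG_\tau = \fG_\phi$ for $\tau \in \ffan(P,\phi)$. The paper's own proof is more terse, but the underlying argument is identical; your additional remarks on why the codimension restriction is automatic and why no double-counting occurs are correct elaborations of details the paper leaves implicit.
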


\begin{proof}
  By Lemma \ref{lem:face&fan}, 
  $f_i(\fG(P)) = f^i(\Sigma(\fG(P)))$. It follows from \eqref{eq:fundamental_to_complete1} and Lemma \ref{lem:sta-orb} that $f^i(\Sigma(\fG(P)))$ is computed by \eqref{eq:computefvec}.
  
\end{proof}

\begin{ex}
We apply Proposition \ref{prop:computefvec} to Example \ref{ex:running_example} to compute the f-vector of the symmetrization of the line segment $S$.
The refined fundamental fan is described in detail in Example \ref{ex:running_fan_decomposed}.
Following Proposition \ref{prop:computefvec}, we must compute $ m(\phi) $ for each nonempty face $\phi$ of $\Phi_3$.
We use the same notation as in Example \ref{ex:running_fan_decomposed}.
\begin{equation*}
\begin{array}{c|c|c|c|c|c|c|c|c}
\text{Face $\phi$} & \Phi & \phi_1 & \phi_2 & \phi_3 & r_1 & r_2 & r_3 & 0 \\
\hline
m(\phi) & 1 & 2 & 2 & 2 & 6 & 4 & 6 & 24 
\end{array}
\end{equation*}
From here we can compute, via Equation \eqref{eq:computefvec}, that the f-vector of $ P $ is $ (48,72,26,1) $.

\end{ex}

In general, it is not so easy to compute $f^i(\ffan(P,\phi))$ for an arbitrary face $\phi$ of $\Phi.$ 
However, it follows from Proposition \ref{prop:ZPPhi} that if $\phi=\Phi$, these numbers are precisely the $f$-vector of $P$.

\begin{lem}\label{lem:countiffan}
  Suppose $P$ is a $\Phi$-placed and $\Psi$-appropriate polytope.
  Then $f^i(\ffan(P,\Phi)) = f_i(P)$ for each $i$.
\end{lem}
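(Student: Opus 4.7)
The plan is to deduce this directly from Proposition \ref{prop:ZPPhi} combined with Lemma \ref{lem:face&fan}, so the proof will be very short. First I would invoke Proposition \ref{prop:ZPPhi}, which asserts that the map $\sigma \mapsto \sigma \cap \Phi$ is a \emph{codimension-preserving} bijection from $\Sigma(P)$ onto $\ffan(P,\Phi)$. In particular, the number of cones of codimension $i$ on the two sides match, giving
\[ f^i(\ffan(P,\Phi)) = f^i(\Sigma(P)). \]

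Next I would translate the right-hand side into a face count of $P$ itself. By Lemma \ref{lem:face&fan}, the correspondence $F \mapsto \ncone(F,P)$ is an inclusion-reversing bijection between nonempty faces of $P$ and cones of $\Sigma(P)$, sending a $k$-dimensional face to a codimension-$k$ cone. Hence $f^i(\Sigma(P)) = f_i(P)$, and combining this with the previous display yields the claimed equality $f^i(\ffan(P,\Phi)) = f_i(P)$.

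There is no real obstacle here: both ingredients have been established earlier in the section, and the lemma is essentially a repackaging of Proposition \ref{prop:ZPPhi} into the language of $f$-vectors that is needed in the subsequent formula \eqref{eq:computefvec}. The only thing worth emphasizing in the write-up is that the bijection in Proposition \ref{prop:ZPPhi} preserves codimension (which is where the hypotheses that $P$ be $\Phi$-placed and $\Psi$-appropriate are being used, via the proof of that proposition).
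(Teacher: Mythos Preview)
Your proposal is correct and matches the paper's approach exactly: the paper simply remarks that the lemma follows from Proposition \ref{prop:ZPPhi}, and your write-up spells this out by combining the codimension-preserving bijection $\Sigma(P)\to\ffan(P,\Phi)$ with the face/cone correspondence of Lemma \ref{lem:face&fan}.
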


Now we are able to determine two entries of the $f$-vector of $\fG(P)$.
\begin{cor}\label{cor:detfvec}
Suppose $P$ is a $\Phi$-placed and $\Psi$-appropriate polytope. Then the first two entries in the $f$-vector of $\fG(P)$ is determined by $f_0(P), f_1(P)$ and $f^1(\ffan(P))$.
More precisely, 
  \begin{align}
    f_0(\fG(P)) =& |\fG| f_0(P), \label{eq:detv} \\ 
    f_1(\fG(P)) =& \frac{|\fG|}{2} \left (f^1(\ffan(P)) + f_1(P) \right). \label{eq:dete}
    \end{align}
\end{cor}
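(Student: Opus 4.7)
The plan is to apply Proposition \ref{prop:computefvec} directly for $i = 0$ and $i = 1$, and then simplify each summation using two facts: a computation of the orders $m(\phi) = |\fG_\phi|$ for faces $\phi$ of $\Phi$ of codimension $\le 1$, and the identification of $f^i(\ffan(P,\Phi))$ with $f_i(P)$ supplied by Lemma \ref{lem:countiffan}.

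First I would dispose of \eqref{eq:detv}. The only face of $\Phi$ of codimension at most $0$ is $\Phi$ itself, so Proposition \ref{prop:computefvec} yields
\[ f_0(\fG(P)) = f^0(\ffan(P,\Phi)) \cdot \frac{|\fG|}{m(\Phi)}. \]
By Lemma \ref{lem:stabilizer}/\eqref{itm:stbunique}, the pointwise stabilizer of any interior point of $\Phi$ is trivial, and combined with Corollary \ref{cor:stabilizer}/\eqref{itm:connection} this gives $\fG_\Phi = \{1\}$, i.e., $m(\Phi) = 1$. Lemma \ref{lem:countiffan} then gives $f^0(\ffan(P,\Phi)) = f_0(P)$, finishing this case.

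For \eqref{eq:dete}, I would split the sum in Proposition \ref{prop:computefvec} for $i=1$ into the $\phi = \Phi$ term and the sum over facets $\phi$ of $\Phi$:
\[ f_1(\fG(P)) = f^1(\ffan(P,\Phi)) \cdot |\fG| + \sum_{\phi:\text{ facet of }\Phi} f^1(\ffan(P,\phi)) \cdot \frac{|\fG|}{m(\phi)}. \]
The first term equals $f_1(P) \cdot |\fG|$ by Lemma \ref{lem:countiffan}. For the facet terms, I claim $m(\phi) = 2$: a facet $\phi$ of $\Phi$ lies in exactly one hyperplane $H \in \cH$, and for any $\w \in \phi^\circ$ the stabilizer $\fG_\w$ is generated by the reflections in $\fG$ fixing $\w$, which consist only of the reflection $t_H$ through $H$; Corollary \ref{cor:stabilizer}/\eqref{itm:connection} then gives $\fG_\phi = \fG_\w = \{1, t_H\}$. (This is the $|W_I| = 2$ case of the standard parabolic subgroup description, cf.\ Remark \ref{rem:parabolic}.)

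The last step is the combinatorial bookkeeping. Any codimension-$1$ cone $\tau \in \ffan(P)$ has $\dim(\Phi_\tau) \ge \dim(\tau)$, so $\Phi_\tau$ is either $\Phi$ itself or a facet of $\Phi$. Thus the disjoint decomposition \eqref{eq:decomposition} restricted to codimension-$1$ cones gives
\[ f^1(\ffan(P)) \;=\; f^1(\ffan(P,\Phi)) \;+\; \sum_{\phi:\text{ facet of }\Phi} f^1(\ffan(P,\phi)) \;=\; f_1(P) + \sum_{\phi:\text{ facet}} f^1(\ffan(P,\phi)). \]
Substituting this into the displayed formula for $f_1(\fG(P))$ and collecting terms gives exactly \eqref{eq:dete}. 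The only non-routine step is the identification $m(\phi) = 2$ for facets, so that is where I would be most careful.
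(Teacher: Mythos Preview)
Your proof is correct and follows the same approach as the paper's: apply Proposition \ref{prop:computefvec} for $i=0,1$, use $m(\Phi)=1$ and $m(\phi)=2$ for facets, and invoke Lemma \ref{lem:countiffan}. The paper's proof is terse and leaves the bookkeeping identity $f^1(\ffan(P)) = f_1(P) + \sum_{\phi:\text{ facet}} f^1(\ffan(P,\phi))$ implicit, whereas you spell it out carefully---which is helpful, since that step is where the formula \eqref{eq:dete} actually comes together.
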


\begin{proof}
Note that $m(\Phi)=1$ and for any facet $\phi$ of $\Phi$, we always have $m(\phi)=2.$ 
Then the formulas follow from Formula \eqref{eq:computefvec} and Lemma \ref{lem:countiffan} with $i=0$ and $1$.
\end{proof}
\begin{rem}\label{rem:determine} It follows from Proposition \ref{prop:ffan-det-fp} that $\cZ(P)$ (the combinatorics of $\ffan(P)$) determines $f_0(P)$ and $f_1(P)$. Since one clearly can read $f^1(\ffan(P))$ from $\cZ(P)$. Corollary \ref{cor:detfvec} implies that $\cZ(P)$ determines the first two entries in the $f$-vector of $\fG(P)$. 
\end{rem}

The above corollary together with Euler's formula has the following consequence. We leave it to the readers to check the details.
\begin{cor}\label{cor:fvdetfv} 
Suppose $U$ and $W$ are of dimension at most $3,$ and $P$ is a $\Phi$-placed and $\Psi$-appropriate polytope. 
Then $f^0(\ffan(P))$ and $f^1(\ffan(P))$ determine the $f$-vector of $\fG(P)$.
\end{cor}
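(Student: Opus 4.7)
The plan is to reduce the statement to Corollary \ref{cor:detfvec} together with Euler's relation, using the dimension restriction to eliminate one degree of freedom.

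First I would observe that $f^{0}(\ffan(P)) = f_{0}(P)$. Indeed, every maximal cone $\tau$ of $\ffan(P)$ has $\dim \tau = \dim \Phi$, so the inclusion-minimal face of $\Phi$ containing $\tau$ must be $\Phi$ itself; hence all maximal cones of $\ffan(P)$ lie in $\ffan(P,\Phi)$, and Proposition \ref{prop:ZPPhi} gives $f^{0}(\ffan(P)) = f^{0}(\ffan(P,\Phi)) = f_{0}(P)$.

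Next I would use the hypothesis that $\dim U = \dim W \le 3$. Since $P$ is $\Phi$-placed, $P^{\vee} \neq \{\mathbf{0}\}$, so $P$ is not full-dimensional in $U$, which forces $\dim P \le 2$. Thus $P$ is either a point, a segment, or a polygon, and in each case $f_{1}(P)$ is completely determined by $f_{0}(P)$: respectively $0$, $1$, or $f_{0}(P)$ itself. Combining this with Step 1, both $f_{0}(P)$ and $f_{1}(P)$ are determined by $f^{0}(\ffan(P))$.

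Now I would apply Corollary \ref{cor:detfvec}, which computes
\[
f_{0}(\fG(P)) = |\fG|\,f_{0}(P), \qquad f_{1}(\fG(P)) = \tfrac{|\fG|}{2}\bigl(f^{1}(\ffan(P)) + f_{1}(P)\bigr),
\]
so these two numbers are recovered from $f^{0}(\ffan(P))$ and $f^{1}(\ffan(P))$ together with the fixed quantity $|\fG|$. Finally, Theorem \ref{thm:fundamental_cones} guarantees that $\fG(P)$ is full-dimensional in $U$, so when $\dim U = 3$ the Euler relation gives $f_{2}(\fG(P)) = 2 - f_{0}(\fG(P)) + f_{1}(\fG(P))$; when $\dim U \le 2$ there is no $f_{2}$ entry to recover and the $f$-vector is already complete.

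There is no genuine obstacle here, but the key subtlety worth flagging explicitly is the implication $\dim W \le 3 \Rightarrow \dim P \le 2$, which is what collapses the a priori two pieces of information $(f_{0}(P), f_{1}(P))$ into the single datum $f^{0}(\ffan(P))$; in higher dimensions this step fails and one would need to extract $f_{1}(P)$ separately from the combinatorics of $\ffan(P)$.
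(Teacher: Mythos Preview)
Your proposal is correct and follows exactly the route the paper indicates: the paper simply says the result follows from Corollary~\ref{cor:detfvec} together with Euler's formula and leaves the details to the reader, and you have supplied precisely those details. The only point the paper does not spell out, and which you handle correctly, is why $f_{0}(P)$ and $f_{1}(P)$ are recoverable from $f^{0}(\ffan(P))$ alone---your observation that $\Phi$-placed forces $\dim P\le 2$, so that $f_{1}(P)$ is determined by $f_{0}(P)$, is the intended mechanism.
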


\section{Realizing a symmetric poset}
\label{sec:application}

In the last three sections, our discussion has been focused on deriving results of the symmetrization of a $\Phi$-placed and $\Psi$-appropriate polytope $P$. 
Thus, the starting point is always a given polytope $P$. 
Our results allow us to form a combinatorics structure to capture the combinatorics of the refined fundamental fan of $P$, and show that it contains enough information for us to construct the face poset of its symmetrization $\fG(P)$. 
It follows from Theorem \ref{thm:symposet} that the face poset of $\fG(P)$ is always $\fG$-symmetric.  
In this section, we turn our attention to a related realization problem:

\begin{ques}
  Given a finite $\fG$-symmetric poset $\cF$, can we construct a polytope $P$ such that $\cF$ is the face poset of its symmetrization $\fG(P)$?
\end{ques}

\begin{rem}
  Following Definition \ref{defn:hybrid}, the above question is equivalent to determining whether a $\fG$-symmetric poset $\cF$ is a hybrid of the face poset of of a $\fG$-permutohedron and another poset $ \mathcal{G} $ that is polytopal. 
	Hence, all the discussion below can be applied to this ``hybrid poset'' version question. 
	To save space, we will leave the details of the translation to the readers. 
\end{rem}

It turns out the theories we developed so far can reduce the realization problem of a symmetric poset to a smaller task of realizing its generating subposet. 
We need one more concept before state the main theorem of this section.

\begin{defn}
	Let $\cT$ be a $\fG$-symmetric poset with a generating subposet $\cZ$. 
	We say $ \cZ $ is \emph{$ \fG $-compatible} if for every $ z \in \cZ $, there exists a $\phi \in \Sigma(\Phi)$ such that $\fG_z = \fG_\phi$. It follows from Remark \ref{rem:parabolic} that if $\phi$ exists, it is necessarily unique.

	Hence, if $\cZ$ is $\fG$-compatible, we can associate it with a \emph{unique} $\fG$-stabilizer-preserving map $\lambda: \cZ \to \Sigma(\Phi)$ that maps $z \in \cZ$ to the unique $\phi \in \Sigma(\Phi)$ such that $\fG_z = \fG_\phi.$ 
	Moreover, for each $\phi \in \Sigma(\Phi)$, we define $\cZ(\phi)$ to be the subposet of $\cZ$ induced on $\lambda^{-1}(\phi) = \{ \z \in \cZ \ : \ \fG_z = \fG_\phi\}.$

\end{defn}

\begin{ex}\label{ex:set_partitions}
	Consider the poset $ \cS_{d} $ of set partitions of $ [d] $.
	This is similar to ordered set partitions $ \cO_{d} $ (Definition \ref{defn:ordered_set_partitions})  but blocks are unordered.
	The poset $ \cS_{d} $ is not the face poset of any polytope when $ d > 2 $.
	Indeed, every interval of length 2 in a face poset must have four elements (this is called the \textit{Diamond property} \cite[Theorem 2.7 (iii)]{zie}), and $ \cS_{d} $ contains intervals of length 2 with more than four elements whenever $ d > 2 $.

	The poset $ \cS_{d} $ is $ \fS_{d} $-symmetric poset, but it is not $ \fS_{d} $-compatible for any choice of genertaing poset when $ d > 2 $. 
	Consider the poset $ \cS_{4} $ of set partitions of $ [4] $.
	Any generating subposet must contain an element of the form $ ( \left\{ a,b \right\}, \left\{ c,d \right\} ) $.
	The stabilizer for the action of $ \fS_{4} $ of such an element is the subgroup generated by the transpositions $ (ab), (cd) $ and the elements $ (ac)(bd), (ad)(bc) $.
	This subgroup does not stabilize any $ \phi \in \Phi_{3} $ (it is not a parabolic subrgroup), and thus no generating subposet of $ \cS_{4} $ is $\fG$-compatible.
\end{ex}

Recall the carrier map $ \carr $ for $\Phi$ is defined in Definition \ref{defn:carr_map}.
\begin{ex}
  Let $ P $ be a $\Phi$-placed and $\Psi$-appropriate polytope. 
	Let $\cT(P)$ be the face poset of $\Sigma(\fG(P))$ and $\fG$-action is the one we have been using. 
	Then $\cT(P)$ is $\fG$-symmetric with the generating subposet $\cZ(P)$, the face poset of $\ffan(P)$. Furthermore, $ \cZ(P) $ is $\fG$-compatible with the carrier map $\carr$ being its associated $\fG$-stabilizer-preserving map, and for each $\phi \in \Sigma(\Phi)$, we have $\cZ(\phi) = \cZ(P, \phi)$. 

\end{ex}

\begin{thm}\label{thm:sym-poset}
  Suppose $\cT$ is $\fG$-symmetric with a $\fG$-compatible generating subposet $\cZ$, and suppose $\lambda: \cZ \to \Sigma(\Phi)$ is the $\fG$-stabilizer-preserving map associated with $\cZ$. 
	Let $\cR$ be the $\Sigma(\Phi)$-poset $\cZ^\lambda$. 
	Then the $\fG$-symmetrization $\fG\cR$ of $\cR$ is isomorphic to $\cT.$
\end{thm}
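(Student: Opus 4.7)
The plan is to construct an explicit order isomorphism $F: \fG\cR \to \cT$ given by
\[ F(\tau, \eta) = g \cdot \tau, \qquad \text{where } \eta = g\,\lambda(\tau) \text{ for some } g \in \fG, \]
following the same blueprint as the proof of Theorem \ref{thm:symposet}. The ingredients I would use repeatedly are: (i) the underlying set of $\fG\cR$, which consists of the pairs $(\tau, g\lambda(\tau))$ with $\tau \in \cZ$ and $g \in \fG$; (ii) the order relation on $\fG\cR$, which, after unravelling $\cR = \cZ^{\lambda}$, becomes: $(\tau_1, \eta_1) \le (\tau_2, \eta_2)$ iff $\tau_1 \le_{\cZ} \tau_2$ and there exists $g \in \fG$ with $\eta_i = g\,\lambda(\tau_i)$ for $i=1,2$; and (iii) the hypothesis that $\cZ$ is $\fG$-compatible, which asserts $\fG_{\tau} = \fG_{\lambda(\tau)}$ for every $\tau \in \cZ$.

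First I would check that $F$ is well-defined. If $\eta = g_1\lambda(\tau) = g_2\lambda(\tau)$, then $g_2^{-1}g_1 \in \fG_{\lambda(\tau)} = \fG_\tau$, so $g_1\tau = g_2\tau$. Bijectivity then follows from the two properties of a generating subposet (Definition \ref{defn:symmetric_poset}): surjectivity is immediate from $\cT = \bigsqcup_{z\in\cZ}\fG\cdot z$, while for injectivity, if $F(\tau_1,\eta_1) = F(\tau_2,\eta_2)$ then $g_1\tau_1 = g_2\tau_2$ forces $\tau_1 = \tau_2$ (disjointness of the orbits in the decomposition), and then $g_2^{-1}g_1 \in \fG_{\tau_1} = \fG_{\lambda(\tau_1)}$ forces $\eta_1 = \eta_2$.

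For order preservation, the forward direction is clean: if $(\tau_1,\eta_1) \le (\tau_2,\eta_2)$ in $\fG\cR$, choose a common $g$ with $\eta_i = g\,\lambda(\tau_i)$, so $F(\tau_i,\eta_i) = g\tau_i$; condition (2) of Definition \ref{defn:symmetric_poset} applied to $\tau_1 \le_\cZ \tau_2$ with the common $g$ then yields $g\tau_1 \le_\cT g\tau_2$. Conversely, if $g_1\tau_1 \le_\cT g_2\tau_2$, condition (2) provides both $\tau_1 \le_\cZ \tau_2$ and a common $g \in \fG$ such that $g_i\tau_i = g\tau_i$ for $i=1,2$; then $g^{-1}g_i \in \fG_{\tau_i} = \fG_{\lambda(\tau_i)}$ yields $\eta_i = g_i\lambda(\tau_i) = g\,\lambda(\tau_i)$, which is exactly the witness required for $(\tau_1,\eta_1) \le (\tau_2,\eta_2)$ in $\fG\cR$.

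The main obstacle is essentially bookkeeping: the $\fG$-compatibility hypothesis is the bridge that allows one to convert every statement about stabilizers of elements of $\cZ$ into a statement about stabilizers of faces of $\Phi$ (and hence about how the second coordinate in $\fG\cR$ is determined up to the stabilizer), and vice versa. Once this translation is performed, the proof essentially reduces to the same matching of conditions as in the proof of Theorem \ref{thm:symposet}, so no new idea beyond the careful use of compatibility is needed.
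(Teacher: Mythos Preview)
Your proposal is correct and follows essentially the same approach as the paper, which simply states that the proof is the same as that of Theorem~\ref{thm:symposet} via the map $f\colon gz \mapsto (z, g\lambda(z))$. You construct the inverse map $F\colon \fG\cR \to \cT$ instead, but the verification of well-definedness, bijectivity, and order preservation is the same bookkeeping with stabilizers, relying on $\fG_\tau = \fG_{\lambda(\tau)}$ exactly as you describe.
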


\begin{proof}
  The proof is basically the same as that of Theorem \ref{thm:symposet} by defining a map $f: gz \mapsto (z, g\lambda(z))$, and proving that it is a poset isomorphism from $\cT$ to $\fG\cR$.
\end{proof}

Now we are ready to lay a general framework to realize symmetric posets as polytopes.
	The most prominent examples of this kind of realization problems are connected: Reiner-Ziegler Coxeter permuto-associahedra \cite{reiner1994coxeter}, Barali\'c-Ivanovi\'c-Petri\'c simple permuto-associahedron \cite{baralic_ivanovic_petric}, and Gaiffi's permutonestohedra \cite{gaiffi}.
	Our results provide a unified way to approach these problems.

	\subsection{A recipe for realizing a symmetric poset} \label{subsec:recipe}

Given a finite $\fG$-symmetric poset $\cF,$ Proposition \ref{prop:ZPPhi}, Theorem \ref{thm:symposet} and Theorem \ref{thm:sym-poset} provide a recipe for realizing $\cF$, summarized in the following three steps: 
\begin{enumerate}[label={\textbf{\Roman*}.}, leftmargin=10mm, itemsep=2mm]
  \item \textbf{Finding the generating set $\cZ$ and its associated map $\lambda$:} Let $\cT$ be the dual of $\cF$. Find a generating subposet $ \mathcal{Z} $ of $ \mathcal{T} $ that is $\fG$-compatible, and let $\lambda: \cZ \to \Sigma(\Phi)$ be the $\fG$-stabilizer-preserving map associated with $\cZ$. 
  Finally, compute $\cZ(\phi)$ for each $\phi \in \Sigma(\Phi)$ or at least for $\phi = \Phi$.
  \item \textbf{Realizing $\cZ^\lambda$ as the refined fundamental fan of a polytope:}
    \begin{enumerate}[label=(\alph*)] 
      \item\label{step:a} Realize the dual of $\cZ(\Phi)$ as a $\Phi$-placed and $\Psi$-appropriate polytope $ P $, and described the normal fan of $P$, and describe the $\Sigma(\Phi)$-poset $\cZ^\lambda$.

      \item\label{step:b} Verify that $\cR(P)$, the face poset of the refined fundamental fan of $P$, is isomorphic to $\cZ^\lambda$ as $\Sigma(\Phi)$-posets.

      \end{enumerate}
 
  \item \textbf{Symmetrizing the polytope:} Symmetrize $P$ to obtain its $\fG$-symmetrization $\fG(P)$, which is a desired realization of $\cF$.
\end{enumerate}

In general, Steps I and III in the recipe are straightforward. 
Verifying whether the dual of a given poset \(\mathcal{F}\) that is \(\mathfrak{G}\)-symmetric possesses a \(\mathfrak{G}\)-compatible generating subposet may require effort. However, if \(\mathcal{F}\) is finite, this verification is feasible.
Therefore, the approach laid out in the recipe reduces the problem of realizing a $\fG$-symmetric poset $\cF$ to the problem of realizing a $\fG$-generator of its dual described in Step II.

\subsubsection*{Alternative steps for Step II}

We note that Step \ref{step:a} is the key and most difficult part of Step II, as we have to construct a \emph{correct} polytope $P$ so that Step (b) will be successful. 
Therefore, one might want to split Step \ref{step:a} into two steps: 
\begin{enumerate}[label=(a.\roman*)]
  \item\label{step:ai} Realize the dual of $\cZ(\Phi)$ as a polytope $Q$ without worrying about whether it is $\Phi$-placed or $\Psi$-appropriate, and describe its normal fan.
  \item\label{step:aii} Find a suitable (invertible) affine transformation to convert $Q$ to a $\Phi$-placed and $\Psi$-appropriate polytope $P$, and compute its normal fan. 
\end{enumerate}
Finally, for Step \ref{step:b} of Step II, one can use various results we developed in this paper to complete this step. For example, it follows from Lemma \ref{lem:OmegaPiso} that Step (b) can be replaced with the following: 
\begin{enumerate}[label=(b')]
  \item\label{step:bi} Verify that $\Omega(P)= \{ (\sigma, \phi) \in \Sigma(P) \times \Sigma(\Phi) \ : \ \sigma^\circ \cap \phi^\circ \neq \emptyset \}$ ordered by \eqref{eq:OmegaPorder} is isomorphic to $\cZ^\lambda$ as $\Sigma(\Phi)$-posets.
\end{enumerate}
One benefit of using the alternative step \ref{step:bi} is that we can use the following lemma to turn the problem of verifying $\Sigma(\Phi)$-poset isomorphism to an equivalent poset isomorphism problem. We say a map $f: \cZ \to \Omega(P)$ is \emph{$\lambda$-consistent}, if for every $z \in \cZ$, we have $f(z) = (\sigma, \lambda(z))$ for some $\sigma$. 
\begin{lem}\label{lem:convertiso}
  Suppose $\cZ$, $\lambda$, and $P$ are produced as in Step I and Step II/\ref{step:a} of the recipe. 
  Then there exists a poset isomorphism $f$ from $\cZ$ to $\Omega(P)$ that is $\lambda$-consistent if and only if there exists a $\Sigma(\Phi)$-poset isomorphism $g$ from $\cZ^\lambda$ to $\Omega(P)$. 
\end{lem}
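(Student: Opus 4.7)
The plan is to exploit the fact that $\cZ^\lambda$, by its very construction (Definition \ref{defn:Sigma-poset}), is canonically isomorphic to $\cZ$ as a poset via the map $\iota: z \mapsto (z, \lambda(z))$. Thus a $\Sigma(\Phi)$-poset isomorphism $g$ from $\cZ^\lambda$ to $\Omega(P)$ and a $\lambda$-consistent poset isomorphism $f$ from $\cZ$ to $\Omega(P)$ should essentially be the same datum, just expressed on different domains. The entire argument will consist of showing that the correspondence $f \leftrightarrow g$ given by $f = g \circ \iota$ (equivalently $g = f \circ \iota^{-1}$) provides the required bijection between the two kinds of isomorphisms.

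First, assuming $g$ exists, I would define $f(z) := g(z, \lambda(z))$. Since $g$ preserves second entries, we have $f(z) = (\sigma_z, \lambda(z))$ for some $\sigma_z \in \Sigma(P)$, which is exactly the $\lambda$-consistency condition. Bijectivity of $f$ is inherited from bijectivity of $g$ together with that of $\iota$. Order-preservation in both directions follows because the order on $\cZ^\lambda$ is defined so that $(z_1, \lambda(z_1)) \le (z_2, \lambda(z_2))$ if and only if $z_1 \le_\cZ z_2$, so $\iota$ is itself a poset isomorphism from $\cZ$ onto $\cZ^\lambda$.

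Conversely, assuming $f$ exists, I would set $g(z, \lambda(z)) := f(z)$. The $\lambda$-consistency of $f$ guarantees that $g$ preserves the second entry of each pair, which is precisely what is required for $g$ to be a $\Sigma(\Phi)$-poset morphism in the sense of Definition \ref{defn:Sigma-poset}. Order-preservation of $g$ in both directions again follows from that of $f$ via the identification $\iota$, and bijectivity is transferred from $f$.

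I do not anticipate any real obstacle here: the lemma is essentially a bookkeeping statement that reconciles two equivalent formulations, and no nontrivial geometric input about $P$, $\fG$, or $\Phi$ is used. The one point worth being careful about is checking that $\iota$ is well-defined (i.e.\ that the pairs $(z, \lambda(z))$ are distinct for distinct $z \in \cZ$), which is immediate from the set-theoretic definition of $\cZ^\lambda$ as a subset of $\cZ \times \Sigma(\Phi)$ indexed by $\cZ$.
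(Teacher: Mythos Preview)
Your proposal is correct and takes essentially the same approach as the paper: both arguments use the canonical poset isomorphism $\iota$ (the paper calls it $h$) from $\cZ$ to $\cZ^\lambda$ given by $z \mapsto (z,\lambda(z))$, and transport $f$ to $g$ and back via composition with $\iota$ or $\iota^{-1}$. Your write-up is slightly more explicit in checking $\lambda$-consistency and second-entry preservation, but the underlying argument is identical.
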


\begin{proof} Let $h$ be the poset isomorphism from $\cZ$ to $\cZ^\lambda$ defined by mapping $z$ to $(z, \lambda(z))$. One checks that if $f$ is a poset isomorphism from $\cZ$ to $\Omega(P)$ that is $\lambda$-consistent, then $g :=f \circ h$ is a desired $\Sigma(\Phi)$-poset isomorphism $g$. Conversely, if $g$ is a $\Sigma(\Phi)$-poset isomorphism from $\cZ^\lambda$ to $\Omega(P)$, then $f := g \circ h^{-1}$ is a desired $\lambda$-consistent poset isomorphism. 
\end{proof}

\subsubsection*{Necessary conditions}

For the general problem of realizing a poset $\cF$ as a polytope, one often starts by checking whether $\cF$ satisfies some simple necessary conditions for it to be polytopal, e.g, whether $\cF$ has the Diamond property. 
Similarly, after one completes Step I and before moves on to Step II of our recipe, it is a good idea to verify a few known necessary conditions for $\cZ^\lambda$ to be realized as the refined fundamental fan $\cR(P)$ of a poltyope $P$. We list a few necessary conditions below, as consequences of Lemma \ref{lem:orderconsist}, Corollary \ref{cor:nonempty}, and Lemma \ref{lem:maximalelts}, respectively:
\begin{enumerate}[label=(N\arabic*)]
  \item\label{itm:N1} $\lambda$ is order-preserving, or equivalently, $\cZ^\lambda$ is order-consistent. 
  \item\label{itm:N2} $ \lambda $ is surjective, or equivalently, $\cZ(\phi)$ is nonempty for each $\phi \in \Sigma(\Phi)$.
  \item\label{itm:N3} The maximal elements of $\cZ(\phi)$ are of rank $\dim(\phi)$, for each $\phi \in \Sigma(\Phi)$.
  \end{enumerate}

  The necessary conditions listed above are all ones that can be easily verified before Step II. However, there are also necessary conditions that are harder to verify and are related to Step II: 
  \begin{enumerate}[label=(N\alph*)]
    \item\label{itm:Na} $\cZ(\Phi)$ is polytopal, i.e., it is isomorphic to the face poset of a polytope.
    \item\label{itm:Nb} There exists a fan $ \Gamma $ supported on the fundamental chamber $ \Phi $ such that $\cF(\Gamma)^\carr$ and $\cZ^\lambda$ are isomorphic $\Sigma(\Phi)$-posets. 
  \end{enumerate}
  One sees that the difficulty of checking condition \ref{itm:Na} is basically the same as that of completing Step \ref{step:ai}. 
	Although condition \ref{itm:Nb} seems to be related to Step \ref{step:b}, it really is a condition one should think of during Step \ref{step:a} or Step \ref{step:aii}, as we said above that we need to make sure that the polytope constructed in Step \ref{step:a} is a \emph{correct} one. 
	Furtheremore, since $\cZ^\lambda$ tells us all the combinatorics about the refined fundamental fan, constructing a $\Gamma$ such that $\cF(\Gamma)^\carr$ is isomorphic to $\cZ^\lambda$ as $\Sigma(\Phi)$-posets could be very enlightening for completing Step \ref{step:aii}. 
	Note that checking condition \ref{itm:Nb} could be completely independent from checking \ref{itm:Na}, as $\cF(\Gamma)^\carr$ does not have to be equal to $\cZ^\lambda$. 
	Therefore, we ask the following question:

\begin{ques}
  Are the necessary conditions \ref{itm:Na} and \ref{itm:Nb} sufficient to conclude that $\cZ^\lambda$ can be realized as a refined fundamental fan of a polytope?
	
\end{ques}

\subsection{Extended example}
In this part, we will use an extended example to illustrate how to use the recipe we presented in \S \ref{subsec:recipe} to realize a $\fG_d$-symmetric poset as a $\fG_d$-symmetrization of a polytope.

The $\fG_d$-symmetric poset that we will realize is built using elements from the following two posets and can be seen as the hybrid of these two posets at end of this section:

\begin{enumerate}
  \item The poset $ \mathcal{O}_{d} $ of ordered set partitions of $ [d] $ in Definition \ref{defn:ordered_set_partitions}.
  \item The truncated Boolean poset $\widehat{\cB}_{d-1}$ of nonempty subsets of $[d-1]$ in Definition \ref{def:Boolean}.  
\end{enumerate}
We start by describing the elements in the poset. 
\begin{defn}
Let $ d	$ be a positive integer.
A \emph{decorated} ordered set partition of $ [d] $ is pair $ \cD = ( \mathcal{S}, B) $, where $ \mathcal{S} = ( S_{1}, \dots, S_{k} )  \in \cO_d$ is an ordered partition of $[d]$ with $k$ parts and $ B $ is a nonempty subset of $\Type(\cS)$ (recall Definition \ref{defn:ordered_set_partitions}) if $k > 1$ and $B=[d-1]$ if $k=1$.
In particular, this means that there is a unique decorated ordered set partition with $ \cS = ([d]) $, namely $ ([d],[d-1]) $, and it behaves differently from the rest.  We call it the \emph{trivial} decorated ordered set partition.

For each decorated ordered set partition $\cD$ of $[d]$, we represent it with a \emph{bar representation} defined as follows: 
If $\cD = (\cS, B)$ is a nontrivial decorated ordered set partition, we represent it by putting a bar $ \mid $ between $ S_i $ and $ S_{i+1} $ if $t_i \in B$ and a double bar $\mm$ between $S_i$ and $S_{i+1}$ otherwise. If $\cD = ([d], [d-1])$, we simply put all the elements together with no bars at all.

\end{defn}

\begin{ex}
  Let $\cS = (245, 17, 3, 68) \in \cO_8$ and $B =\{3,6 \} \subseteq \Type(\cS)=\{ 3, 5, 6\}$. 
	Then the pair $\cD=(\cS, B)$ is a decorated ordered set partition of $[8]$ with its bar representation $245 \mid 17 \mm 3 \mid 68$.

	The trivial decorated ordered set partition of $[8]$ is $([8], [7])$ and its bar representation is $12345678$.
\end{ex}

\begin{defn}
	\label{def:hybrid}
	Let $ \mathcal{F}_{d} $ be the poset on the set of decorated ordered set partitions of $[d]$ where the order is defined by
	\begin{equation}\label{eq:Forder}
	\mathcal{D} = (\cS, B) \preceq \mathcal{D}'=(\cS', B') \quad \text{if and only if} \quad  \text{$\cS \preceq \cS'$ in $ \cO_{d} $ and $ B \preceq B' $ in $ \widehat{\cB}_{d-1}$ } .
      \end{equation}
\end{defn}

\begin{rem}
  Note that the partial order on $\cF_d$ can be equivalently defined using bar representations. 
	It is easier to describe the covering relations:  $ \mathcal{D} \precdot \mathcal{D}' $ in $\cF_d$ if the bar representation of $ \mathcal{D}' $ can be obtained from that of $ \mathcal{D} $ by one of the following operations:
\begin{enumerate}[label={\bf b}\roman*.]
		\item merging two consecutive blocks in the bar representation of $ \mathcal{D} $ that are separated by a double bar.
		\item replacing a double bar in the bar representation of $ \mathcal{D} $ by a single bar.
		\item merging all blocks together if there are no double bars but at least one single bar in the bar representation of $\mathcal{D}$.
	\end{enumerate}

\end{rem}

\begin{ex}
Here is a maximal chain of $\cF_5$ in its bar representation:
\begin{small}
\[
	5\mm 2 \mid 4 \mm 1 \mm 3 \ \precdot \ 5 \mm 2 \mid 4 \mm 13 \ \precdot \ 5 \mid 2 \mid 4 \mm 13 \ \precdot \ 5 \mid 2 \mid 4 \mid 13 \ \precdot \ 12345.
\]
\end{small}
\end{ex}
It is straightforward to verify the following result from definitions:
\begin{lem}
  The poset $ \mathcal{F}_{d} $ is graded of rank $d-1$ where 
  the corank of a non-maximal element $\cD=(\cS, B)$ is $d - |B|$, which is the same as $1$ plus the number of double bars in its bar representation. 
\end{lem}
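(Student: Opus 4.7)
The plan is to first identify the top element of $\cF_d$, then enumerate all covering relations of the poset, and finally exhibit a corank function that decreases by exactly $1$ along every cover. The unique maximum of $\cF_d$ is the trivial decorated ordered set partition $([d],[d-1])$: for any $(\cS,B)$ we have $\cS \preceq ([d])$ in $\cO_d$ and $B \subseteq [d-1]$ by construction, so $(\cS,B) \preceq ([d],[d-1])$ by \eqref{eq:Forder}.

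Next I would verify that the covering relations are exactly those described by operations b.i, b.ii, b.iii. Suppose $\cD=(\cS,B) \precdot \cD'=(\cS',B')$. If $\cD'$ is not the trivial element, then $B' \subseteq \Type(\cS')$, and we cannot have both $\cS \prec \cS'$ and $B \prec B'$: in that case $B \subsetneq B' \subseteq \Type(\cS') \subseteq \Type(\cS)$, so $(\cS,B')$ would be an element of $\cF_d$ strictly between them. Thus exactly one of $\cS$ or $B$ changes: a cover with $\cS=\cS'$ is operation b.ii (adding one element of $\Type(\cS)$ to $B$), while a cover with $B=B'$ must merge two adjacent blocks of $\cS$ at a boundary index $t_i \notin B$ (so that $B$ remains inside $\Type(\cS')$), i.e., operation b.i. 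If instead $\cD'$ is the trivial element and $\cD$ is not, an analogous argument with the intermediate element $(\cS,\Type(\cS))$ (which lies in $\cF_d$ because $|\cS|\geq 2$) forces $\cD=(\cS,\Type(\cS))$, which is operation b.iii.

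Now define the candidate corank $\rho(\cD)=|\cS|-|B|$ for non-maximal $\cD=(\cS,B)$, and $\rho(([d],[d-1]))=0$. A direct check shows $\rho$ drops by exactly $1$ along each cover: b.i drops one block while keeping $B$ fixed; b.ii keeps $\cS$ fixed while adding one element to $B$; and in b.iii, $(\cS,\Type(\cS))$ has $\rho=|\cS|-(|\cS|-1)=1$, covering the top of corank $0$. Hence $\cF_d$ is graded with corank $\rho$. The rank of the poset is the corank of its minimal elements, which are the $(\cS,\{t\})$ with $\cS$ a partition into singletons (so $|\cS|=d$ and $|B|=1$), giving rank $d-1$. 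Finally, for non-maximal $\cD$, the bar representation has $|\cS|-1$ bars total, of which $|B|$ are single; the number of double bars is thus $|\cS|-1-|B|$, and adding $1$ recovers $|\cS|-|B|=\rho(\cD)$, matching the bar-representation formula in the statement.

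The main delicate point is the cover-relation analysis, because the trivial element behaves differently from all others --- the convention $B=[d-1]$ is forced despite $\Type([d])=\emptyset$ --- so b.iii is the unique cover that simultaneously changes both $\cS$ and $B$ and hence cannot be detected by single-coordinate moves. I would also flag that the formula ``$d-|B|$'' in the statement appears to be a typographical substitution for ``$|\cS|-|B|$'': already for $d=3$, a non-trivial two-block element has $|\cS|=|B|+1=2$ and lies at corank $1$ (one cover below the top), whereas $d-|B|=2$ would incorrectly place it two levels down.
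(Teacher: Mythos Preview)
The paper does not actually supply a proof of this lemma; it simply states that the result ``is straightforward to verify from definitions.'' Your argument is a correct and appropriately detailed verification: you identify the unique maximum, classify the covers as exactly the three operations \textbf{b}i--\textbf{b}iii, and exhibit the corank function $\rho(\cS,B)=|\cS|-|B|$ which drops by one along each cover. This is more than the paper provides and is entirely sound.

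Your final remark about the typo is also correct, and worth emphasizing. The formula ``$d-|B|$'' in the statement is wrong as written; the intended expression is $|\cS|-|B|$. This is confirmed internally by the paper: the companion description ``$1$ plus the number of double bars'' equals $(|\cS|-1-|B|)+1=|\cS|-|B|$, and the paper's own example of a maximal chain in $\cF_5$ contains the element $5\mm 2\mid 4\mm 13$ with $|\cS|=4$, $|B|=1$, two double bars, and corank $3$, whereas $d-|B|=4$ would be incorrect. So you have not misread anything; the statement has a typographical error and your correction is the right one.
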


Each permutation $ \pi \in \fS_d $ acts on each decorated ordered set partition of $[d]$ naturally. Thus, $\cF_d$ is a $\fS_d$-symmetric poset. Below, we use the 3-step recipe we laid out in \S \ref{subsec:recipe} to solve the following realization problem:

\begin{prob}\label{prob:realization}
  Determine whether we can construct a polytope $P$ such that $\cF_d$ is the face poset of its symmetrization $\fG(P)$.
\end{prob}

Before we start, recall that Lemma \ref{lem:faces_Phi} says that the map $\cS \mapsto \sigma_\cS$ gives a poset isomorphism from $\cO_d^\std$ to the dual of the poset $\cF(\Sigma(\Phi_{d-1}))$, where $\sigma_\cS$ is defined in \eqref{eq:osp_cone_alt}. 
In our process of realizing $\cF_d$ below, we will always use $\sigma_\cS$ to describe cones in $\Sigma(\Phi_{d-1})$.

\subsection*{Step I: Finding generating set and its associated map}

Let $\cT_d$ be the dual poset of $\cF_d$ and let $\cZ_d$ be its induced subposet on all the decorated \emph{standard} ordered set partitions, i.e, all the $\cD=(\cS, B)$ with $\cS \in \cO_d^{\std}$. 
Then $ \mathcal{T}_{d} $ is $ \fS_{d} $-symmetric and $ \mathcal{Z}_{d} $ is a generating subposet that is $\fS_d$-compatible. More precisely, for each $\cD=(\cS, B) \in \cZ_d,$ we have
\begin{equation}\label{eq:stabilizers_example}
(\fS_d)_\cD = (\fS_d)_\cS = (\fS_d)_{\sigma_\cS}.
\end{equation} 
Hence, the $\fS_d$-stabilizer-preserving map associated with $\cZ_d$ is defined as $\lambda: \cZ_{d} \to \Sigma( \Phi_{d-1} )$ by $ \lambda(\cS, B) = \sigma_\cS$. 
Moreover, for each $([d])\neq\cS \in \cO_{d}^\std,$ we have that $\cZ_d(\sigma_\cS)$ is the subposet of $\cZ_d$ induced on $\{ (\cS, B) \ : \ \emptyset \neq B \subseteq \Type(\cS)\}$, which is clearly isomorphic to the dual of the truncated Boolean poset $\widehat{\cB}_{|\cS|-1}$.

In Figure \ref{fig:z3_hybrid} we have depicted $ \mathcal{Z}_{4} $ in bar representations in which different $\cZ_4(\sigma_\cS)$'s are highlighted with different colors.

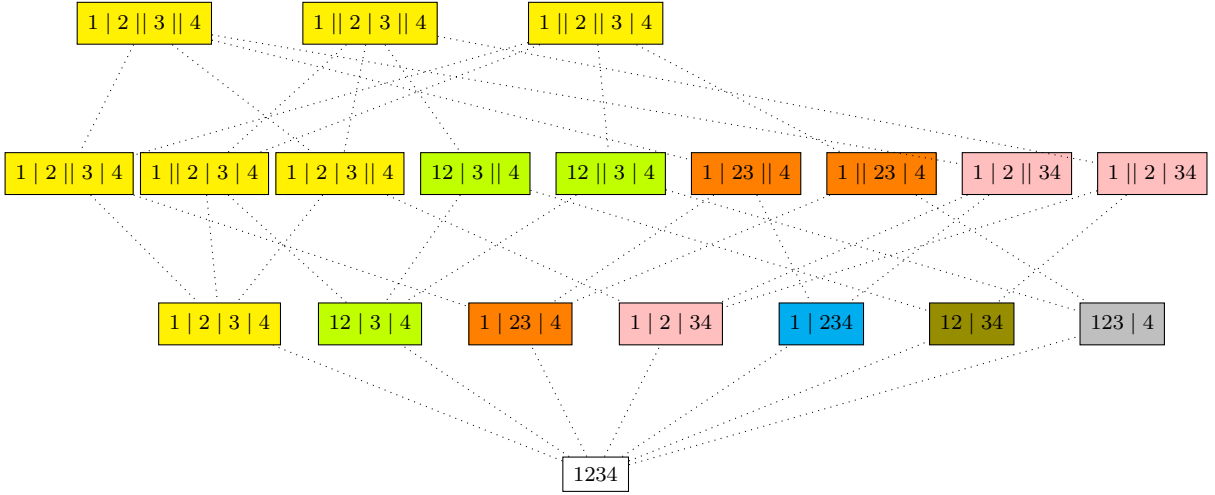
\begin{figure}
\begin{tikzpicture}
	\node at (1,4) [rectangle, draw, fill=yellow] (18){\tiny $ 1 \mid 2\mm 3\mm 4 $};
	\node at (4,4) [rectangle, draw, fill=yellow] (19){\tiny $ 1\mm 2 \mid 3\mm 4 $};
	\node at (7,4) [rectangle, draw, fill=yellow] (20){\tiny $ 1\mm 2\mm 3 \mid 4 $};
	
	\node at (0*1.8,2) [rectangle, draw, fill=yellow] (14){\tiny $ 1 \mid 2\mm 3 \mid 4 $};
	\node at (1*1.8,2) [rectangle, draw, fill=yellow] (15){\tiny $ 1\mm  2 \mid 3 \mid 4 $};
	\node at (2*1.8,2) [rectangle, draw, fill=yellow] (16){\tiny $ 1 \mid 2 \mid 3 \mm  4 $};
	\node at (3*1.8,2) [rectangle, draw, fill = lime] (11){\tiny $ 12 \mid 3\mm 4 $};
	\node at (4*1.8,2) [rectangle, draw, fill = lime] (13){\tiny $ 12\mm  3 \mid 4 $};
	\node at (5*1.8,2) [rectangle, draw, fill=orange] (9){\tiny $ 1 \mid 23\mm 4 $};
	\node at (6*1.8,2) [rectangle, draw, fill = orange] (12){\tiny $ 1\mm 23 \mid 4 $};
	\node at (7*1.8,2) [rectangle, draw, fill = pink] (8){\tiny $ 1 \mid 2\mm 34 $};
	\node at (8*1.8,2) [rectangle, draw, fill = pink] (10){\tiny $ 1 \mm  2 \mid 34 $};
	
	\node at (2,0) [rectangle, draw, fill=yellow] (3){\tiny $ 1 \mid 2 \mid 3 \mid 4 $};
	\node at (4,0) [rectangle, draw, fill = lime] (4){\tiny $ 12 \mid 3 \mid 4 $};
	\node at (6,0) [rectangle, draw, fill = orange] (5){\tiny $ 1 \mid 23 \mid 4 $};
	\node at (8,0) [rectangle, draw, fill = pink] (6){\tiny $ 1 \mid 2 \mid 34 $};
	\node at (10,0) [rectangle, draw, fill=cyan] (1){\tiny $ 1 \mid 234 $};
	\node at (12,0) [rectangle, draw, fill = olive] (2){\tiny $ 12 \mid 34 $};
  \node at (14,0) [rectangle, draw, fill = lightgray] (0){\tiny $ 123 \mid 4 $};

  \node at (7,-2) [rectangle, draw] (00){\tiny $ 1234 $};

	\draw[dotted] (18)--(8) (18)--(9) (18)--(14) (18)--(16) (19)--(10) (19)--(11) (19)--(15) (19)--(16) (20)--(12) (20)--(13) (20)--(15) (20)--(14);
	\draw[dotted] (8)--(1) (8)--(6) (9)--(5) (9)--(1) (10)--(6) (10)--(2) (11)--(2) (11)--(4) (12)--(5) (12)--(0) (13)--(4) (13)--(0) (14)--(3) (14)--(5) (15)--(3) (15)--(4) (16)--(3) (16)--(6) ;
	\draw[dotted] (00)--(0) (00)--(1) (00)--(2) (00)--(3) (00)--(4) (00)--(5) (00)--(6);
	\end{tikzpicture}
	\caption{Hasse diagram for $ \mathcal{Z}_{4} $ a generating set for $ \mathcal{T}_{4} $. Different colors highlight $\cZ_4(\sigma_\cS)$ for different $\cS$'s.}
\label{fig:z3_hybrid}
\end{figure}

Following Definition \ref{defn:Sigma-poset},  we have that $\cZ_d^\lambda$ is the $\Sigma(\Phi_{d-1})$-poset on the set
\[ \{ ( (\cS, B), \sigma_\cS) \ : \ (\cS, B) \in \cZ_d\},\]
ordered by
\[ ((\cS, B), \sigma_\cS) \le ((\cS', B'), \sigma_{\cS'}) \text{ if and only if } (\cS, B) \le (\cS', B') \text{ in $\cZ_d$}.\]

\subsection*{Step II: Realizing \texorpdfstring{$\cZ_d^\lambda$ as the refined fundamental fan of a polytope}{refined fundamental fan}}
\ 
In this part, instead of using the original steps \ref{step:a} and \ref{step:b} described in the recipe, we will use the alternative steps \ref{step:ai}, \ref{step:aii}, and \ref{step:bi}.

\subsubsection*{Step \ref{step:ai}} 
Note that $\Phi_{d-1} = \sigma_{\cS_0},$ where $\cS_0 = (1, 2, \dots, d) \in \cO_d^\std$. Hence, the dual of $\cZ_d(\Phi_{d-1})$ is isomorphic to the truncated Boolean poset $\widehat{\cB}_{d-1},$ which is the face poset of a $(d-2)$-simplex. 
The most natural realization of $\widehat{\cB}_{d-1}$ is the standard $(d-2)$-simplex $ \Delta_{d-2} = \conv \left\{ \mathbf{e}_{i} ~:~ 1\leq i \leq d-1 \right\} \subseteq \mathbb{R}^{d-1}$. Following notations in Example \ref{ex:simplex}, we can describe all of its normal cones: 
for each $B \in \widehat{\cB}_{d-1}$,
\begin{equation}\label{eq:ncone_simplex_shift} \ncone(\Delta_B, \Delta_{d-2}) = \sigma_{d-1,B} = \left\{ \y \in \R^{d-1}  \ : \ 
    \begin{array}{c} y_i = y_j \text{ if $i,j \in B$} \\
y_i \le y_j \text{ if $i \notin B$ and $j \in B$} \end{array} \right\}.\end{equation}

\subsubsection*{Step \ref{step:aii}}
\ 
Let $ \bgamma \in \mathbb{R}^{d} $ be any vector. 
Recall from Example \ref{defn:typeA} that $U_{d-1}^\bgamma$ is a translation of the vector space $V_{d-1}$, which has a basis $\{\a_i = -\e_i+\e_{i+1} \ : \ 1 \le i \le d-1\}$.
We define an invertible affine transformation from $\R^{d-1}$ to $U_{d-1}^\bgamma \subseteq \mathbb{R}^{d}$ as follows: 
for each vector $ \x =(x_{1}, \dots, x_{d-1}) =  \sum_{i=1}^{d-1} x_i \e_i \in \mathbb{R}^{d-1} $ we define
\begin{equation}
\label{eqn:phi_alpha}
T_\bgamma(\x) = T_\bgamma\left( \sum_{i=1}^{d-1} x_i \e_i \right)\defeq \bgamma + \sum_{i=1}^{d-1}  x_i \a_i. 
\end{equation}

We now use a particular $\bgamma$ to construct our polytope $P$ (that will be proved to be the polytope we are seeking for). 
\begin{construct}
  Let  $\bgamma_0 = (1^{2},2^{2},\dots,d^{2}) \in \mathbb{R}^{{d}} $ and $ P = T_{\bgamma_0}(\Delta_{d-2}) $. 
\end{construct}

\begin{lem}
 
  The polytope $P= T_{\bgamma_0}(\Delta_{d-2})$ is $ \Phi_{d-1} $-placed and $ \Psi_{d-1} $-appropriate. 
\end{lem}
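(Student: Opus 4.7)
The plan is to verify the two conditions directly by unpacking the definitions of $T_{\bgamma_0}$ and the fundamental chambers $\Phi_{d-1}$, $\Psi_{d-1}$.

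First, I would compute the vertices of $P$ explicitly. Since $\Delta_{d-2}=\conv\{\e_1,\dots,\e_{d-1}\}$ and $T_{\bgamma_0}$ is affine, the vertex set of $P$ is $\{T_{\bgamma_0}(\e_j) : 1\le j \le d-1\}$, and $T_{\bgamma_0}(\e_j) = \bgamma_0 + \a_j = \bgamma_0 - \e_j + \e_{j+1}$. Explicitly, the $k$th coordinate of $T_{\bgamma_0}(\e_j)$ is $k^2$ if $k\notin\{j,j+1\}$, is $j^2-1$ if $k=j$, and is $(j+1)^2+1$ if $k=j+1$.

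For $\Psi_{d-1}$-appropriateness, I would check that each vertex has strictly increasing coordinates, so that the vertex lies in $\Psi_{d-1}^\circ$; convexity of $\Psi_{d-1}^\circ$ then forces $P\subseteq\Psi_{d-1}^\circ$. The only nontrivial comparisons involve the perturbed entries: $(j-1)^2 < j^2-1$ reduces to $j>1$ (vacuous when $j=1$); $j^2-1 < (j+1)^2+1$ reduces to $2j+3>0$; and $(j+1)^2+1 < (j+2)^2$ reduces to $2j+2>0$. All the remaining comparisons are of the form $k^2<(k+1)^2$. These checks are routine.

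For $\Phi_{d-1}$-placedness, the task is to exhibit $\w\in\Phi_{d-1}^\circ$ on which $\langle\w,\cdot\rangle$ is constant on $P$. Since $T_{\bgamma_0}(\x) = \bgamma_0 + \sum_{i=1}^{d-1} x_i \a_i$ with $\sum x_i = 1$ on $\Delta_{d-2}$, one has
\[
\langle \w, T_{\bgamma_0}(\x) \rangle = \langle \w, \bgamma_0 \rangle + \sum_{i=1}^{d-1} x_i\,[\w,\a_i],
\]
which is constant on $P$ if and only if $[\w,\a_i]$ is the same value for all $i$. Using $\a_i=-\e_i+\e_{i+1}$, this condition becomes $w_{i+1}-w_i = c$ for all $i$, i.e., $\w$ is an arithmetic progression. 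Taking the representative $\w=(0,1,2,\dots,d-1)$ yields strictly increasing entries, so $\w\in\Phi_{d-1}^\circ$, completing the verification.

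No substantial obstacle is expected: the whole argument is a direct computation once one writes the vertices of $P$ and observes that the dual basis $\{\f_i^{(d)}\}$ together with $\a_i = -\e_i+\e_{i+1}$ makes the inner products $[\w,\a_i]$ trivial to compute.
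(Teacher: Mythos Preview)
Your proposal is correct and follows essentially the same approach as the paper's proof: both verify $\Psi_{d-1}$-appropriateness by checking that each vertex $\bgamma_0+\a_j$ has strictly increasing entries, and both exhibit an arithmetic-progression functional in $\Phi_{d-1}^\circ$ (the paper uses $(1,2,\dots,d)$, you use $(0,1,\dots,d-1)$, which agree modulo $\1$) to witness $\Phi_{d-1}$-placedness. Your version simply spells out the inequality checks and derives the form of the constant functional rather than positing it.
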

\begin{proof}
  The vertices of $P$ are $\{ \bgamma + \a_i \ : \ 1 \le i \le d-1\}$. It is easy to verify that all vertices have strictly increasing entries, and hence are in $\Psi_{d-1}^\circ$. Therefore, $P \subseteq \Psi_{d-1}^\circ$, and thus is $ \Psi_{d-1} $-appropriate. Let $ \balpha = (1,2,\dots,d) $.
  As the functional $ \langle \balpha, \cdot \rangle $ is constant on all the vertices of $P$, we conclude it is constant on $P.$ Since $ \balpha \in \Phi_{d-1}^\circ $, we have that $ P $ is $ \Phi_{d-1} $-placed.
\end{proof}

\begin{rem}
The particular choice of $ \bgamma_0  $ is not too important but notice that taking $ \bgamma_0 = \balpha $ does not work since the resulting vertices are not strictly increasing.
\end{rem}

We can describe the normal cones of $ T_{\bgamma}(\Delta_{d-2}) $ (for any $\bgamma$) via the following basic lemma. 
Recall again from Example \ref{defn:typeA} that $W_{d-1}$ is the dual space of $V_{d-1}$ and has a corresponding dual basis $\left\{\f_{1}^{(d)}, \dots, \f_{d-1}^{(d)} \right\}$ defined as in \eqref{eq:defnfi}.
Also, for convenience, for each $\w \in W_{d-1}$, we define its \emph{first difference vector} of $\w$ to be
\begin{equation}
	\bdelta(\w) = (w_2-w_1, w_3-w_2, \dots, w_d-w_{d-1}) \in \mathbb{R}^{d-1},
\end{equation}
and for each $1 \le i \le d-1,$ we denote its $i$th entry by $\delta_i(\w) := w_{i+1} - w_{i}.$

\begin{lem}\label{lem:ncone_transformation} 
	Let $ Q \subseteq \mathbb{R}^{d-1} $ be a polytope and its normal fan is defined in $\R^{d-1}$, which is identified with its own dual via the inner product $ \left[\cdot, \cdot\right] $.
Then for any face $ F \subseteq Q  $ we have that 
\begin{align}
  \ncone(T_{\bgamma}(F),T_{\bgamma}(Q)) =& \left\{ \sum_{i=1}^{d-1} y_i \f_i^{(d)} \in W_{d-1} ~:~ \y \in \ncone(F,Q) \right\} \label{eq:ncone_1st} \\
  =& \left\{ \w \in W_{d-1} ~:~ \bdelta(\w) \in \ncone(F,Q) \right\}. \label{eq:ncone_2nd}
\end{align}
\end{lem}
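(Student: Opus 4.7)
The plan is to reduce everything to understanding the linear part of $T_{\bgamma}$ and its dual. First, I would write $T_{\bgamma}(\x)=\bgamma+L(\x)$ where $L:\R^{d-1}\to V_{d-1}$ is the linear map defined by $L(\e_i)=\a_i$. Since a translation by $\bgamma$ does not alter any normal cones, the entire content of the lemma concerns the behavior of normal cones under the linear map $L$.

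Next, I would identify the dual map $L^{*}:W_{d-1}\to\R^{d-1}$ concretely in two equivalent ways, corresponding to the two displayed formulas in the lemma. On the one hand, since $\{\a_1,\dots,\a_{d-1}\}$ and $\{\f_1^{(d)},\dots,\f_{d-1}^{(d)}\}$ are dual bases (Example \ref{defn:typeA}), writing $\w=\sum_{i=1}^{d-1}y_i\f_i^{(d)}$ gives
\[
\langle \w,L(\x)\rangle=\sum_{i=1}^{d-1}x_i\langle \w,\a_i\rangle=\sum_{i=1}^{d-1}x_iy_i=[\y,\x],
\]
so $L^{*}(\w)=\y=(y_1,\dots,y_{d-1})$. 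On the other hand, for any representative $(w_1,\dots,w_d)$ of $\w$, a direct computation with the inner product gives $[\w,\a_j]=w_{j+1}-w_j=\delta_j(\w)$, hence $L^{*}(\w)=\bdelta(\w)$. Comparing the two expressions also confirms that the two sets appearing on the right of \eqref{eq:ncone_1st} and \eqref{eq:ncone_2nd} are equal, and in particular that $\bdelta(\w)=\y$ whenever $\w=\sum y_i\f_i^{(d)}$.

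Finally, I would establish the containment characterizing $\ncone(T_{\bgamma}(F),T_{\bgamma}(Q))$. By Definition \ref{defn:normal}, $\w$ lies in this normal cone if and only if $\langle\w,T_{\bgamma}(\x)\rangle\ge\langle\w,T_{\bgamma}(\x')\rangle$ for all $\x\in F$ and $\x'\in Q$. Subtracting and using that $T_{\bgamma}(\x)-T_{\bgamma}(\x')=L(\x-\x')\in V_{d-1}$ (so the pairing reduces to the inner product on $V_{d-1}$), this becomes $[\w,L(\x-\x')]\ge 0$, which by the adjoint identity above equals $[L^{*}(\w),\x-\x']\ge 0$. That condition is exactly $L^{*}(\w)\in\ncone(F,Q)$. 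Substituting the two descriptions of $L^{*}(\w)$ from the previous step yields \eqref{eq:ncone_1st} and \eqref{eq:ncone_2nd} respectively.

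I do not anticipate a genuine obstacle; the only delicate point is bookkeeping the identification of $W_{d-1}=\R^d/\langle\1\rangle$ with $V_{d-1}^{*}$ via the inner product, and the fact that the affine offset $\x_0$ in the pairing $\langle\w,\u\rangle=[\w,\u-\x_0]$ drops out once we take differences, so the answer is independent of the choice of $\bgamma$.
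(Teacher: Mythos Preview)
Your proposal is correct and follows essentially the same route as the paper: both reduce to the key identity $\langle \w, L(\x)\rangle = [L^{*}(\w),\x]$ (the paper writes this directly as $[\x-\x',\y]=\langle T_{\bgamma}(\x)-T_{\bgamma}(\x'),\sum y_i\f_i^{(d)}\rangle$ without naming $L^{*}$), and then observe that $y_i=\delta_i(\w)$ to pass from \eqref{eq:ncone_1st} to \eqref{eq:ncone_2nd}. Your framing via the adjoint map is a bit more explicit, but the underlying computation is identical.
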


\begin{proof}
  For any $\x, \x', \y \in \R^{d-1},$ one checks that $[ \x - \x', \y ] = \left\langle T_\bgamma(\x) - T_\bgamma(\x'), \sum_{i=1}^{d-1} y_i \f_i^{(d)}\right\rangle.$
  Hence, we have $ [\x, \y] \le [\x', \y]$ if and only if $\left\langle T_\bgamma(\x), \sum_{i=1}^{d-1} y_i \f_i^{(d)}  \right\rangle \le \left\langle T_\bgamma(\x'), \sum_{i=1}^{d-1} y_i \f_i^{(d)}  \right\rangle.$
This, together with the definition of normal cones, implies Equality \eqref{eq:ncone_1st}.

Letting $\sum_{i=1}^{d-1} y_i \f_i^{(d)} = \w = \sum_{i=1}^d w_i \e_i,$ we obtain the relation that 
$y_i = w_{i+1}-w_i = \delta_i(\w),$ for each $1 \le i \le d-1$. Hence, Equality \eqref{eq:ncone_2nd} follows. 
\end{proof}

We now describe the normal fan of  $ T_{\bgamma}(\Delta_{d-2}) $.
\begin{lem}\label{lem:nu_cone}
  For each $ B \in \widehat{\cB}_{d-1} $, the normal cone $\ncone(T_\bgamma(\Delta_B), T_\bgamma(\Delta_{d-2}))$ of $ T_\bgamma(\Delta_{d-2}) $ at the face $T_\bgamma(\Delta_B)$ is given by
\begin{small}
\begin{equation}\label{eq:ncone_P}
  \nu_B \defeq\left\{ \w \in W_{d-1}  \ : \ 
    \begin{array}{c} \delta_i(\w) = \delta_j(\w) \text{ if $i,j \in B$} \\
\delta_i(\w) \le \delta_j(\w) \text{ if $i \notin B$ and $j \in B$} \end{array} \right\}.
\end{equation}
\end{small}
Moreover, the map $B \mapsto \nu_B$ is a poset isomorphism from $\widehat{\cB}_{d-1}$ to the dual of the face poset of $\Sigma(T_\bgamma(\Delta_{d-2}))$.
\end{lem}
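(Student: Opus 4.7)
The plan is to establish both conclusions by pulling everything back through the affine map $T_\bgamma$ to the standard simplex $\Delta_{d-2}$, whose normal-cone structure was already described in Example \ref{ex:simplex} (formula \eqref{eq:ncone_simplex_shift}).

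For the first claim, I would directly apply Lemma \ref{lem:ncone_transformation} with $Q = \Delta_{d-2}$ and $F = \Delta_B$. That lemma tells us
\[
  \ncone\bigl(T_\bgamma(\Delta_B),T_\bgamma(\Delta_{d-2})\bigr) = \{\w \in W_{d-1} : \bdelta(\w) \in \ncone(\Delta_B,\Delta_{d-2})\},
\]
and by \eqref{eq:ncone_simplex_shift} we have $\ncone(\Delta_B,\Delta_{d-2})=\sigma_{d-1,B}$. Substituting $\y = \bdelta(\w)$ into the defining inequalities of $\sigma_{d-1,B}$ (i.e., $y_i = y_j$ for $i,j \in B$ and $y_i \le y_j$ for $i \notin B, j \in B$) produces exactly the inequalities defining $\nu_B$ in \eqref{eq:ncone_P}. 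This is a purely mechanical substitution and requires no serious argument.

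For the second claim, I would chain together three known order-reversing/preserving bijections. Example \ref{ex:simplex} gives an order isomorphism $B \mapsto \Delta_B$ from $\widehat{\cB}_{d-1}$ to the face poset $\cF(\Delta_{d-2})$. Since $T_\bgamma$ is an invertible affine map, $F \mapsto T_\bgamma(F)$ is an order isomorphism $\cF(\Delta_{d-2}) \to \cF(T_\bgamma(\Delta_{d-2}))$. Finally, Lemma \ref{lem:face&fan} says that $F \mapsto \ncone(F,T_\bgamma(\Delta_{d-2}))$ is an order-reversing bijection onto $\cF(\Sigma(T_\bgamma(\Delta_{d-2})))$. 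Composing these yields the claimed poset isomorphism $B \mapsto \nu_B$ from $\widehat{\cB}_{d-1}$ to the dual of $\cF(\Sigma(T_\bgamma(\Delta_{d-2})))$.

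I do not expect any significant obstacle here: the first part is a direct application of the transformation lemma plus the known description of normal cones of the simplex, and the second part is a short chain of already-established bijections. The only care needed is to verify that the order-reversing bijection from $\cF(Q)$ to $\cF(\Sigma(Q))$ in Lemma \ref{lem:face&fan} still applies to $T_\bgamma(\Delta_{d-2}) \subseteq U_{d-1}^\bgamma$, since the polytope now lives in an affine subspace rather than a full vector space; but this is exactly the setup discussed in Section \ref{sec:prelim}, so no adjustment is needed.
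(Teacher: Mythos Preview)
Your proposal is correct and follows essentially the same approach as the paper: the paper also derives \eqref{eq:ncone_P} by applying Lemma \ref{lem:ncone_transformation} to \eqref{eq:ncone_simplex_shift}, and obtains the poset isomorphism by combining Lemma \ref{lem:face&fan}, Example \ref{ex:simplex}, and the fact that $T_\bgamma$ is an invertible affine transformation. Your write-up simply spells out the composition of bijections more explicitly.
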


\begin{proof}
  Applying Lemma \ref{lem:ncone_transformation} to Equation \eqref{eq:ncone_simplex_shift}, we immediately obtain a description for the normal cones of $ T_{\bgamma}(\Delta_{d-2}) $ as shown in \eqref{eq:ncone_P}. 

  The second conclusion the follows Lemma \ref{lem:face&fan}, Example \ref{ex:simplex}, and the fact that $T_\bgamma$ is an invertible affine transformation. 
\end{proof}

\subsubsection*{Step \ref{step:bi}}
Recall that $\Omega(P)$ is the poset on $\{ (\sigma, \phi) \in \Sigma(P) \times \Sigma(\Phi) \ : \ \sigma^\circ \cap \phi^\circ \neq \emptyset \}$ with ordering relation
\begin{equation}\label{eq:OmegaPorder-copy} (\sigma, \phi) \le (\sigma', \phi') \text{ in $\Omega(P)$ \quad if and only if \quad} \sigma \subseteq \sigma' \text{ and } \phi \subseteq \phi'. 
\end{equation}

We first figure out the set $\Omega(P)$ when $P = T_{\bgamma_0}(\Delta_{d-2})$ by giving a characterization for when $\sigma_\cS^\circ \cap \nu_B^\circ \neq \emptyset$.

\begin{prop}\label{prop:charintersect}
  For $\cS \in \cO_d^\std$ and $B \in \widehat{\cB}_{d-1},$ we have $\sigma_\cS^\circ \cap \nu_B^\circ \neq \emptyset$ if and only if $(\cS, B) \in \cZ_d$, that is, $(\cS, B)$ satisfies one of the followings conditions:
  \begin{enumerate}
    \item $\cS = ([d])$ and $B=[d-1].$
    \item $\cS \neq ([d])$ and $B$ is a (nonempty) subset of $\Type(\cS)$.
  \end{enumerate}
  As a consequence, for $ P = T_{\bgamma_0}(\Delta_{d-2}) $, we have
  \[ \Omega(P) = \{ (\nu_B, \sigma_\cS) \ : \ (\cS, B) \in \cZ_d\}.\]
\end{prop}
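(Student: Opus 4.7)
The plan is to translate both $\sigma_\cS^\circ$ and $\nu_B^\circ$ into conditions on the first difference vector $\bdelta(\w) = (\delta_1(\w), \dots, \delta_{d-1}(\w))$, where the proposition then becomes a combinatorial compatibility check. From \eqref{eq:osp_cone_alt} and Lemma \ref{lem:faces_Phi} (together with the fact that $\cS$ is standard, so consecutive indices are either in the same block or lie in adjacent blocks), I would first show that
\[ \w \in \sigma_\cS^\circ \iff \delta_i(\w) = 0 \text{ for } i \notin \Type(\cS), \text{ and } \delta_i(\w) > 0 \text{ for } i \in \Type(\cS). \]
Similarly, from \eqref{eq:ncone_P}, I would show that
\[ \w \in \nu_B^\circ \iff \text{there exists } c \in \R \text{ with } \delta_i(\w) = c \text{ for } i \in B \text{ and } \delta_i(\w) < c \text{ for } i \notin B. \]

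Next I would split into cases based on $\cS$. When $\cS = ([d])$, we have $\sigma_\cS = \{0\}$ in $W_{d-1}$, so the intersection is nonempty if and only if $0 \in \nu_B^\circ$. Since $\bdelta(0) = 0$, the equality conditions hold automatically, while the strict inequality conditions ``$0 < c$ for each $i \notin B$'' can be satisfied only when $\{i \notin B\}$ is empty, i.e., when $B = [d-1]$. This yields case (1). When $\cS \neq ([d])$ (so $\Type(\cS) \neq \emptyset$), I would prove both directions. For sufficiency, assuming $\emptyset \neq B \subseteq \Type(\cS)$, I would explicitly construct a point $\w$ in the intersection by choosing $c > 0$, setting $\delta_i(\w) = c$ for $i \in B$, choosing $0 < \delta_i(\w) < c$ for $i \in \Type(\cS) \setminus B$, and setting $\delta_i(\w) = 0$ for $i \notin \Type(\cS)$; the vector $\w$ is then recovered by cumulative summation (well-defined as an element of $W_{d-1}$). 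For necessity, given $\w$ in the intersection with common value $c$, I would rule out $c = 0$ (which would force $\Type(\cS) = \emptyset$, contradicting $\cS \neq ([d])$), and then deduce $c > 0$, so every $i \in B$ satisfies $\delta_i(\w) = c > 0$, forcing $i \in \Type(\cS)$.

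The ``as a consequence'' statement then follows immediately: by Lemma \ref{lem:faces_Phi}, cones in $\Sigma(\Phi_{d-1})$ are exactly the $\sigma_\cS$ for $\cS \in \cO_d^\std$, and by Lemma \ref{lem:nu_cone}, cones in $\Sigma(P)$ are exactly the $\nu_B$ for $B \in \widehat{\cB}_{d-1}$, so the characterization of pairs $(\nu_B, \sigma_\cS)$ with nonempty relative-interior intersection is precisely the description of $\Omega(P)$ given in Definition \ref{defn:OmegaP}.

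There is no real obstacle here: once the two cones are reexpressed in terms of $\bdelta(\w)$, the proof reduces to elementary bookkeeping. The only subtle point, which I would take care to address, is the boundary case $\cS = ([d])$, where $\sigma_\cS$ degenerates to the origin of $W_{d-1}$ and the analysis of the relative interior is qualitatively different from the generic case; this is precisely the reason condition (1) must be stated separately from condition (2).
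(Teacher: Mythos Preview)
Your proposal is correct and follows essentially the same approach as the paper: both proofs rewrite $\sigma_\cS^\circ$ and $\nu_B^\circ$ in terms of the first difference vector $\bdelta(\w)$, handle the degenerate case $\cS=([d])$ separately, and for $\cS\neq([d])$ construct an explicit witness for sufficiency and argue necessity via the sign/ordering constraints on the $\delta_i(\w)$. The only cosmetic difference is that the paper's necessity argument proceeds via a dichotomy ($B\subseteq\Type(\cS)$ or $B\cap\Type(\cS)=\emptyset$) and rules out the latter by contradiction, whereas your argument directly shows each $i\in B$ lies in $\Type(\cS)$ by pinning down the common value $c>0$; these are equivalent one-line maneuvers.
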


\begin{proof}[Proof of Proposition \ref{prop:charintersect}]
  The cones $ \sigma_{\cS} $ and $ \nu_{B} $ are described in and Equations \eqref{eq:osp_cone_alt} and \eqref{eq:ncone_P}, respectively.
  It can be verified that the interiors of these cones are obtained by replacing $\le$ with $<$ 
  in these descriptions. One sees that $\sigma_\cS^\circ$ and $\nu_B^\circ$ can both be described using first difference vectors: 
  \begin{align}
    \sigma_\cS^\circ =& \left\{ \w \in W_{d-1} \ : \begin{array}{c} \delta_i(\w) = 0 \text{ if $i \notin \Type(\cS)$} \\
    \delta_i(\w) > 0 \text{ if $i \in \Type(\cS)$} \end{array} \right\}, \label{eq:sigma_int} \\
 \nu_B^\circ =& \left\{ \w \in W_{d-1}  \ : \ 
    \begin{array}{c} \delta_i(\w) = \delta_j(\w) \text{ if $i,j \in B$} \\
  \delta_i(\w) < \delta_j(\w) \text{ if $i \notin B$ and $j \in B$} \end{array} \right\}. \label{eq:nu_int}
\end{align}
If $\cS=([d])$, then $\sigma_\cS^\circ = \0$, and thus $\sigma_\cS^\circ \cap \nu_B^\circ \neq \emptyset$ if and only if $0 \in \nu_B^\circ$, which only happens when $B = [d-1].$ 

Suppose $\cS \neq ([d]).$ Then $\Type(\cS)$ is nonempty. If $B$ is a nonempty subset of $\Type(\cS)$, one checks $\ds \sum_{i \in B} 2\f_i^{(d)} + \sum_{i \in \Type(\cS)\setminus B} \f_i$ is in $\sigma_\cS^\circ \cap \nu_B^\circ,$ and hence $\sigma_\cS^\circ \cap \nu_B^\circ \neq \emptyset$. Conversely, assume $\sigma_\cS^\circ \cap \nu_B^\circ \neq \emptyset$ and let $\w \in \sigma_\cS^\circ \cap \nu_B^\circ,$ we need to show that $B$ is a (nonempty) subset of $\Type(\cS)$. By the description \eqref{eq:sigma_int} for $\sigma_\cS^\circ,$ we have that 
\begin{equation}\label{eq:deltainq} \delta_{i}(\w) < \delta_j(\w) \text{ for all $i \not\in \Type(\cS)$ and $j \in \Type(\cS)$}.
\end{equation}
However, by the first condition in the description \eqref{eq:nu_int} and the fact that $B \in \widehat{\cB}_{d-1}$ is not empty, we must have that $B$ is either a subset of $\Type(\cS)$ or has no intersection with $\Type(\cS).$ Suppose $B \cap \Type(\cS) = \emptyset. $ We pick $a \in \Type(\cS)$ and $b \in B$ (this can be done because both sets are nonempty). Then $a \not\in B$ and $b \not\in \Type(\cS)$. Thus, using the second condition in the description \eqref{eq:nu_int}, we conclude $\delta_a(\w) < \delta_b(\w).$ However, this contradicts \eqref{eq:deltainq}. Therefore, we conclude that $B$ is a subset of $\Type(\cS)$, completing the proof.  
\end{proof}

\begin{cor}
  Let $ P = T_{\bgamma_0}(\Delta_{d-2}) $.
  The map $(\cS, B) \mapsto (\nu_B, \sigma_\cS)$ gives a poset isomorphism from $\cZ_d$ to $\Omega(P)$. 
  Hence, the map $( (\cS, B), \sigma_\cS) \mapsto (\nu_B, \sigma_\cS)$ gives a $\Sigma(\Phi_{d-1})$-poset isomorphism from $\cZ_d^\lambda$ to $\Omega(P)$.
\end{cor}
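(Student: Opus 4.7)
The plan is to deduce the first statement from Proposition \ref{prop:charintersect} together with Lemmas \ref{lem:faces_Phi} and \ref{lem:nu_cone}, and then obtain the second statement by a direct application of Lemma \ref{lem:convertiso}. By Proposition \ref{prop:charintersect}, the assignment $(\cS, B) \mapsto (\nu_B, \sigma_\cS)$ is a well-defined bijection of underlying sets from $\cZ_d$ to $\Omega(P)$, so the only remaining task is to verify that this bijection and its inverse preserve order.

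To compare the orders, I would translate each side into containment relations on cones. Unwinding the definitions, $(\cS, B) \le (\cS', B')$ in $\cZ_d$ means, after taking duals, that $\cS'$ is a refinement of $\cS$ in $\cO_d^{\std}$ and that $B' \subseteq B$ in $\widehat{\cB}_{d-1}$. On the $\Omega(P)$ side, Lemma \ref{lem:faces_Phi} identifies $\cO_d^{\std}$ with the dual of $\cF(\Sigma(\Phi_{d-1}))$ via $\cS \mapsto \sigma_\cS$, so $\cS'$ refines $\cS$ is equivalent to $\sigma_\cS \subseteq \sigma_{\cS'}$. Similarly, Lemma \ref{lem:nu_cone} identifies $\widehat{\cB}_{d-1}$ with the dual of $\cF(\Sigma(P))$ via $B \mapsto \nu_B$, so $B' \subseteq B$ is equivalent to $\nu_B \subseteq \nu_{B'}$. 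Combining the two equivalences shows that $(\cS, B) \le (\cS', B')$ in $\cZ_d$ if and only if $\sigma_\cS \subseteq \sigma_{\cS'}$ and $\nu_B \subseteq \nu_{B'}$, which by \eqref{eq:OmegaPorder} is precisely the ordering relation $(\nu_B, \sigma_\cS) \le (\nu_{B'}, \sigma_{\cS'})$ in $\Omega(P)$. This yields the first claim.

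For the second statement, the map $(\cS, B) \mapsto (\nu_B, \sigma_\cS)$ is clearly $\lambda$-consistent, since by construction its second component is $\sigma_\cS = \lambda(\cS, B)$. Applying Lemma \ref{lem:convertiso} to the poset isomorphism just established then produces the desired $\Sigma(\Phi_{d-1})$-poset isomorphism $((\cS, B), \sigma_\cS) \mapsto (\nu_B, \sigma_\cS)$ from $\cZ_d^\lambda$ to $\Omega(P)$.

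There is no real obstacle here: all the combinatorial work has already been done in Proposition \ref{prop:charintersect} (which is where the interplay between $\Type(\cS)$ and the admissible $B$'s is extracted from the intersection of open cones), and the order-matching is a mechanical consequence of the two duality statements in Lemmas \ref{lem:faces_Phi} and \ref{lem:nu_cone}. The only subtlety worth flagging explicitly in the write-up is to keep track of the two dualizations (one when passing from $\cF_d$ to $\cT_d \supseteq \cZ_d$, and one in each of the isomorphisms for $\cO_d^{\std}$ and $\widehat{\cB}_{d-1}$), which cancel so that the bijection is indeed order-preserving rather than order-reversing.
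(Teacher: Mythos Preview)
Your proposal is correct and follows essentially the same approach as the paper's proof: both use Proposition~\ref{prop:charintersect} for the bijection, then Lemmas~\ref{lem:faces_Phi} and~\ref{lem:nu_cone} (together with the definition of the order on $\Omega(P)$) to match the partial orders, and finally invoke Lemma~\ref{lem:convertiso} for the $\Sigma(\Phi_{d-1})$-poset statement. Your explicit mention of $\lambda$-consistency and your closing remark about tracking the two dualizations are helpful clarifications that the paper's proof leaves implicit.
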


\begin{proof}
  It follows from Proposition \ref{prop:charintersect} that the map $(\cS, B) \mapsto (\nu_B, \sigma_\cS)$ is a bijection from $\cZ_d$ to $\Omega(P)$. 
  Next, note that $\cZ_d$ is a subposet of $\cT_d$ which is dual to $\cF_d.$ Therefore, for any $(\cS, B), (\cS', B') \in \cZ_d$, we have 
\[
  (\cS, B) \preceq (\cS', B') \text{ in $\cZ_d$} \quad \text{if and only if} \quad  \text{$\cS' \preceq \cS$ in $ \cO_{d} $ and $ B' \preceq B $ in $ \widehat{\cB}_{d-1}$ }.
\]
It then follows from Lemma \ref{lem:faces_Phi}, Lemma \ref{lem:nu_cone}, and \eqref{eq:OmegaPorder-copy} that
\[
(\cS, B) \preceq (\cS', B') \text{ in $\cZ_d$} \quad \text{if and only if} \quad  (\nu_B, \sigma_\cS) \le (\nu_{B'}, \sigma_{\cS'}) \text{ in $\Omega(P)$}.
\]
Therefore, the first conclusion follows. 

The second conclusion follows from the first conclusion and Lemma \ref{lem:convertiso}.
\end{proof}

\subsection*{Step III: Symmetrizing the polytope} 

Now we are in position to answer positively Problem \ref{prob:realization}:
The polytope $ \fS_{d}(P) = \conv \left\{ \pi(\bgamma_0)) + \pi(\mathbf{a}_{i}) ~:~ \pi \in \fS_{d}, i\in [d-1] \right\}$ has face poset isomorphic to $ \cF_{d} $.

\bibliography{biblio}
\bibliographystyle{plain}

\end{document}